\def\bea{\begin{eqnarray}}
\def\eea{\end{eqnarray}}
\newtheorem{theorem}{Theorem}[section]
\newtheorem{lemma}[theorem]{Lemma}
\newtheorem{remark}[theorem]{Remark}
\newtheorem{cor}[theorem]{Corollary}
\newtheorem{definition}[theorem]{Definition}
\newtheorem{example}[theorem]{Example}
\newtheorem{algorithm}[theorem]{Algorithm}
\newtheorem{assumptions}[theorem]{Assumptions}
\newcommand{\f}{f}
\newcommand{\T}{\mathcal T}
\renewcommand{\P}{\mathcal P}
\newcommand{\R}{\mathbb{R}}
\newcommand{\aquestion}[1] { }
\newcommand{\noshow}[1] {  }
\newcommand{\matlab}[1] {  }
\newcommand{\dA}{}
\newcommand{\dS}{}
\def\ud{\underline D}
\def\ol{\overline}
\def\H{\mathcal{H}}
\def\veps{\varepsilon}
\def\R{\mathbb{R}}
\begin{document}
\title{Scalar conservation laws on moving hypersurfaces}
\author{Gerhard Dziuk, Dietmar Kr\"oner, Thomas M\"uller}
\address{Abteilung f\"ur Angewandte Mathematik,
University of Freiburg,
Hermann-Herder-Stra{\ss}e 10,
D--79104 Freiburg i. Br.,
Germany}
\thanks{The work has been supported by
Deutsche Forschungsgemeinschaft via SFB/TR 71 `Geometric Partial
Differential Equations'}
\date{}
\maketitle

\newcommand{\pd}[2]{\frac{\partial #1}{\partial #2}}
\parindent0.0em

\begin{abstract}
We consider conservation laws on moving hypersurfaces.
In this work the velocity of the surface is prescribed.
But one may think of the velocity to be given by PDEs in the bulk phase.
We prove existence and uniqueness for a scalar conservation law on the moving
surface. This is done via a parabolic regularization of the hyperbolic PDE. We then prove
suitable estimates for the solution of the regularized PDE, that are independent
of the regularization parameter. We introduce the concept of an entropy solution for
a scalar conservation law on a moving hypersurface.
We also present some numerical experiments. As in the Euclidean case we expect discontinuous solutions,
in particular shocks. It turns out that in addition to the "Euclidean shocks" geometrically induced shocks may appear.
\end{abstract}

\section{Introduction}
The theoretical and numerical solution of partial differential equations on stationary
or moving surfaces has become quite important during the last decade. In many applications
PDEs in bulk phases are coupled to PDEs on interfaces between these phases. There is
a satisfactory analysis and numerical analysis for elliptic and parabolic equations on
stationary or moving surfaces. For references we refer to \cite{dziuk1}, \cite{dziuk2}, \cite{dziuk4}.
Several phenomena like shallow water equations on the earth,
relativistic flows, transport processes on surfaces, transport of oil
on the waves of the ocean or the transport on moving interfaces between
two fluid  are modeled by transport equations, and thus hyperbolic PDEs, on fixed or moving surfaces.
These equations often are highly nonlinear.

In this work we study scalar conservation laws on moving hypersurfaces without boundary in $\R^{n+1}$.
The motion of the surface is prescribed.
Assume that $\Gamma(t)$ is a family of smooth and compact hypersurfaces which moves
smoothly with time $t\in[0,T]$.
When the scalar material quantity $u=u(x,t)$, $x\in\Gamma(t)$, $t\in [0,T]$, is propagated with
the surface and simultaneously transported via a given flux $f=f((x,t),u)$ on the surface,
then its evolution with respect to prescribed initial values $u_0$ is governed by the initial value problem
\begin{equation}\label{pde}
\dot u +u \nabla_{\Gamma}\cdot v +\nabla_{\Gamma} \cdot f(\cdot,u) = 0 \,\,\,\,\,\,\, \mbox{on} \,\, G_T,\qquad
u(\cdot,0)=u_{0} \,\, \mbox{on} \,\,\Gamma_0.
\end{equation}
Here $v$ denotes the velocity of the surface $\Gamma$, and $\nabla_\Gamma$ is the surface
gradient. The dot stands for a material derivative.
$f$ is the given flux function which we assume to be tangentially divergence free on $\Gamma$
and which is a tangent vector to the surface. By $G_T$ we denote the space time surface
\begin{equation}\label{space-time}
G_T=\bigcup_{t\in (0,T)} \Gamma(t)\times \{t\}.
\end{equation}
The quantities appearing in the PDE (\ref{pde}) are well defined for $u:G_T\to\R$ and do not
depend on the ambient space.

Let us briefly summarize the published results related to this topic.  Total variation estimates for time independent Riemannian manifolds can be found in \cite{kk2}.
The existence proof of entropy solutions on time independent Riemannian manifolds is considered in \cite{ben-lef} by viscous approximation. The ideas are based on Kruzkov's and DiPerna's theories for the Euclidean case.
In a forthcoming paper Lengeler and one of the authors \cite{len-mue} are generalizing the results which we are going to prove in this contribution to the case of time dependent Riemannian manifolds. They show existence and uniqueness (in the space of measure-valued entropy
solutions) of entropy solutions for initial values in $L^\infty$ and derive total variation estimates if the initial values are in $\operatorname{BV}$.
Convergence of finite volume schemes on time independent Riemannian manifolds can be found in \cite{amo-ben-lef}.
In the paper \cite{lef-oku-nev} LeFloch, Okutmustur and Neves  prove an error estimate of the form $||u-u_h||_{L^1}\le c h^{\frac{1}{4}}$ for the scheme in \cite{amo-ben-lef}. The proof generalizes the ideas of the Euclidian case and the convergence rate is the same. This result was generalized to the time dependent case by Giesselmann \cite{gie} under the assumption that an entropy solution exists, which we are going to prove in this paper. In \cite{amo-lef-nev} an error estimate for hyperbolic conservation laws on an $(N+1)$ dimensional manifold (spacetime), whose flux is a field of differential forms of degree $N$, is shown. The matter Einstein equation for  perfect fluids on  spacetimes  in the context of general relativity is considered in \cite{lefl-stew}. A wave propagation algorithm for hyperbolic systems on curved manifolds with application in relativistic hydrodynamics and  magnetohydrodynamics have been developed and tested in \cite{lev1}, \cite{lev2}, \cite{lev5}, and finite volume scheme on 
spherical 
domains, partially with adaptive grid refinement in \cite{lev4}, \cite{lev5}.

This paper is organized as follows. In Section 2 we will summarize the notations and basic relations for moving hypersurfaces which we need to show existence for \eqref{pde}. The PDE in \eqref{pde} will be derived in Section 3. Since the weak solution of \eqref{pde} is in general not unique we will define entropy solutions in Section 4. The  idea for the existence proof is based on the approximation of the solution of \eqref{pde} by the solution of a parabolic regularization, which will be presented in Section 5. In Section 6 and 7 we prove uniform estimates in the $H^{1,1}$-norm of the solution of the parabolic regularization. This implies compactness in $L^1$ and therefore existence for \eqref{pde}, which is the main subject of Section 8. Since this existence result  depends on the special regularization, defined in Section 5, we have to prove uniqueness of \eqref{pde} in Section 9. With the numerical algorithm described in Section 10 we have performed some numerical experiments. The results are shown in 
Section 11.

\vspace{0.5cm}

\section{Notations and basic relations for moving hypersurfaces}
In this Section we present the description of the moving geometry. We use the
notion of tangential or surface gradients.
%

\begin{assumptions} \label{ass1} Let $\Gamma_t=\Gamma(t)\subset \R^{n+1}$ for $t\in [0,T]$ be a  time dependent, closed, smooth hypersurface.
The initial surface $\Gamma_0$ is transported by the smooth function
\begin{align}
 \Phi: \Gamma_0 \times [0,T] \rightarrow \R^{n+1}
 \end{align}
 with $\Phi(\Gamma_0,t)=\Gamma_t$ and $\Phi(\cdot,0)=Id$.
 We assume that $\Phi(\cdot,t):\Gamma_0\rightarrow \Gamma(t)$ is a diffeomorphism for every $t\in[0,T]$.
 The velocity of the material points is denoted by $v:=\partial_t\Phi\circ\Phi^{-1}$.
 The tangential flow of a conservative material quantity $u$ with $u(\cdot,t):\Gamma_t\rightarrow \R$ is described by
 a flux function $f=f((x,t), u)$ which is a family of vector fields such that $f((x,t), u)$ is a tangent vector
 at the surface $\Gamma_t$ for $x\in \Gamma_t$, $t\in[0,T]$ and $ u \in \R$. We assume that all derivatives of f are bounded and that  $\nabla_{\Gamma} \cdot f((\cdot,t),s)=0$ for all fixed $t\in \mathbb{R}^+,s\in \mathbb{R}$. The definition of $\nabla_{\Gamma}\cdot $ is given below.
 \end{assumptions}

\subsection{Tangential derivatives and geometry}
Let us assume that $\Gamma$ is a compact $C^2$-hypersurface in
$\R^{n+1}$ with normal vector field $\nu$.
\begin{definition}\label{TanDef}
For a differentiable function $g:\Gamma\to\R$ we define its
tangential gradient as
\begin{equation}\label{TangentialGradient}
\nabla_\Gamma g=\nabla \overline g-\nabla\overline g\cdot \nu \,\nu,
\end{equation}
where $\overline g$ is an extension of $g$ to a neighbourhood of $\Gamma$.
We denote the components of the gradient by
$$\nabla_\Gamma g=
\left(\ud_1 g,\dots,\ud_{n+1}g \right).$$
\end{definition}
The Laplace-Beltrami operator then is given by
$$\Delta_\Gamma g = \nabla_\Gamma\cdot\nabla_\Gamma g=\sum_{j=1}^{n+1}\underline D_j\underline D_j g.$$
It is well known that the tangential gradient only depends on the values of $g$ on $\Gamma$.
For more informations about this concept we refer to \cite{Deckelnick-Dziuk-Elliott-Acta}.
With the help of tangential gradients we can describe the geometric
properties of $\Gamma$.
The matrix
\begin{equation*}
\mathcal{H}=\nabla_\Gamma\nu,\quad
\mathcal{H}_{ij}=\left(\nabla_\Gamma\nu\right)_{ij}=\ud_i\nu_j
=\ud_j\nu_i\quad (i,j=1,\dots,n+1)
\end{equation*}
has a zero eigenvalue in normal direction: $\mathcal{H}\nu=0$.
The remaining eigenvalues $\kappa_1,\dots,\kappa_n$ are the
principal curvatures of $\Gamma$. We can view the matrix
$\mathcal{H}$ as an extended Weingarten map. The mean curvature $H$
of $\Gamma$ is given as the trace of $\mathcal{H}$,
\begin{equation*}
H=\sum_{j=1}^{n+1}\mathcal{H}_{jj}=\sum_{j=1}^n\kappa_j,
\end{equation*}
where we note, that this definition of the mean curvature
differs from the common definition by a factor $\frac 1n$.
Integration by parts on a hypersurface $\Gamma$ is given by
the following formula. A proof for surfaces without boundary
can be found in \cite{GT}. The extension to surfaces with
boundary is easily obtained. By $\mu$ we denote the conormal
to $\partial \Gamma$.
\begin{lemma}\label{IntPartsLemma}
\begin{equation}\label{IntParts}
\int_\Gamma \nabla_\Gamma g \,\dA=\int_\Gamma gH\nu\,\dA+
\int_{\partial \Gamma}g\mu \,\dS.
\end{equation}
\end{lemma}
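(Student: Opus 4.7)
My plan is to reduce (\ref{IntParts}) componentwise to the surface divergence theorem applied to an appropriately chosen tangent vector field. Fix $i\in\{1,\dots,n+1\}$ and let $e_i$ denote the $i$-th standard basis vector of $\R^{n+1}$. I would consider the pointwise tangential projection of $g\,e_i$,
\begin{equation*}
X_i := g\,(e_i-\nu_i\nu),
\end{equation*}
and compute its surface divergence. Using the Leibniz rule for $\nabla_\Gamma\cdot$ and the tangentiality of $\nabla_\Gamma g$,
\begin{equation*}
\nabla_\Gamma\cdot X_i \;=\; \nabla_\Gamma g\cdot(e_i-\nu_i\nu)\;+\;g\,\nabla_\Gamma\cdot(e_i-\nu_i\nu)\;=\;\ud_i g\;+\;g\,\nabla_\Gamma\cdot(e_i-\nu_i\nu).
\end{equation*}

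Next I would simplify the geometric term. Since $e_i$ is constant, $\nabla_\Gamma\cdot e_i=0$, and since the surface gradient $\nabla_\Gamma\nu_i$ is tangential and hence orthogonal to $\nu$,
\begin{equation*}
\nabla_\Gamma\cdot(\nu_i\nu)=(\nabla_\Gamma\nu_i)\cdot\nu+\nu_i\,\nabla_\Gamma\cdot\nu=\nu_i H,
\end{equation*}
where I use $\nabla_\Gamma\cdot\nu=\sum_j\ud_j\nu_j=H$ directly from the Weingarten description given in the excerpt. This yields the pointwise identity $\nabla_\Gamma\cdot X_i=\ud_i g-g H\nu_i$.

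Integrating this over $\Gamma$ and applying the tangential divergence theorem with boundary, $\int_\Gamma\nabla_\Gamma\cdot X\,\dA=\int_{\partial\Gamma}X\cdot\mu\,\dS$, I would conclude. Because $\mu$ is tangent to $\Gamma$ one has $\mu\cdot\nu=0$, so $X_i\cdot\mu=g\,e_i\cdot\mu=g\mu_i$; rearranging delivers the $i$-th component of (\ref{IntParts}).

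The only nontrivial ingredient is the tangential divergence theorem in the presence of a boundary. For closed hypersurfaces this is precisely the formula cited from \cite{GT} (the mean-curvature term drops out once $X$ is tangent). To accommodate $\partial\Gamma\neq\emptyset$ I would cover $\Gamma$ by a partition of unity: pieces supported in the interior fall under the closed case, while pieces supported in a collar neighborhood of $\partial\Gamma$ are treated in local coordinates $(y',s)$ with $s\in[0,\varepsilon)$ and $\{s=0\}$ mapping to $\partial\Gamma$, so that $\partial_s$ realizes the inward conormal $\mu$ on the boundary; the boundary integral then appears by Fubini and the one-dimensional fundamental theorem of calculus. This is the main, though still essentially routine, technical point.
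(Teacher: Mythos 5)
Your componentwise reduction is correct: the pointwise identity $\nabla_\Gamma\cdot X_i=\ud_i g-gH\nu_i$ checks out (the term $(\nabla_\Gamma\nu_i)\cdot\nu$ vanishes because tangential gradients are orthogonal to $\nu$, and $\nabla_\Gamma\cdot\nu=\sum_j\ud_j\nu_j=H$ is exactly the paper's definition of mean curvature), and integrating plus the divergence theorem for tangent fields gives the $i$-th component of (\ref{IntParts}). The paper, by contrast, offers no argument at all: it cites \cite{GT} for the boundaryless case and asserts that the extension to $\partial\Gamma\neq\emptyset$ is easy; the standard proof there works directly with a local graph representation or the distance function rather than through the tangential divergence theorem. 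The one point you should be explicit about is that your key ingredient, $\int_\Gamma\nabla_\Gamma\cdot X=\int_{\partial\Gamma}X\cdot\mu$ for tangential $X$, is logically equivalent to the lemma itself (sum the componentwise identity over $i$ with $g=X_i$ and use $X\cdot\nu=0$), so the argument is only non-circular if that divergence theorem is established independently. Your partition-of-unity and collar-coordinate sketch does exactly that and is where all the real work lives; as written it is a plausible outline rather than a proof, but it is the standard route and the remaining details (the Jacobian factors $\sqrt{\det g}$ in local coordinates and the identification of $\partial_s$ with $\mu$ up to normalization) are routine. In short: correct, self-contained where the paper is not, at the cost of having to prove the tangential divergence theorem from scratch instead of quoting \cite{GT}.
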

Higher order tangential derivatives do not commute. But we have
the following law for second derivatives. Here and in the following
we use the summation convention that we sum over doubly appearing
indices.
\begin{lemma}\label{VertauschungLemma}
For a function $g\in C^2(\Gamma)$ we have for $i,k=1,\dots,n+1$, that
\begin{equation}\label{vertauschung}
\ud_i\ud_kg=\ud_k\ud_ig+\H_{kl}\ud_lg\nu_i-\H_{il}\ud_lg\nu_k.
\end{equation}
\end{lemma}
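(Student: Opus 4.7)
The plan is to select convenient extensions of $g$ and $\nu$ to a tubular neighbourhood of $\Gamma$ and exploit that the tangential gradient is independent of the choice of extension, a fact already noted after Definition \ref{TanDef}. First I would extend $\nu$ as the gradient of the signed distance function to $\Gamma$, so that in a neighbourhood $\nu_j\partial_j\nu_m=\tfrac12\partial_m|\nu|^2=0$ and $\partial_i\nu_m=\partial_m\nu_i$; on $\Gamma$ this yields $\partial_i\nu_m=\ud_i\nu_m=\H_{im}$. Then I would extend $g$ to be constant along these normals, producing a $C^2$ extension $\bar g$ with $\nu_m\partial_m\bar g=0$ in an entire neighbourhood of $\Gamma$, and with $\partial_i\bar g=\ud_i g$ at points of $\Gamma$.

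With these choices $\ud_i g$ is naturally extended off $\Gamma$ as the function $\partial_i\bar g$, and applying the definition \eqref{TangentialGradient} once more,
\[
\ud_k\ud_i g = \partial_k\partial_i\bar g - \nu_k\,\nu_m\partial_m\partial_i\bar g.
\]
Differentiating the identity $\nu_m\partial_m\bar g=0$ with respect to $\partial_i$ gives
\[
\nu_m\partial_m\partial_i\bar g = -(\partial_i\nu_m)\,\partial_m\bar g,
\]
so substituting and evaluating on $\Gamma$, where $\partial_i\nu_m=\H_{im}$ and $\partial_m\bar g=\ud_m g$, one obtains
\[
\ud_k\ud_i g = \partial_k\partial_i\bar g + \nu_k\H_{im}\ud_m g.
\]

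Interchanging $i$ and $k$ produces the analogous identity for $\ud_i\ud_k g$; since the ambient mixed partials $\partial_k\partial_i\bar g$ are symmetric, subtracting yields
\[
\ud_i\ud_k g - \ud_k\ud_i g = \nu_i\H_{km}\ud_m g - \nu_k\H_{im}\ud_m g,
\]
which after relabelling $m\mapsto l$ is exactly \eqref{vertauschung}. The main delicate point is the construction of the extensions: the signed-distance extension is needed so that $\partial_i\nu_m=\H_{im}$ holds on $\Gamma$, and the constant-along-normals extension of $g$ is needed so that $\nu_m\partial_m\bar g$ vanishes in an open neighbourhood rather than just on $\Gamma$; otherwise differentiating it by $\partial_i$ in the computation above cannot be justified. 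Once these two geometric facts are secured, the commutation law reduces to the symmetry of ordinary Euclidean second partials together with the product rule.
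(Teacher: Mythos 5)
Your proof is correct. It rests on the same underlying mechanism as the paper's: extend $g$ off $\Gamma$ so that the commutator of tangential derivatives reduces to the symmetry of ambient mixed partials, with the curvature terms arising from derivatives of $\nu$. The paper also extends $g$ constantly in the normal direction, but then simply expands $\ud_i\ud_kg-\ud_k\ud_ig$ term by term from the definition $\ud_kg=\ol g_{x_k}-\ol g_{x_l}\nu_l\nu_k$, lets the second-derivative terms cancel under antisymmetrization, and invokes $\H\nu=0$ at the end; it never actually uses the normal-constancy of the extension. You instead exploit the specific structure of the extensions (signed-distance extension of $\nu$, so that $\nu_l\partial_l\nu_m=0$ and hence $\partial_i\nu_m=\H_{im}$ on $\Gamma$, and $\nu_m\partial_m\bar g\equiv 0$ on an open neighbourhood) to differentiate the constraint and obtain the clean intermediate identity $\ud_k\ud_ig=\partial_k\partial_i\bar g+\nu_k\H_{im}\ud_mg$, from which the lemma follows by antisymmetrization. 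Your route requires a little more care up front (justifying the two extensions, as you correctly flag), but it shortens the index bookkeeping considerably and makes transparent where each curvature term comes from; the paper's route is extension-agnostic but computationally heavier. Both are complete and valid.
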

For the convenience of the reader we give a short proof for this
relation.
\begin{proof} We use the definition (\ref{TangentialGradient})
of the tangential gradient and extend $g$ constantly in
normal direction to obtain $\overline{g}$. Then
\begin{eqnarray*}
&&\ud_i\ud_kg-\ud_k\ud_ig=
\ud_i\left(\ol g_{x_k}-\ol g_{x_l}\nu_l\nu_k\right)
-\ud_k\left(\ol g_{x_i}-\ol g_{x_m}\nu_m\nu_i\right)\\
&&=\ol g_{x_kx_i}-\ol g_{x_kx_r}\nu_r\nu_i
-\left(\ol g_{x_lx_i}-\ol g_{x_lx_s}\nu_s\nu_i\right)\nu_l\nu_k
-\ol g_{x_l}\left(\mathcal{H}_{il}\nu_k+\nu_l\mathcal{H}_{ik}\right)\\
&&-\ol g_{x_ix_k}+\ol g_{x_ix_r}\nu_r\nu_k
+\left(\ol g_{x_mx_k}-\ol g_{x_mx_s}\nu_s\nu_k\right)\nu_m\nu_i
+\ol g_{x_m}\left(\mathcal{H}_{mk}\nu_i+\nu_m\mathcal{H}_{ik}\right)\\
&&=\ol g_{x_m}\mathcal{H}_{mk}\nu_i-\ol g_{x_l}\mathcal{H}_{il}\nu_k
=\mathcal{H}_{kl}\nu_i\ud_{l} g-\mathcal{H}_{il}\nu_k\ud_l g.
\end{eqnarray*}
In the last step we have used that $\mathcal{H}\nu=0$.
\end{proof}
\subsection{Material derivatives}
In this Section we work with moving surfaces.
\begin{definition}\label{MatDerDef}
For a differentiable function $g:G_T\to\R$ we define the material derivative
\begin{equation}\label{MatDer}
\dot g=\frac{\partial g}{\partial t}+v\cdot\nabla g.
\end{equation}
\end{definition}
Note that the material derivative only depends
on the values of $g$ on the space-time surface
$G_T$.
In our proofs we will frequently use the following formula for
the commutation of spatial (tangential) derivatives and (material)
time derivatives.
A proof is given in the Appendix.
\begin{lemma}\label{dotgradLemma} For $g\in C^2(G_T)$ we have that
\begin{equation}\label{dotgrad}
(\ud_l g)\dot~=\ud_l\dot g - A_{lr}(v)\ud_rg
\end{equation}
with the matrix
$$A_{lr}(v)=\ud_lv_r-\nu_s\nu_l\ud_rv_s\quad (l,r=1,\dots,n+1).$$
\end{lemma}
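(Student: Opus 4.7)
My plan is to prove the commutation identity by expanding both sides using ambient extensions and a projector formalism, and then matching the resulting trilinear expressions in $\nu$, $\nabla v$, and $\nabla \bar g$. First I fix a smooth extension $\bar g$ of $g$ to a neighborhood of $G_T$ and a smooth extension of $\nu$ (for instance via the signed distance to $\Gamma(t)$, so $\nu$ is defined in a tube). Writing the orthogonal projector $P_{lk}=\delta_{lk}-\nu_l\nu_k$, the definition in (\ref{TangentialGradient}) gives $\underline D_l g = P_{lk}\bar g_{x_k}$, and this identity holds in a neighborhood of $G_T$, not merely on $G_T$. The material derivative of a product splits into
\begin{equation*}
(\underline D_l g)\dot{}\;=\;\dot P_{lk}\bar g_{x_k}+P_{lk}(\bar g_{x_k})\dot{}\;.
\end{equation*}

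Next I handle the second summand by the elementary commutation of $\partial_t+v_r\partial_{x_r}$ with a fixed spatial derivative: differentiating $\dot{\bar g}=\bar g_t+v_r\bar g_{x_r}$ in $x_k$ yields $(\bar g_{x_k})\dot{}\,=\partial_{x_k}\dot g-(\partial_{x_k}v_r)\bar g_{x_r}$, and applying the projector $P_{lk}$ gives exactly
\begin{equation*}
P_{lk}(\bar g_{x_k})\dot{}\;=\;\underline D_l\dot g-(\underline D_l v_r)\bar g_{x_r}\;.
\end{equation*}
For the first summand I compute $\dot P_{lk}=-\dot\nu_l\nu_k-\nu_l\dot\nu_k$ and need an explicit formula for $\dot\nu$. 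I obtain it from two constraints: differentiating $|\nu|^{2}=1$ along the flow yields $\nu\cdot\dot\nu=0$, while transporting a tangent vector $\tau$ by the flow (so $\dot\tau_i=\tau_j\partial_j v_i$) and differentiating $\nu\cdot\tau=0$ show that $\dot\nu_j+\nu_i\partial_j v_i$ is purely normal. Combining the two constraints gives
\begin{equation*}
\dot\nu_j\;=\;-\nu_i\partial_jv_i+\nu_j\,\nu_k\nu_i\partial_kv_i\;.
\end{equation*}

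I then substitute this into $\dot P_{lk}\bar g_{x_k}$, and in parallel expand the target quantity $-A_{lr}(v)\underline D_r g$ by writing $\underline D_r g=\bar g_{x_r}-\nu_r\nu_p\bar g_{x_p}$ and $A_{lr}(v)=\underline D_l v_r-\nu_s\nu_l\underline D_r v_s$. Several trilinear terms in $\nu$, $\nabla v$, $\nabla \bar g$ appear on each side; using $|\nu|^{2}=1$ to collapse the repeated normal contractions (in particular $\nu_r\nu_r=1$ makes one mixed term vanish) and writing $\eta:=\nu_k\bar g_{x_k}$ for brevity, both expressions reduce to
\begin{equation*}
\eta\,\nu_i\partial_l v_i+\nu_l\nu_i\partial_k v_i\,\bar g_{x_k}-2\,\eta\,\nu_l\nu_m\nu_i\partial_mv_i\;,
\end{equation*}
which establishes (\ref{dotgrad}). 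The calculation in step (3) is routine; the actual obstacle is step (2)—justifying and keeping straight the formula for $\dot\nu$ and then matching the bookkeeping of normal contractions in (3). No external machinery beyond the definitions (\ref{TangentialGradient}) and (\ref{MatDer}) and the identity $|\nu|^2=1$ is required, so the argument is self-contained and belongs naturally in the appendix as the authors state.
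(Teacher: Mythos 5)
Your argument is correct, and I checked that the final trilinear expression
\begin{equation*}
\eta\,\nu_i\partial_{x_l} v_i+\nu_l\nu_i\partial_{x_k} v_i\,\bar g_{x_k}-2\,\eta\,\nu_l\nu_m\nu_i\partial_{x_m}v_i ,
\qquad \eta=\nu_k\bar g_{x_k},
\end{equation*}
is indeed what both $\dot P_{lk}\bar g_{x_k}$ and $(\underline D_l v_r)\bar g_{x_r}-A_{lr}(v)\underline D_r g$ reduce to, using $\nu_r\nu_r=1$; your formula $\dot\nu_j=-\nu_i\partial_{x_j}v_i+\nu_j\nu_k\nu_i\partial_{x_k}v_i$ (equivalently $\dot\nu_j=-\nu_i\underline D_j v_i$, which shows it is extension-independent) is the standard evolution law for the normal and your derivation of it from $|\nu|^2=1$ and $\dot\tau_i=\tau_j\partial_{x_j}v_i$ is sound. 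However, your route is genuinely different from the one in the Appendix. The paper works entirely in a local parametrization $X(\theta,t)$: it writes $\underline D_l u\circ X=g^{ij}U_{\theta_j}X^l_{\theta_i}$ with the first fundamental form $g_{ij}=X_{\theta_i}\cdot X_{\theta_j}$, differentiates in $t$, and feeds in the identity $g^{ij}_t=-g^{ik}g^{jm}(V_{\theta_k}\cdot X_{\theta_m}+X_{\theta_k}\cdot V_{\theta_m})$ together with $\underline D_s x_l=P_{sl}$; the velocity-gradient terms in $A_{lr}(v)$ thus come from the time derivative of the inverse metric, and the normal $\nu$ never has to be differentiated in time. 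Your version instead splits $\underline D_l g=P_{lk}\bar g_{x_k}$ via an ambient extension and pushes the whole burden onto $\dot P=-\dot\nu\otimes\nu-\nu\otimes\dot\nu$, so the key input becomes the formula for $\dot\nu$ rather than for $g^{ij}_t$. The trade-off: you avoid introducing the metric and its inverse and the commutation step is the transparent identity $(\bar g_{x_k})\dot{}=\partial_{x_k}\dot{\bar g}-(\partial_{x_k}v_r)\bar g_{x_r}$, at the cost of needing smooth extensions of $g$, $v$ and $\nu$ off the surface (and a word that the final identity is extension-independent, which it is since both sides of the lemma are intrinsic); the paper's computation is extension-free but requires more careful bookkeeping with raised and lowered indices. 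Both proofs are elementary and self-contained, so yours is an acceptable substitute.
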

\section{Derivation of the PDE}
We derive the conservation law, which we are going to solve in
this paper. For this we need the following transport theorem on
moving surfaces. A proof can be found in \cite{dziuk1}.
\begin{lemma}\label{TransportTheorem}
\begin{equation}\label{Leibniz}
\frac{d}{dt}\int_\Gamma g \,\dA
= \int_\Gamma \dot g + g\nabla_\Gamma\cdot v\,\dA.
\end{equation}
\end{lemma}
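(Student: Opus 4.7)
The plan is to pull the integral back to the time-independent surface $\Gamma_0$ via the flow map $\Phi(\cdot,t)$, so that the time derivative can be brought inside the integral. Writing
$$\int_{\Gamma(t)} g\,\dA = \int_{\Gamma_0} g(\Phi(y,t),t)\,J(y,t)\,dA_0(y),$$
where $J(y,t)$ denotes the surface Jacobian of $\Phi(\cdot,t)$, reduces the problem to differentiating the integrand on a fixed domain. Two ingredients are needed: the time derivative of $g\circ\Phi$ and the time derivative of $J$.

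The first is immediate from the chain rule together with the definition $v=\partial_t\Phi\circ\Phi^{-1}$:
$$\partial_t\bigl[g(\Phi(y,t),t)\bigr] = \partial_t g(\Phi(y,t),t) + v(\Phi(y,t),t)\cdot\nabla g(\Phi(y,t),t) = \dot g(\Phi(y,t),t),$$
so this piece contributes $\dot g\,J$, which becomes $\int_{\Gamma(t)}\dot g\,\dA$ after pushing forward. The remaining task, and the main obstacle, is to establish the geometric identity
$$\partial_t J(y,t) = J(y,t)\,(\nabla_\Gamma\cdot v)(\Phi(y,t),t).$$

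To prove this I would localize via a parametrization $X:U\subset\R^n\to\Gamma_0$, set $\Psi=\Phi\circ X$, and work with the induced metric $g_{ij}(t)=\partial_i\Psi\cdot\partial_j\Psi$, so that locally $J=\sqrt{\det g(t)/\det g(0)}$. Jacobi's formula combined with the symmetry of $g^{ij}$ gives
$$\frac{\partial_t J}{J} = \tfrac{1}{2}g^{ij}\partial_t g_{ij} = g^{ij}\bigl(\partial_i(v\circ\Psi)\bigr)\cdot\partial_j\Psi.$$
The final step is to recognize that $g^{ij}\,\partial_i\Psi\otimes\partial_j\Psi$ is the orthogonal projection $P=I-\nu\otimes\nu$ onto the tangent space of $\Gamma(t)$, so that the right-hand side equals $\operatorname{tr}(P\,Dv) = \partial_k v_k - \nu_k\nu_l\partial_l v_k = \ud_k v_k = \nabla_\Gamma\cdot v$, matching Definition~\ref{TanDef} and in particular showing the expression is independent of the choice of parametrization.

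Combining the two contributions and pushing the integral back forward onto $\Gamma(t)$ then yields
$$\frac{d}{dt}\int_{\Gamma(t)} g\,\dA = \int_{\Gamma_0}\bigl[\dot g + g\,\nabla_\Gamma\cdot v\bigr]\,J\,dA_0 = \int_{\Gamma(t)} \dot g + g\,\nabla_\Gamma\cdot v\,\dA,$$
which is the claim. The pull-back and chain-rule steps are routine bookkeeping; the technical heart is the local computation of $\partial_t J$ and the identification of its logarithmic derivative with the intrinsic surface divergence of $v$.
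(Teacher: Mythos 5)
Your argument is correct and is essentially the standard proof: the paper itself does not prove Lemma \ref{TransportTheorem} but cites \cite{dziuk1}, where exactly this pull-back computation is carried out --- differentiating the local area element via Jacobi's formula, obtaining $\tfrac12 g^{ij}\partial_t g_{ij}$, and identifying $g^{ij}\,\partial_i\Psi\otimes\partial_j\Psi$ with the projection $P$ so that the logarithmic derivative of the Jacobian is $\nabla_\Gamma\cdot v$. This is also the same local-parametrization style the paper uses in its Appendix for Lemma \ref{dotgradLemma}, so no further comparison is needed.
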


Let $u(\cdot,t)$ be a scalar quantity, defined on $\Gamma(t)$, which is conserved.
The conservation law which we are going to solve is given in integral form by
\begin{equation}\label{conservation}
\frac{d}{dt}\int_{\gamma(t)}u\,\dA=\int_{\partial \gamma(t)} Q\cdot \mu\,\dS.
\end{equation}
Here $\gamma(t)$ is a portion of $\Gamma(t)$ which moves in time according to
the prescribed velocity $v$. $Q$ is a flux, which we will parametrize later.
Obviously normal parts of $Q$ do not enter the conservation law, because
the conormal $\mu$ on $\partial \gamma$ is a tangent vector. Thus we may
assume that $Q$ is a tangent vector to $\Gamma$. But note, that even if we choose
$Q$ as a vector with a normal part, then $Q\cdot \mu = PQ\cdot \mu$ with the
projection $P_{ij}=\delta_{ij}-\nu_i\nu_j$ ($i,j=1,\dots,n+1$).

We apply integration by parts (\ref{IntParts}) to the right hand side of (\ref{conservation}),
$$\int_{\partial \gamma(t)} Q\cdot \mu\,\dS=\int_{\gamma(t)}\nabla_\Gamma\cdot Q\,\dA
-\int_{\gamma(t)}H Q\cdot\nu\,\dA=\int_{\gamma(t)}\nabla_\Gamma\cdot Q\,\dA.$$
To the left hand side of (\ref{conservation}) we apply the transport theorem from
Lemma \ref{TransportTheorem}. This leads to
$$\frac{d}{dt}\int_{\gamma(t)}u\,\dA=\int_{\gamma(t)}\dot u + u\nabla_\Gamma\cdot v\,\dA.$$
Thus the equation (\ref{conservation}) is equivalent to
\begin{equation*}\label{cons-2}
\int_{\gamma(t)}\dot u + u\nabla_\Gamma\cdot v-\nabla_\Gamma\cdot Q\,\dA=0,
\end{equation*}
and since $\gamma$ is an arbitrary subregion of $\Gamma$, we arrive at the
PDE
\begin{equation*}\label{cons-0}
\dot u + u\nabla_\Gamma\cdot v-\nabla_\Gamma\cdot Q =0.
\end{equation*}

Throughout this paper we assume, that $Q$ has the form
\begin{equation*}\label{Fluss}
Q=-f((x,t),u)\quad (x\in\Gamma(t)),
\end{equation*}
where we assume that
\begin{equation}\label{Fluss-Tang}
f(\cdot,u)\cdot\nu=0\
\end{equation}
for all $u\in \R$ on $\Gamma$.
With this parametrization of the flux the PDE (\ref{cons-0}) reads
\begin{equation}\label{PDE}
\dot u + u\nabla_\Gamma\cdot v+\nabla_\Gamma\cdot (f(\cdot,u))=0\quad
\mbox{ on } G_T.
\end{equation}
By the divergence of $f$ we mean the `total' divergence
\begin{eqnarray*}
\label{PDE-explizit}
\nabla_\Gamma\cdot (f(\cdot,u))
&=&\sum_{j=1}^{n+1}\ud_jf_j(\cdot,u)
+\sum_{j=1}^{n+1}\frac{\partial f_j}{\partial u}(\cdot,u)\ud_ju \\
&=&\nabla_\Gamma\cdot f(\cdot,u)+f_u(\cdot,u)\cdot \nabla_\Gamma u.
\nonumber
\end{eqnarray*}
\begin{remark}
Note, that because of the condition (\ref{Fluss-Tang})
it is in general not possible to choose the flux $f$ independently of $x$ and $t$.
If we start with a flux of the form $Q=\tilde f(u)$ in the law (\ref{conservation}),
then the PDE changes to
$$\dot u + u\nabla_\Gamma\cdot v +\nabla_\Gamma\cdot P\tilde f(u)=0 $$
and we have $f((x,t),u)=P(x,t)\tilde f(u)$ in (\ref{PDE}).
\end{remark}
\section{Definition of entropy solutions}
As in the Euclidean case classical solutions of (\ref{pde})
do not exist globally in time in general. Therefore we have to introduce the notion of a weak solution.
\begin{definition}\label{def:weaksol}
A function $u\in L^\infty(G_T)$ is called a {\em weak solution} of \eqref{pde} if
\begin{equation}
\int_0^T \int_{\Gamma} u\dot{\varphi}+f(\cdot,u)\cdot \nabla_{\Gamma} \varphi\,\dA+\int_{\Gamma_0}u_0\varphi(\cdot,0)\,\dA = 0
\label{eq:weaksol}
\end{equation}
for all test functions $\varphi \in C^1(\overline{G_T})$ with $\varphi(\cdot,T)=0$.
\end{definition}
In general weak solutions are not unique. Therefore we select the entropy solution which will be introduced in
Definition \ref{entropy12}. For the motivation of the entropy condition given in (\ref{weak1}),
let us consider the following Lemma.

Here and in the following we assume that $u_{0\varepsilon} \in H^{2,1}\cap L^\infty  $ and
\begin{equation}\label{aw-reg}
\|u_{0\veps}\|_{L^\infty(\Gamma_0)}+\|\nabla_{\Gamma_0}u_{0\veps}\|_{L^1(\Gamma_0)}
+\veps\|\nabla_{\Gamma_0}^2u_{0\veps}\|_{L^1(\Gamma_0)}\leq c_0
\end{equation}
with a constant $c_0$ which is independent of the parameter $0<\veps\leq 1$.
\begin{lemma}\label{ent-con}
Let $f=(f_1,\dots,f_{n+1})$, $q=(q_1,\dots,q_{n+1})$, $\eta \in C^2(\R)$, $\eta'' \ge 0$.
Define $q_l(\cdot,s):=\int_{s_0}^s\eta'(\tau)f_{lu}(\cdot,\tau)d\tau$ for
$l=1,\dots, n+1$ and let $u_0\in L^{\infty}(\Gamma_0)$.
Assume that $u_\varepsilon$ is a smooth solution of
\begin{equation} 
{\dot u}_\varepsilon +u_\varepsilon
\nabla_{\Gamma}\cdot v +\nabla_{\Gamma} \cdot f(\cdot,u_\varepsilon)
-\varepsilon \Delta_{\Gamma}u_\varepsilon =0 \,\, \mbox{on} \,\, G_T,
\quad  u_\varepsilon(\cdot,0)=u_{0\varepsilon} \,\, \mbox{on}\,\, \Gamma_0
\label{initialvalueproblem-eps}.
\end{equation}
If $u_\varepsilon \to u $ a.e. on $G_T$ and $u_{0\varepsilon} \to u_0 $ a.e.
on $\Gamma_0$ for $\varepsilon \to 0$ and $u\in L^1(G_T)$, then $u$ satisfies
the {\em entropy condition}
\begin{equation} \label{weak1}
-\int_{\Gamma_0}
\eta(u_0)\phi(\cdot,0) +\int_0^T \int_{\Gamma_t} \big(-\eta(u)
\dot\phi(\cdot,t) -q(\cdot, u)\cdot \nabla_\Gamma\phi +
\nabla_\Gamma\cdot v(u\eta'(u)-\eta(u))\phi\big)  \le 0
\end{equation}
for all test functions $\phi\in H^1(G_T)$ with $\phi \ge 0$ and
$\phi(\cdot, T)=0$.
\end{lemma}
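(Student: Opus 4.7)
The plan is to follow the classical Kruzhkov--DiPerna vanishing-viscosity derivation, adapted to the moving hypersurface setting with the tools assembled in Sections 2 and 3. I would multiply the regularized equation (\ref{initialvalueproblem-eps}) by $\eta'(u_\varepsilon)\phi$ with $\phi\ge 0$ a test function satisfying $\phi(\cdot,T)=0$, rewrite each of the four terms so as to make the $\varepsilon\to 0$ limit visible, and then let $\varepsilon\to 0$.

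The rewrites are the following. By the chain rule the convective term $\eta'(u_\varepsilon)\dot u_\varepsilon$ is the material derivative of $\eta(u_\varepsilon)$. For the flux term, using the definition of $q$ and differentiating $\nabla_\Gamma\cdot f(\cdot,s)=0$ in $s$, one obtains $\eta'(u_\varepsilon)\nabla_\Gamma\cdot f(\cdot,u_\varepsilon)=\nabla_\Gamma\cdot q(\cdot,u_\varepsilon)$; moreover $q(\cdot,s)$ is tangent to $\Gamma$ because $f_u\cdot\nu=0$, so upon integration by parts via (\ref{IntParts}) no mean-curvature term survives on the closed surfaces $\Gamma(t)$. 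Finally the regularization contributes the standard decomposition
\begin{equation*}
-\varepsilon\eta'(u_\varepsilon)\Delta_\Gamma u_\varepsilon=-\varepsilon\nabla_\Gamma\cdot\bigl(\eta'(u_\varepsilon)\nabla_\Gamma u_\varepsilon\bigr)+\varepsilon\eta''(u_\varepsilon)|\nabla_\Gamma u_\varepsilon|^2.
\end{equation*}

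Integrating over $G_T$ and applying the transport theorem (Lemma \ref{TransportTheorem}) to the material-derivative term produces both the initial datum $-\int_{\Gamma_0}\eta(u_{0\varepsilon})\phi(\cdot,0)$ (using $\phi(\cdot,T)=0$) and an extra $-\eta(u_\varepsilon)\nabla_\Gamma\cdot v\,\phi$, which combines with the PDE's $u_\varepsilon\eta'(u_\varepsilon)\nabla_\Gamma\cdot v\,\phi$ to give exactly the $(u\eta'(u)-\eta(u))\nabla_\Gamma\cdot v$ coefficient appearing in (\ref{weak1}). Integration by parts on the flux term moves $\nabla_\Gamma$ onto $\phi$; convexity $\eta''\ge 0$ together with $\phi\ge 0$ lets me discard the dissipation, yielding the desired inequality at each fixed $\varepsilon>0$ modulo a single residual $\varepsilon\int_0^T\!\!\int_{\Gamma_t}\eta'(u_\varepsilon)\nabla_\Gamma u_\varepsilon\cdot\nabla_\Gamma\phi$.

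The main obstacle is the passage to the limit. For the $\varepsilon$-independent pieces, the a.e.\ convergences $u_\varepsilon\to u$, $u_{0\varepsilon}\to u_0$, together with the uniform $L^\infty$ bound implicit in (\ref{aw-reg}) and the maximum principle for the parabolic problem, let me apply dominated convergence. For the residual viscous term I would invoke the basic energy estimate obtained by testing (\ref{initialvalueproblem-eps}) against $u_\varepsilon$ itself, which yields a uniform bound on $\sqrt{\varepsilon}\,\nabla_\Gamma u_\varepsilon$ in $L^2(G_T)$; Cauchy--Schwarz then gives $\varepsilon\bigl|\int\eta'(u_\varepsilon)\nabla_\Gamma u_\varepsilon\cdot\nabla_\Gamma\phi\bigr|\le C\sqrt{\varepsilon}\,\|\nabla_\Gamma\phi\|_{L^2}\to 0$, where $\phi\in H^1$ is used. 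This energy estimate is the only ingredient not immediate from the statement itself; it belongs to the uniform regularity estimates announced for Sections 6--7.
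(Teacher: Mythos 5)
Your proposal is correct and follows the same vanishing-viscosity strategy as the paper: multiply by $\eta'(u_\varepsilon)\phi$, use the transport theorem to generate the initial-data term and the $\nabla_\Gamma\cdot v$ contribution, use the tangential divergence-free property of $f$ and the tangency $f_u\cdot\nu=0$ to convert the flux term into $-q(\cdot,u_\varepsilon)\cdot\nabla_\Gamma\phi$ with no mean-curvature boundary term, and drop the nonnegative dissipation $\varepsilon\eta''(u_\varepsilon)|\nabla_\Gamma u_\varepsilon|^2\phi$.

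The one place where you genuinely diverge is the residual viscous term. The paper integrates by parts twice, producing $\varepsilon\int\eta(u_\varepsilon)\Delta_\Gamma\phi$, which vanishes as $\varepsilon\to 0$ for free from the uniform $L^\infty$ bound on $u_\varepsilon$ -- no energy estimate needed, but at the price of requiring $C^2$ test functions and an implicit density argument to reach the $H^1$ class in which the lemma is stated. You integrate by parts only once, keep $\varepsilon\int\eta'(u_\varepsilon)\nabla_\Gamma u_\varepsilon\cdot\nabla_\Gamma\phi$, and kill it via the standard energy estimate $\varepsilon\|\nabla_\Gamma u_\varepsilon\|^2_{L^2(G_T)}\le C$ obtained by testing the regularized equation with $u_\varepsilon$; this works directly for $\phi\in H^1(G_T)$ but imports one estimate that is not proved in the paper (though it is routine: the transport theorem handles $\dot u_\varepsilon u_\varepsilon$, the flux term reduces to a total tangential divergence plus bounded remainders thanks to $\nabla_\Gamma\cdot f(\cdot,s)=0$ and the $L^\infty$ bound, and Gronwall closes the argument). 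Both routes rely, as you note explicitly, on the uniform $L^\infty$ bound from the maximum principle to justify dominated convergence in the $\varepsilon$-independent terms -- a hypothesis the lemma statement leaves tacit but which the paper uses in exactly the same way.
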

\begin{proof} Let $\eta$ and $q$ be defined as above.
We multiply (\ref{pde}) by $\eta'(u_\varepsilon)$ and
obtain
\begin{equation}
\dot u_\veps \eta'(u_\veps) +u_\veps \nabla_{\Gamma}\cdot
v \eta'(u_\veps) +\nabla_{\Gamma} \cdot f(\cdot,u_\veps) \eta'(u_\veps) -\varepsilon
\Delta_{\Gamma}u_\veps
\eta'(u_\veps) =0 \,\,\, \mbox{on} \,\, G_T\label{pde1}.
\end{equation}
This implies
\begin{equation}
\dot{\eta(u_\veps)} +u_\veps \nabla_{\Gamma}\cdot v
\eta'(u_\veps) + f_l'(\cdot,u_\veps)D_lu_\veps \eta'(u_\veps) -\varepsilon \Delta_{\Gamma} \eta(u_\veps)
+ \varepsilon \eta''(u_\veps)(D_lu_\veps)^2=0 \,\,\,\,\,\,\, \mbox{on} \,\, G_T\nonumber\label{pde2}.
\end{equation}
We multiply by a smooth test function $\phi$ such that
$\phi\ge 0, \, \phi(\cdot,T) =0$ and integrate. This gives
\begin{equation}
\int_0^T\int_{\Gamma_t}\dot{ \eta(u_\veps)} \phi + u_\veps \nabla_\Gamma \cdot v
\eta '(u_\veps)\phi +f_l'D_lu\eta'(u_\veps)\phi -\varepsilon \Delta_\Gamma \eta(u_\veps) \phi
+ \varepsilon \eta''(u_\veps)
(D_lu_\veps)^2 \phi =0.\label{etapunkt}
\end{equation}
Since
\bea
R&:=& -\int_{\Gamma_0} \eta (u_\veps(\cdot,0))\phi(\cdot,0)=
\int_0^T \frac{d}{dt}\int_{\Gamma_t} \eta
(u_\veps(\cdot,t))\phi(\cdot,t)\nonumber\\
&=& \int_0^T \int_{\Gamma_t} \big(\eta(u_\veps(\cdot,t))\phi(\cdot,t)\big)^{\cdot}
+ \int_0^T \int_{\Gamma_t}\eta(u_\veps)\phi \nabla_\Gamma  \cdot v \nonumber\\
&=& \int_0^T \int_{\Gamma_t} \big(\eta(u_\veps(\cdot,t))^{\cdot}\phi(\cdot,t)
+ \eta(u_\veps(\cdot,t))\dot{\phi}(\cdot,t)\big)
+ \int_0^T \int_{\Gamma_t}\eta(u_\veps)\phi \nabla_\Gamma  \cdot v \nonumber,
\nonumber
\eea
 we obtain from (\ref{etapunkt}):
\begin{eqnarray*}
&&R-\int_0^T \int_{\Gamma_t} \eta(u_\veps(\cdot,t))\dot\phi(\cdot,t)\\
&&- \int_0^T \int_{\Gamma_t}\big(\eta(u_\veps)\phi \nabla_\Gamma \cdot v
-u_\veps\nabla_\Gamma \cdot v\eta'(u_\veps) \phi
 -f_{lu}D_lu_\veps\eta'(u_\veps)\phi +\varepsilon \eta(u_\veps) \Delta_\Gamma \phi \big) \nonumber\\
&=&R-  \int_0^T \int_{\Gamma_t} \eta (u_\veps(\cdot,t))\dot\phi(\cdot,t)\nonumber\\
&&- \int_0^T \int_{\Gamma_t}\big(\eta(u_\veps)\phi \nabla_\Gamma  \cdot v -u_\veps\nabla_\Gamma  \cdot v\eta'(u_\veps) \phi
+q(u_\veps)\cdot \nabla_{\Gamma} \phi +\varepsilon \eta(u_\veps) \Delta_\Gamma \phi \big)
\le 0 . \label{tangentf}
\end{eqnarray*}
This means that we have the inequality
\bea
&&-\int_{\Gamma_0} \eta (u_{0\varepsilon})\phi(\cdot,0)
-\int_0^T \int_{\Gamma_t} \eta (u_\varepsilon(\cdot,s))\dot\phi(\cdot,s)\\
&&\qquad - \int_0^T \int_{\Gamma_t}\big(\eta(u_\varepsilon)\phi \nabla_\Gamma\cdot v
 -u_\varepsilon\nabla_\Gamma  \cdot v\eta'(u_\varepsilon) \phi
 +q(u_\varepsilon) \cdot \nabla_{\Gamma} \phi +\varepsilon \eta(u_\varepsilon)
 \Delta_\Gamma \phi \big) \le 0, \label{tangentf1}\nonumber
\eea
and for $\varepsilon \to 0 $ we obtain in the limit
 \bea &&-\int_{\Gamma_0} \eta (u_{0})\phi(\cdot,0)
 - \int_0^T \int_{\Gamma_t} \eta (u(\cdot,s))\dot\phi(\cdot,s)\\
 &&\qquad - \int_0^T \int_{\Gamma_t}\big(\eta(u)\phi \nabla_\Gamma \cdot  v
 -u \nabla_\Gamma  \cdot v\eta'(u) \phi
 +q(u)\cdot \nabla_{\Gamma} \phi  \big) \le 0. \label{tangentf2}
 \nonumber
 \eea
This finally proves the Lemma.
\end{proof}

Now we use the property (\ref{weak1}) for the definition of an entropy solution.
\begin{definition} \label{entropy12}
Let $\eta, q_l$ and $u_0$ be as in Lemma \ref{ent-con}. Then $u \in L^{\infty}(G_T)$
is an {\em entropy solution} (admissible weak solution)
of (\ref{pde}) if
\begin{equation}
-\int_{\Gamma_0} \eta(u_0)\phi(\cdot,0)
+\int_0^T \int_{\Gamma_t} \big(-\eta(u)
\dot\phi(\cdot,t) -q(\cdot, u) \cdot \nabla_\Gamma\phi + \phi
\nabla_\Gamma\cdot v(u\eta'(u)-\eta(u))\big)  \le 0
\label{entropy11}
\end{equation}
holds for all test functions $\phi\in H^1(G_T)$ with $\phi \ge 0$ and
$\phi(\cdot,T)=0$
and for all $\eta$ and $q$ with the properties,
mentioned above.
\end{definition}
\begin{remark} If we choose $\eta(u)=u$ in Definition \ref{entropy12}, then this
implies $q(\cdot,u)=f(\cdot,u)+const$, $u\eta'(u)-\eta(u)=0$
and that $u$ is a weak solution of (\ref{pde}).
\end{remark}

The following definition of Kruzkov entropy solutions is equivalent to Definition \ref{entropy12}.
\begin{definition}\label{def:kruzkoventr}
A function $u \in L^\infty(G_T)$ is called
{\em Kruzkov entropy solution} of \eqref{pde} if
\begin{eqnarray}
\nonumber
\int_0^T\int_{\Gamma} |u-k|\dot{\varphi}-\mbox{sign}(u-k)\,k\,\nabla_{\Gamma}\cdot v\,\varphi
+\mbox{sign}(u-k) (f(\cdot,u)-f(\cdot,k))\nabla_{\Gamma}\varphi \\
+\int_{\Gamma_0} |u_0-k| \varphi(\cdot,0) \geq 0
\label{eq:krukoventr}
\end{eqnarray}
for all $k\in\R$ and
all test functions $\varphi \in C^1(\overline{G_T})$ with $\varphi \geq 0$ and $\varphi(\cdot,T)=0$.
\end{definition}

\begin{remark} \label{remark-entropy}
An entropy solution is a Kruzkov entropy solution. This can be seen by a regularization of the Kruzkov entropy - entropy flux pair.
See for example \cite{Kro97}.
\end{remark}

\section{The regularized problem}
In order to solve the conservation law (\ref{pde})
we solve the initial value problem (\ref{initialvalueproblem-eps})
and  consider $u_\varepsilon$ for $\varepsilon \to 0$.
For technical reasons let us consider the following regularized PDE
\begin{equation}\label{regpde}
\dot u_\varepsilon + u_\varepsilon\nabla_\Gamma\cdot v +\nabla_\Gamma\cdot f(\cdot,u_\varepsilon)
-\veps \nabla_\Gamma \cdot \left(B \nabla_\Gamma u_\varepsilon\right)=0
\end{equation}
on ${G}_T$
with initial data $u_\varepsilon(\cdot,0)=u_{0\varepsilon}$ on $\Gamma_0$
with $u_{0\varepsilon} \to u_0$ a.e. on $\Gamma_0$ and (\ref{aw-reg}).
Here $B=B(x,t)$ is a symmetric diffusion matrix which maps
the tangent space of $\Gamma(t)$ into the tangent space at the point
$x\in\Gamma(t)$, so that we have
$B\nu=0$ and $\nu^* B=0$. Assume also that $B$ is positive definite on the
tangent space. Similarly as in Lemma \ref{ent-con} it can be shown
that $u$ is an entropy solution if $u_\varepsilon \to u$ for $\veps\to 0$.
In the proofs of the following Section we will use the
fact that
\begin{equation}\label{BP}
BP=PB=B.
\end{equation}

The main purpose of
the next Section is to prove a priori bounds for $u_\varepsilon$ which are
independent of $\veps$.

\section{A Priori estimates for the regularized problem}
In this Section we replace $u_\veps$ by $u$ for
better readability. The aim of this Section is the derivation of a priori
estimates which are independent of $\veps$.
The initital value problem
\begin{equation}\label{reg-pde}
\dot u + u\nabla_\Gamma\cdot v +\nabla_\Gamma\cdot f(\cdot,u)
-\veps \nabla_\Gamma \cdot \left(B \nabla_\Gamma u\right)=0,
\quad u(\cdot,0)=u_{0\veps}
\end{equation}
has a unique smooth solution. This is shown by dovetailing the
cut-off technique of Kruzkov with the Galerkin ansatz from \cite{dziuk1}.
The proof is quite straight forward and so we omit the details here.
The proof of smoothness of the weak solution found by this method is a purely
local argument.

\subsection{Estimate of the solution}
We prove that the solution $u$ of the regularized
parabolic initital value problem
(\ref{reg-pde})
is bounded in the $L^\infty$-norm in space
and time independently of the parameter $\veps$.
\begin{lemma}\label{funk-absch-Lemma}
Let $u$ be the solution of (\ref{reg-pde}). Then
\begin{equation}\label{funk-absch}
\sup_{t\in (0,T)}\|u(\cdot,t)\|_{L^\infty(\Gamma(t))}\leq c
\end{equation}
with a constant $c$ which is independent of $\veps$ but depends on
the data of the problem including the final time $T$ and $c_0$ from (\ref{aw-reg}).
\end{lemma}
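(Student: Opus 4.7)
My strategy is an $L^p$-energy estimate uniform in $p$, then passage to $p\to\infty$. The key observation is that the structural hypothesis $\nabla_\Gamma\cdot f((\cdot,t),s)=0$ is inherited by $f_u$: differentiating in $s$ gives $\nabla_\Gamma\cdot f_u((\cdot,t),s)=0$ for every $s\in\R$, and the tangentiality of $f$ forces $f_u$ to be tangent as well (differentiate $f\cdot\nu=0$ in $s$).

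For an even integer $p\ge 2$, I multiply (\ref{reg-pde}) by $u|u|^{p-2}$, integrate over $\Gamma(t)$, and invoke the transport theorem of Lemma~\ref{TransportTheorem}. The diffusion term becomes, after integration by parts on the closed surface (no boundary, and $B\nabla_\Gamma u$ is tangent so the curvature contribution in (\ref{IntParts}) vanishes),
\[
-\veps\int_{\Gamma(t)} u|u|^{p-2}\nabla_\Gamma\cdot(B\nabla_\Gamma u)\,\dA
= \veps(p-1)\int_{\Gamma(t)}|u|^{p-2}\nabla_\Gamma u\cdot B\nabla_\Gamma u\,\dA \ge 0,
\]
which I discard. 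For the nonlinear flux, I introduce the tangent-vector antiderivative
\[
F(x,t,s):=\int_0^s \sigma|\sigma|^{p-2}\,f_u(x,t,\sigma)\,d\sigma,
\]
and use the chain-rule identity $\nabla_\Gamma\cdot F(\cdot,u)=(\nabla_\Gamma\cdot F)(\cdot,u)+u|u|^{p-2}f_u(\cdot,u)\cdot\nabla_\Gamma u$. Integrated over $\Gamma(t)$, the left-hand side vanishes (closed surface, tangent field), and the partial-divergence piece $(\nabla_\Gamma\cdot F)(\cdot,u)=\int_0^u\sigma|\sigma|^{p-2}\nabla_\Gamma\cdot f_u(\cdot,\sigma)\,d\sigma$ vanishes by the differentiated hypothesis. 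Thus the nonlinear flux term drops out entirely.

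What remains reads
\[
\frac{d}{dt}\int_{\Gamma(t)}|u|^p\,\dA\;\le\;(p-1)\,\|\nabla_\Gamma\cdot v\|_{L^\infty(G_T)}\int_{\Gamma(t)}|u|^p\,\dA,
\]
so Gronwall yields $\|u(\cdot,t)\|_{L^p(\Gamma(t))}\le \|u_{0\veps}\|_{L^p(\Gamma_0)}\,e^{(1-1/p)\|\nabla_\Gamma\cdot v\|_{L^\infty}t}$, with constants independent of $p$ and $\veps$. Invoking $\|u_{0\veps}\|_{L^\infty(\Gamma_0)}\le c_0$ from (\ref{aw-reg}) and the finite area of each $\Gamma(t)$, I let $p\to\infty$ to obtain (\ref{funk-absch}). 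The only non-routine point, and hence the main obstacle, is the vanishing of the flux term; it rests on the structural fact that the divergence-free constraint on $f$ transfers to $f_u$ and on the closed-surface divergence theorem for tangent fields. If one prefers to sidestep the $L^p$ bookkeeping, a direct maximum-principle argument is available: after pulling back to $\Gamma_0$ via $\Phi$, apply Danskin's theorem to $M(t):=\max u(\cdot,t)$; at a spatial maximum $\nabla_\Gamma u=0$ kills the transport, $\nabla_\Gamma\cdot(B\nabla_\Gamma u)\le 0$ kills the diffusion with the correct sign, giving $M'(t)\le\|\nabla_\Gamma\cdot v\|_{L^\infty}M(t)$, and the analogous lower bound for $\min u$ closes the estimate.
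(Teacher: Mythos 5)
Your argument is correct, but it takes a genuinely different route from the paper. The paper rescales $u$ by $e^{-\lambda t}$ with $\lambda=\sup\|(\nabla_\Gamma\cdot v)_-\|_{L^\infty}$, cuts off the flux, and tests the weak form with the Stampacchia truncation $(w-M)_+$, $M=c_0$; the flux difference $g(\cdot,w)-g(\cdot,M)$ is then absorbed into the $\veps$-diffusion via Young's inequality, producing a Gronwall constant $c(M_\veps,\veps)$ that is harmless only because the quantity $\int_\Gamma(w-M)_+^2$ starts at zero. You instead run a $p$-uniform $L^p$ energy estimate and let $p\to\infty$. The decisive step in your version --- writing $u|u|^{p-2}f_u(\cdot,u)\cdot\nabla_\Gamma u$ as the total tangential divergence of the tangent field $F(\cdot,u)=\int_0^u\sigma|\sigma|^{p-2}f_u(\cdot,\sigma)\,d\sigma$ and killing it with the closed-surface divergence theorem plus $\nabla_\Gamma\cdot f_u=\partial_u(\nabla_\Gamma\cdot f)=0$ --- is exactly the entropy-flux mechanism the paper itself uses later (e.g.\ in Lemma \ref{gradient-absch}, where $\int_\Gamma\nabla_\Gamma\cdot(f_u(\cdot,u)|w|)=\int_\Gamma H\nu\cdot f_u(\cdot,u)|w|=0$), so all the structural hypotheses you invoke ($f\cdot\nu=0$, tangential divergence-freeness, $B$ positive on the tangent space, $\nu^*B=0$, compactness of $\Gamma(t)$) are available. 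What your route buys: the flux term vanishes identically rather than being absorbed, so no cutoff $\psi$, no a priori quantity $M_\veps=\|u\|_{L^\infty(G_T)}$, and no $\veps$-dependent Gronwall constant ever appears; the price is the $L^p$ bookkeeping and the (standard, finite-measure) passage $\|u\|_{L^p}\to\|u\|_{L^\infty}$, for which you correctly note that $\sup_t|\Gamma(t)|<\infty$. Two small points you should make explicit if you write this up: the interchange $\nabla_\Gamma\cdot f_u=\partial_u(\nabla_\Gamma\cdot f)$ uses the smoothness of $f$ from Assumptions \ref{ass1}, and the concluding Danskin/maximum-principle sketch would need care with the non-differentiability of $t\mapsto\max u(\cdot,t)$ (Dini derivatives), so it is best left as the aside you intend it to be.
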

\begin{proof}
This estimate is a consequence of
the maximum principle for parabolic PDEs. Because of the
unusual setting here, we give a proof. The PDE (\ref{regpde}) can
be written in a weak form. Note that the nonlinearity $f((x,t),u)$ is
tangentially divergence free with respect to the $x$-variable.
This is crucial here.
We begin by transforming $u$:
$$w(x,t)=e^{-\lambda t}u(x,t),\quad x\in\Gamma(t),$$
where we set $\lambda=\sup_{(0,T)}\|\left(\nabla_\Gamma\cdot v\right)_{-}\|_{L^\infty(\Gamma)}$
with $\left(\nabla_\Gamma\cdot v\right)_-=\min\{\nabla_\Gamma\cdot v,0\}$.
We set
$g(\cdot,w)=e^{-\lambda t}f(\cdot,e^{\lambda t}w)\psi(w)$
with a function $\psi\in C^{1}_0(\R)$ which
satisfies $\psi(w)=1$ for $|w|\le M_\veps$ and $\psi(w)=0$ for $|w|\ge M_\veps+1$.
We use $M_\veps= \|u\|_{L^\infty(G_T)}$.  Then
\begin{equation}\label{g-reg}
\left|\frac{\partial g}{\partial w}(\cdot,w)\right| \leq c(M_\veps).
\end{equation}
Because of $|w|=e^{-\lambda t}|u|\leq M_\veps$ we then have
\begin{eqnarray}\label{regpdeweak-w}
\int_\Gamma \dot w\varphi+\int_\Gamma w\varphi\left(\lambda +\nabla_\Gamma\cdot v\right)
-\int_\Gamma g(\cdot,w)\cdot\nabla_\Gamma\varphi
+\varepsilon \int_\Gamma B\nabla_\Gamma w\cdot \nabla_\Gamma\varphi = 0
\end{eqnarray}
for every $\varphi$. If we choose $\varphi=(w-M)_+=\max\{w-M,0\}$
with $M=c_0\geq \|u_{0\veps}\|_{L^\infty(\Gamma_0)}$ in this equation, then we arrive at
\begin{eqnarray*}
\lefteqn{\frac 12 \int_\Gamma \left((w-M)_+^2\right){\dot~}
+\varepsilon \int_\Gamma B\nabla_\Gamma (w-M)_+\cdot\nabla_\Gamma (w-M)_+}\\
&&=\int_\Gamma (g(\cdot,w)-g(\cdot,M))\cdot \nabla_\Gamma(w-M)_+
-\int_\Gamma w(w-M)_+(\lambda +\nabla_\Gamma\cdot v).
\end{eqnarray*}
Here we have used that $g(\cdot,M)\cdot\nu=0$.
For the right hand side of this equation we observe that
for our choice of $\lambda$
$$w(w-M)_+(\lambda +\nabla_\Gamma\cdot v)\ge 0,$$
and that
$$|g(\cdot,w)-g(\cdot,M)|\le c(M_\veps) |w-M|.$$
We use these two estimates together with the ellipticity condition and get
with a positive constant $c$ the estimate
\begin{eqnarray*}
\lefteqn{
\frac 12 \frac{d}{dt}\int_\Gamma (w-M)_+^2
+\varepsilon c \int_\Gamma |\nabla_\Gamma (w-M)_+|^2 }\\
&&\le \frac 12 \|(\nabla_\Gamma \cdot v)_+\|_{L^\infty(\Gamma)}\int_\Gamma (w-M)^2_+
+c(M_\veps)\int_\Gamma (w-M)_+|\nabla_\Gamma (w-M)_+|.
\end{eqnarray*}
Here we also have used the transport theorem from Lemma \ref{TransportTheorem}.
From the previous estimate we infer with Young's inequality that
\begin{equation*}
\frac{d}{dt} \int_\Gamma (w-M)_+^2 +\varepsilon c\int_\Gamma |\nabla_\Gamma (w-M)_+|^2
\leq c(M_\veps,\varepsilon)\int_\Gamma (w-M)_+^2.
\end{equation*}
This implies for the nonnegative function $\phi(t)=\int_{\Gamma(t)} (w(\cdot,t)-M)_+^2$ the
inequality
$$\phi^\prime(t)\le c(M_\veps,\varepsilon)\phi(t).$$
Because of $\phi(0)=0$ we then obtain with a Gronwall argument that $\phi(t)=0$. But
this says that $(w-M)_+=0$ almost everywhere which implies
$w\le M$ or
$$u(\cdot,t)\le c(T,M) \mbox{ on } \Gamma(t),$$
and the constant $c(T,M)$ does not depend on $\veps$.
The estimate from below follows similarly.
\end{proof}
\subsection{Estimate of the spatial gradient}
\begin{lemma}\label{gradient-absch}
Assume that $u$ solves the regularized PDE (\ref{reg-pde}). Then
\begin{equation}
\sup_{(0,T)}\int_\Gamma |\nabla_\Gamma u|\leq c
\end{equation}
with a constant $c$ which does not depend on $\veps$.
\end{lemma}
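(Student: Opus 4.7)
The plan is to derive an evolution (in)equality for the $L^1$-norm of $\nabla_\Gamma u$ by differentiating the regularized equation \eqref{reg-pde} tangentially and testing against a regularized sign of the gradient. Because the surface is moving and curved, I will have to keep track of three families of commutator terms: (i) the time commutator from Lemma \ref{dotgradLemma} involving $A(v)$, (ii) the spatial commutator from Lemma \ref{VertauschungLemma} involving $\mathcal{H}$, and (iii) the projection correction that arises when we exchange $\nabla_\Gamma$ with $\nabla_\Gamma\cdot$. Everywhere $u$ is already bounded by Lemma \ref{funk-absch-Lemma}, and all geometric quantities ($v,\nabla v,\mathcal{H},f,f_u,B,\nabla B$) are bounded by Assumption \ref{ass1}; these $L^\infty$-bounds will be the only ingredients (besides the regularization $u_{0\veps}$ in \eqref{aw-reg}) needed for the Gronwall step.

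\textbf{Step 1: PDE for the gradient.} Write $\phi_l:=\ud_l u$ and $|\phi|_\delta:=\sqrt{|\phi|^2+\delta^2}$ for $\delta>0$. Apply $\ud_l$ to \eqref{reg-pde}. Using Lemma \ref{dotgradLemma} on the material time derivative, and using Lemma \ref{VertauschungLemma} to interchange $\ud_l$ with $\nabla_\Gamma\cdot f(\cdot,u)$ and with $\nabla_\Gamma\cdot(B\nabla_\Gamma u)$, I obtain a system of the form
\begin{equation*}
\dot\phi_l + f_u(\cdot,u)\cdot\nabla_\Gamma\phi_l + (\nabla_\Gamma\cdot v)\phi_l
- \veps\,\ud_r(B_{rs}\ud_s\phi_l) = G_l(x,t,u,\phi,\mathcal{H},\nabla v,\nabla f,\nabla B),
\end{equation*}
where the right-hand side $G_l$ collects all commutator and lower-order contributions (terms containing $A_{lr}(v)\phi_r$, $\mathcal{H}_{ij}\phi_j$, $\ud_l(u\nabla_\Gamma\cdot v)$, tangential derivatives of $f$, and the diffusion commutator $\veps\,\mathcal{H}\cdot B\phi$) and satisfies the pointwise bound $|G_l|\le c(|\phi|+1)$ uniformly in $\veps$ since all geometric coefficients are bounded and $u$ is already in $L^\infty$.

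\textbf{Step 2: Testing and diffusion.} Multiply the equation in Step 1 by $\phi_l/|\phi|_\delta$ and sum over $l$. The left-hand side contributions rearrange, using the identity $(|\phi|_\delta)^{\cdot}=\phi_l\dot\phi_l/|\phi|_\delta$, into
\begin{equation*}
(|\phi|_\delta)^{\cdot} + f_u(\cdot,u)\cdot\nabla_\Gamma|\phi|_\delta + (\nabla_\Gamma\cdot v)\,|\phi|_\delta^{-1}|\phi|^2
-\veps\,|\phi|_\delta^{-1}\phi_l\,\ud_r(B_{rs}\ud_s\phi_l).
\end{equation*}
Integrate over $\Gamma(t)$ and apply the transport theorem \eqref{Leibniz}: the flux term integrates by parts (using that $f(\cdot,u)\cdot\nu=0$ in the chain-rule expansion of $\nabla_\Gamma\cdot f_u$) to yield $\int_\Gamma |\phi|_\delta\,\nabla_\Gamma\cdot f_u(\cdot,u)$, which is $L^\infty$ in the coefficient, hence bounded by $c\int_\Gamma|\phi|_\delta$. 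For the diffusion, integrate by parts once more to produce
\begin{equation*}
\veps\int_\Gamma B_{rs}(\ud_s\phi_l)\,\ud_r\!\left(\phi_l/|\phi|_\delta\right)
= \veps\int_\Gamma |\phi|_\delta^{-1} B_{rs}\bigl(\ud_s\phi_l\,\ud_r\phi_l - |\phi|_\delta^{-2}\phi_l\phi_k(\ud_s\phi_l)(\ud_r\phi_k)\bigr),
\end{equation*}
which is non-negative because $B$ is positive semi-definite on the tangent space and the quadratic form is non-negative (this is the classical convexity of $|\cdot|_\delta$). Thus the diffusion gives a favorable sign and can be discarded.

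\textbf{Step 3: Gronwall and $\delta\to 0$.} Combining Steps 1--2 with the pointwise bound $|G|\le c(|\phi|+1)$ and with \eqref{Leibniz}, I arrive at
\begin{equation*}
\frac{d}{dt}\int_{\Gamma(t)}|\phi|_\delta \le c\int_{\Gamma(t)}|\phi|_\delta + c|\Gamma(t)|,
\end{equation*}
with $c$ depending only on the data of the problem and not on $\veps$ or $\delta$. The hypothesis \eqref{aw-reg} gives $\int_{\Gamma_0}|\phi(\cdot,0)|_\delta \le \int_{\Gamma_0}|\nabla_{\Gamma_0}u_{0\veps}|+\delta|\Gamma_0|\le c_0+\delta|\Gamma_0|$. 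Gronwall then yields a bound uniform in $\veps$ and $\delta$; sending $\delta\to 0$ proves the claim.

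\textbf{Main obstacle.} The delicate point is Step 2, where one must verify that every geometric correction produced by commuting $\ud_l$ past the two differential operators in \eqref{reg-pde} either carries the favourable sign (the diffusion) or is of zeroth order in $\phi$ with bounded coefficients. In particular, the commutator $\veps\,\mathcal{H}_{kl}\ud_l u$ generated from the diffusion must be shown to be merely an $O(\veps)\cdot O(|\phi|)$ correction that is absorbed into the right-hand side of the Gronwall inequality, rather than an uncontrolled $O(\veps)\cdot O(|\nabla_\Gamma\phi|)$ term. This requires careful bookkeeping of all uses of Lemma \ref{VertauschungLemma} and of $\mathcal{H}\nu=0$.
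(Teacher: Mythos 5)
There is a genuine gap, and it sits exactly where you flagged your ``main obstacle'' --- but the dangerous term is not the one you identified. When you apply $\ud_i$ to the diffusion term $\nabla_\Gamma\cdot(B\nabla_\Gamma u)=\ud_k(B_{km}\ud_m u)$ and commute derivatives, you obtain (besides the good term $\ud_k(B_{km}\ud_m w_i)$ and genuinely zeroth-order curvature corrections such as $\veps\H_{il}\H_{lk}B_{km}w_m$, and besides terms carrying a factor $\nu_i$ that die against $w_i/|w|$ by tangency) the term $\veps\,\ud_k(\ud_iB_{km}w_m)$ coming from the spatial variation of $B$. This is a divergence-form \emph{first-order} term in $w=\nabla_\Gamma u$: it is not pointwise bounded by $c(|\phi|+1)$, so your Step 1 claim $|G_l|\le c(|\phi|+1)$ fails. (The spatial variation of $B$ cannot be avoided: $B$ must satisfy $B\nu=\nu^*B=0$ with $\nu$ varying, and in the next section it is forced to solve the ODE (\ref{ode}).) After testing with $w_i/|w|$ and integrating by parts, this term produces
\begin{equation*}
-\veps\int_\Gamma\frac{1}{|w|}\,Q_{il}\,\ud_iB_{km}\,w_m\,\ud_kw_l,\qquad Q_{il}=\delta_{il}-\frac{w_iw_l}{|w|^2},
\end{equation*}
which still contains $\ud_k w_l$ and therefore cannot be closed by Gronwall on $\int_\Gamma|w|$ alone.

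This is also why your Step 2 decision to ``discard'' the elliptic term because it has a favourable sign loses exactly the quantity you need. The paper keeps the elliptic contribution $\veps\int_\Gamma\frac{1}{|w|}Q_{il}B_{km}\ud_kw_l\ud_mw_i$ and absorbs the bad cross term into half of it via a Cauchy--Schwarz inequality for the positive-semidefinite tensor-product form $Q\otimes B$ (writing the cross term as $Q_{il}B_{rs}z_{rl}\zeta_{is}$ with $z_{rl}=\ud_rw_l$ and $\zeta_{is}=B^{-1}_{sk}P_{kp}\ud_iB_{pm}w_m$), followed by one more use of Lemma \ref{VertauschungLemma} and the identity $Q_{il}\nu_i=\nu_l$ to convert the resulting form $Q_{il}B_{rs}\ud_rw_l\ud_iw_s$ into the one actually controlled by the elliptic term plus zeroth-order curvature remainders. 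Your argument is correct in all other respects (the flux term, the transport theorem, the $|\phi|_\delta$ regularization and the Gronwall step all match the paper), but without this absorption step the proof does not close; to repair it you must retain the diffusion term and carry out the weighted Cauchy--Schwarz estimate, or otherwise show that the $\nabla_\Gamma B$ contribution can be controlled.
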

\begin{proof}
Set $w_i=\ud_iu$ and $w=(w_1,\dots,w_{n+1})$.
We take the derivative $\ud_i$ of the regularized PDE (\ref{reg-pde}),
\begin{equation*}
\ud_i\dot u + \ud_i(u\nabla_\Gamma \cdot v) +\ud_i\nabla_\Gamma\cdot(f(\cdot,u))
-\veps \ud_i\nabla_\Gamma \cdot \left(B \nabla_\Gamma u\right)=0
\end{equation*}
and treat the terms separately.
With the equation (\ref{dotgrad}) we get
\begin{equation}\label{di-1}
\ud_i\dot u= (\ud_iu)\dot~ + A_{ir}(v)\ud_r u=\dot w_i+A_{ir}(v)w_r.
\end{equation}
Clearly
\begin{equation}\label{di-2}
\ud_i(u\nabla_\Gamma\cdot v)=w_i\nabla_\Gamma\cdot v+u\ud_i\nabla_\Gamma\cdot v.
\end{equation}
For the nonlinear term we get with the use of (\ref{vertauschung})
\begin{eqnarray}
\lefteqn{
\ud_i\nabla_\Gamma\cdot(f(\cdot,u))}\label{di-3}\\
&&= \ud_i(\ud_kf_k(\cdot,u)+f_{ku}(\cdot,u)\ud_ku)\nonumber\\
&&=\ud_i\ud_kf_k(\cdot,u)+\ud_kf_{ku}(\cdot,u)w_i+\ud_if_{ku}(\cdot,u)w_k\nonumber\\
&&+f_{kuu}(\cdot,u)w_iw_k+f_{ku}(\cdot,u)\left(\ud_kw_i+\H_{kl}w_l\nu_i
-\H_{il}w_l\nu_k\right).\nonumber
\end{eqnarray}
With (\ref{vertauschung}) the second order term can be rewritten as follows:
\begin{eqnarray}
\lefteqn{ \ud_i\nabla_\Gamma \cdot(B\nabla_\Gamma u)
=\ud_i\ud_k(B_{km}\ud_mu)}\label{di-4}\\
&&=\ud_k\ud_i(B_{km}\ud_mu)
+\H_{kl}\ud_l(B_{km}\ud_mu)\nu_i-\H_{il}\ud_l(B_{km}\ud_mu)\nu_k\nonumber\\
&&=\ud_k(\ud_iB_{km}\ud_mu)+\ud_k(B_{km}(\ud_m\ud_iu+\H_{ml}\ud_lu\nu_i-\H_{il}\ud_lu\nu_m))\nonumber\\
&&+\H_{kl}\ud_l(B_{km}\ud_mu)\nu_i-\H_{il}\ud_l(B_{km}\ud_mu)\nu_k\nonumber\\
&&=\ud_k(\ud_iB_{km} w_m)+
\ud_k(B_{km}(\ud_mw_i+\H_{ml}w_l\nu_i-\H_{il}w_l\nu_m))\nonumber\\
&&+\H_{kl}\ud_l(B_{km}w_m)\nu_i-\H_{il}\ud_l(B_{km}w_m)\nu_k\nonumber\\
&&=\ud_k(B_{km}\ud_mw_i)+\ud_k(\ud_iB_{km}w_m)
+\ud_k(B_{km}(\H_{ml}w_l\nu_i-\H_{il}w_l\nu_m))\nonumber\\
&&+\H_{kl}\ud_l(B_{km}w_m)\nu_i-\H_{il}\ud_l(B_{km}w_m)\nu_k\nonumber\\
&&=\ud_k(B_{km}\ud_mw_i)+\ud_k(\ud_iB_{km}w_m)
+\ud_k(B_{km}\H_{ml}w_l)\nu_i\nonumber\\
&&+B_{km}\H_{ml}\H_{ik}w_l +\H_{kl}\ud_l(B_{km}w_m)\nu_i
+\H_{il}\H_{lk}B_{km}w_m.
\nonumber
\end{eqnarray}
For the last term we have used $\H_{il}\ud_l(B_{km}w_m)\nu_k=-\H_{il}\H_{lk}B_{km}w_m$.
We now collect the intermediate results (\ref{di-1}), (\ref{di-2}), (\ref{di-3})
and (\ref{di-4}) to arrive at the following PDE for $w_i=\ud_iu$.
\begin{eqnarray*}\label{dipde}
\lefteqn{\dot w_i+A_{ir}(v)w_r+w_i\nabla_\Gamma\cdot v+u\ud_i\nabla_\Gamma\cdot v
+\ud_i\ud_kf_k(\cdot,u)+\ud_kf_{ku}(\cdot,u)w_i }\\
&&+\ud_if_{ku}(\cdot,u)w_k
+f_{kuu}(\cdot,u)w_iw_k+f_{ku}(\cdot,u)\left(\ud_kw_i+\H_{kl}w_l\nu_i
-\H_{il}w_l\nu_k\right)\\
&&-\veps\left(\ud_k(B_{km}\ud_mw_i)+\ud_k(\ud_iB_{km}w_m)
+\ud_k(B_{km}\H_{ml}w_l)\nu_i\right)\\
&&-\veps\left(\H_{ik}\H_{ml}B_{km}w_l+\H_{kl}\ud_l(B_{km}w_m)\nu_i
+\H_{il}\H_{lk}B_{km}w_m\right)=0.
\end{eqnarray*}
We multiply this equation by $\frac{w_i}{|w|}$,
sum with respect to $i$ from $1$ to $n+1$, use the fact that $w$ is a tangent
vector, and get
\begin{eqnarray} \label{wdgl}
\lefteqn{ |w|\dot~+|w|\nabla_\Gamma\cdot v + |w|\ud_kf_{ku}(\cdot,u)+f_{kuu}(\cdot,u)|w|w_k
+f_{ku}(\cdot,u)\frac{w_i}{|w|}\ud_kw_i }\\
&&+\frac{w_i}{|w|}\ud_i\ud_kf_k(\cdot,u)\nonumber
+\ud_if_{ku}(\cdot,u)\frac{w_i}{|w|}w_k
-\veps \frac{w_i}{|w|}\ud_k(B_{km}\ud_mw_i)\\
\nonumber
&&=-\ud_iv_r \frac{w_i}{|w|}w_r+\veps\frac{w_i}{|w|}\ud_k(\ud_iB_{km}w_m)\\
&& +\veps B_{km}\H_{ml}\H_{ik}w_l\frac{w_i}{|w|}+\veps B_{km}\H_{il}\H_{lk}w_m\frac{w_i}{|w|}
-u\frac{w_i}{|w|}\ud_i\nabla_\Gamma\cdot v.  \nonumber
\end{eqnarray}
Here we also used that $f_{ku}(\cdot,u)\nu_k=0$ and $w_i\nu_i=0$.
We now observe that
\begin{equation}\label{w-1}
\ud_kf_{ku}(\cdot,u)|w|+f_{kuu}(\cdot,u)w_k|w|+f_{ku}(\cdot,u)\frac{w_i}{|w|}\ud_kw_i
=\ud_k(f_{ku}(\cdot,u)|w|).
\end{equation}
The result of Lemma \ref{funk-absch-Lemma} allows us to estimate some terms in (\ref{wdgl}).
\begin{eqnarray}\label{w-2}
\lefteqn{|w|\dot~ + |w|\nabla_\Gamma\cdot v +\nabla_\Gamma\cdot(f_u(\cdot,u)|w|)\
-\veps \frac{w_i}{|w|}\ud_k(B_{km}\ud_mw_i) }\\
&&\leq c_1 + c_2 |w| + c_3 |u|
+\veps \frac{w_i}{|w|}\ud_k(\ud_iB_{km}w_m)
\nonumber
\end{eqnarray}
We rewrite the second order term on the left hand side of this equation
(integrated over $\Gamma$).
\begin{eqnarray*}
\lefteqn{
-\veps\int_\Gamma \frac{w_i}{|w|}\ud_k\left(B_{km}\ud_mw_i\right) }\\
&&=-\veps\int_\Gamma\frac{w_i}{|w|}H\nu_kB_{km}\ud_mw_i
+\veps\int_\Gamma\ud_k\left(\frac{w_i}{|w|}\right)B_{km}\ud_mw_i\\
&&=\veps\int_\Gamma\frac{1}{|w|}Q_{il}B_{km}\ud_kw_l\ud_mw_i,
\end{eqnarray*}
where we have set
$$Q_{il}=\delta_{il}-\frac{w_iw_l}{|w|^2},\quad i,l=1,\dots,n+1.$$
We now estimate the last term on the right hand side of (\ref{w-2}) integrated over $\Gamma$.
\begin{eqnarray*}
\lefteqn{
\veps \int_\Gamma \frac{w_i}{|w|}\ud_k\left(\ud_iB_{km}w_m\right) }\\
&& =\veps\int_\Gamma\frac{w_i}{|w|}H\nu_k\ud_iB_{km}w_m
-\veps\int_\Gamma \frac{1}{|w|}Q_{il}\ud_kw_l\ud_iB_{km}w_m\\
&&=-\veps\int_\Gamma \frac{w_i}{|w|}HB_{km}\H_{ik}w_m
-\veps\int_\Gamma\frac{1}{|w|}Q_{il}\ud_iB_{km}w_m\ud_kw_l\\
&&\leq c_4 \veps \int_\Gamma |w|
-\veps\int_\Gamma\frac{1}{|w|}Q_{il}\ud_iB_{km}w_m\ud_kw_l
\end{eqnarray*}
Since the matrix $Q$ is positive semidefinite and since $B$ is
positive definite on the tangent space we can estimate the last
term on the right hand side. We use the abbreviations
$$z_{rl}=\ud_rw_l,\quad \zeta_{is}=B^{-1}_{sk}P_{kp}\ud_iB_{pm}w_m.$$
For any $\delta > 0$ we then have
\begin{eqnarray*}
\lefteqn{
\left|Q_{il}\ud_iB_{km}\ud_kw_l w_m\right|
=\left|Q_{il}B_{rs}z_{rl}\zeta_{is}\right| }\\
&&\leq \sqrt{Q_{il}B_{rs}z_{rl}z_{is}}\sqrt{Q_{il}B_{rs}\zeta_{is}\zeta_{rl}}\\
&&\leq \frac{\delta}{2}Q_{il}B_{rs}z_{rl}z_{is}
+\frac{1}{2\delta}Q_{il}B_{rs}\zeta_{is}\zeta_{rl}\\
&&\leq  \frac{\delta}{2}Q_{il}B_{rs}\ud_rw_l\ud_iw_s
+c(\delta)|w|^2.
\end{eqnarray*}

The first equation can be obtained as follows:

\begin{eqnarray*}
Q_{il}B_{rs}z_{rl}\zeta_{is}&=&Q_{il}B_{rs}\ud_rw_lB^{-1}_{sk}P_{kp}\ud_iB_{pm} w_m=Q_{il}\ud_rw_lP_{kp}\ud_iB_{pm} w_m \delta_{rk}\\
&=& Q_{il}\ud_rw_lP_{rp}\ud_i B_{pm} w_m =Q_{il}\ud_rw_l(\delta_{rp}-\nu_r\nu_p)\ud_i B_{pm} w_m\\
&=&Q_{il}\ud_pw_l\ud_i B_{pm} w_m.
\end{eqnarray*}

Now because of
$$\ud_iw_s=\ud_sw_i+\H_{sp}w_p\nu_i-\H_{ip}w_p\nu_s$$
we get
\begin{eqnarray*}
Q_{il}B_{rs}\ud_rw_l\ud_iw_s=
Q_{il}B_{rs}\ud_rw_l\ud_sw_i+Q_{il}B_{rs}\H_{sp}\nu_i\ud_rw_l w_p
\end{eqnarray*}
and because of $Q_{il}\nu_i=\nu_l$,
the second term on the right hand side can be rewritten as follows:
\begin{eqnarray*}
Q_{il}B_{rs}\H_{sp}\nu_i\ud_r w_l w_p
&=&\nu_lB_{rs}\H_{sp}\nu_i\ud_r w_l w_p \\
&=&B_{rs}\H_{sp}\nu_i\left(\ud_r (w_l\nu_l)-w_l\H_{rl}\right)w_p\\
&=&-B_{rs}\H_{sp}\H_{rl}\nu_i w_l w_p.
\end{eqnarray*}
Collecting the previous estimates we arrive at
\begin{eqnarray*}
\veps\int_\Gamma\frac{w_i}{|w|}\ud_k(\ud_iB_{km}w_m)
\leq c \veps\int_\Gamma|w|
+\frac{\veps}{2}\int_\Gamma \frac{1}{|w|}Q_{il}B_{km}\ud_kw_l\ud_mw_i.
\end{eqnarray*}
We integrate (\ref{w-2}) over $\Gamma$ and finally get
\begin{eqnarray}
\lefteqn{
\int_\Gamma |w|\dot~ + |w|\nabla_\Gamma\cdot v
+\int_\Gamma \nabla_\Gamma\cdot\left(f_u(\cdot,u)|w|\right) }\\
&&\leq - \frac{\veps}{2}\int_\Gamma \frac{1}{|w|}
Q_{il}B_{km}\ud_kw_l\ud_mw_i
+c\int_\Gamma|w| \nonumber\\
&&\leq c \int_\Gamma |w|+c\int_\Gamma|u|+c\int_\Gamma |u| + c,
\nonumber
\end{eqnarray}
where we again have used the fact that $\Gamma$ is compact.
Since
\begin{equation*}
\int_\Gamma\nabla_\Gamma\cdot(f(\cdot,u)|w|)
=\int_\Gamma H\nu\cdot f(\cdot,u)|w|=0,
\end{equation*}
we finally get the estimate
\begin{equation*}\label{w-3}
\frac{d}{dt}\int_\Gamma|w|
\leq c\int_\Gamma|w|+c.
\end{equation*}
Here we have used the bound (\ref{funk-absch})
for $u$. In summary we have proved that
$$\sup_{(0,T)}\int_\Gamma|w|\leq c$$
independently of $\veps$. In the last step we used (\ref{aw-reg}). Lemma \ref{gradient-absch} is proved.
\end{proof}
\section{Estimate of the time derivative}
Assume that the matrix $B=\left(B_{ik}\right)_{i,k=1,\dots,n+1}$
satisfies
\begin{equation}\label{ode}
\dot B = B A(v)+A(v)^* B+\lambda B,\, B(\cdot,0)=B_0
\end{equation}
where $\lambda>0$ is a constant and $B_0$ is a symmetric and positive definite $(n+1)\times (n+1)$ matrix.
\begin{lemma}\label{BMatrixLemma}
There is a symmetric and positive definite matrix $B$ which solves (\ref{ode}).
\end{lemma}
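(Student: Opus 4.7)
The plan is to view (\ref{ode}) as a first-order linear matrix ODE along each material trajectory $t\mapsto\Phi(x_0,t)$, $x_0\in\Gamma_0$. Since the surface motion is smooth by Assumption \ref{ass1}, the coefficient matrix $A(v)$ is smooth and bounded on $G_T$, so the Picard--Lindel\"of theorem applied along each trajectory produces a unique smooth solution $B$ on $[0,T]$, with smooth dependence on $x_0$ following from smooth dependence on initial data.

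Symmetry would be propagated from the initial datum. Setting $D=B-B^*$ and using (\ref{ode}), a short calculation shows
\begin{equation*}
\dot D = D A(v) + A(v)^* D + \lambda D, \qquad D(0)=B_0-B_0^*=0,
\end{equation*}
so uniqueness of the linear ODE forces $D\equiv 0$, i.e.\ $B(t)$ remains symmetric for all $t\in[0,T]$.

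For positive definiteness I would construct the solution explicitly. Let $Z(t)$ be the fundamental matrix along the trajectory defined by
\begin{equation*}
\dot Z = Z A(v),\qquad Z(0)=I,
\end{equation*}
which exists on $[0,T]$ and is invertible since $\det Z(t)=\exp\int_0^t \operatorname{tr} A(v)\,ds\neq 0$. Set
\begin{equation*}
B(t) := e^{\lambda t}\, Z(t)^* B_0\, Z(t).
\end{equation*}
Direct differentiation gives
\begin{equation*}
\dot B = \lambda B + e^{\lambda t}\bigl((ZA)^* B_0 Z + Z^* B_0 (ZA)\bigr) = \lambda B + A(v)^* B + B\, A(v),
\end{equation*}
and $B(0)=B_0$, so $B$ solves (\ref{ode}). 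Symmetry is manifest, and for any $\xi\in\R^{n+1}\setminus\{0\}$,
\begin{equation*}
\xi^* B(t)\xi = e^{\lambda t}\,(Z(t)\xi)^* B_0\, (Z(t)\xi) \geq e^{\lambda t}\,\mu_0\,|Z(t)\xi|^2 > 0,
\end{equation*}
where $\mu_0>0$ is the smallest eigenvalue of $B_0$ and $Z(t)\xi\neq 0$ by invertibility of $Z(t)$.

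The whole argument reduces to linear matrix ODE theory. The only substantive step is spotting the ansatz $B=e^{\lambda t}Z^*B_0 Z$ with $\dot Z=ZA(v)$; reversing the order of the conjugation would instead produce $\dot B = A(v)B + B A(v)^* + \lambda B$ rather than (\ref{ode}), so the placement matters. Once the ansatz is in hand, existence, symmetry, and positive definiteness all follow in one line each; this is really the only obstacle, and it is mild.
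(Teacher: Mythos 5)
Your proposal is correct, and the interesting part is that your positive-definiteness argument takes a genuinely different route from the paper's. The existence and symmetry steps coincide: the paper also invokes linear ODE theory along trajectories and shows $B-B^*$ satisfies the same homogeneous linear ODE with zero initial datum. For coercivity, however, the paper writes $\left(e^{-\lambda t}B\right)^{\dot{}}=e^{-\lambda t}\left(BA(v)+A(v)^*B\right)$, integrates to get $B(\cdot,t)=e^{\lambda t}\bigl(B_0+\int_0^te^{-\lambda s}(BA(v)+A(v)^*B)\,ds\bigr)$, and bounds the integral term crudely by $c|\xi|^2\int_0^te^{-\lambda s}\,ds$, which yields positive definiteness only for $\lambda$ sufficiently large (and the constant $c$ there implicitly involves a bound on $B$ itself, so the largeness condition is somewhat delicate). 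Your explicit congruence ansatz $B=e^{\lambda t}Z^*B_0Z$ with $\dot Z=ZA(v)$, $Z(0)=I$ verifies the ODE directly, makes symmetry manifest, and gives $\xi^*B\xi=e^{\lambda t}(Z\xi)^*B_0(Z\xi)>0$ for \emph{every} $\lambda$, with no largeness restriction and no perturbation estimate; on the compact set $G_T$ the invertibility of $Z$ even upgrades this to a uniform coercivity constant, which is what the regularized problem actually needs. This is a cleaner and slightly stronger conclusion than the paper's; the only thing the paper's integral-inequality route buys is robustness to lower-order perturbations of the ODE, which is not needed here.
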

\begin{proof}
The existence of $B$ follows easily from ODE theory, since (\ref{ode}) is a linear transport equation.

Let us show, that $B$ is symmetric. The transposed matrix solves the
ODE
$$\dot B^*=A(v)^*B^*+B^*A(v)+\lambda B^*.$$
If we subtract this equation from (\ref{ode}), we get
\begin{equation*}
(B-B^*)^{\dot{~}}=(B-B^*)A(v)+A(v)^*(B-B^*)+\lambda (B-B^*)
\end{equation*}
Since $(B-B^*)(\cdot,0)=0$ by assumption, we have that $B-B^*=0$ for all times.

The coercivity is seen as follows.
\begin{eqnarray*}
\left(e^{-\lambda t}B\right)^{\dot{}}=e^{-\lambda t}\left(\dot B-\lambda B\right)
=e^{-\lambda t}\left(BA(v)+A(v)^*B\right).
\end{eqnarray*}
Thus
\begin{eqnarray*}
B(\cdot,t)=e^{\lambda t}\left(B_0 + \int_0^t e^{-\lambda s}\left(BA(v)+A(v)^*B\right)\,ds\right)
\end{eqnarray*}
and from this we get for $\xi\in\R^{n+1}$ with the coercivity of $B_0$,
$B_0\xi\cdot\xi\geq d_0|\xi|^2$ ($d_0>0$), and the smoothness
of $A(v)$ and $B$:
\begin{eqnarray*}
B(\cdot,t)\xi\cdot\xi\geq
e^{\lambda t}B_0\xi\cdot\xi-ce^{\lambda t}|\xi|^2\int_0^te^{-\lambda s}\,ds
\geq \left(e^{\lambda t}(d_0-\frac{c}{\lambda})+\frac{c}{\lambda}\right)|\xi|^2.
\end{eqnarray*}
So, B is positive definite if we choose $\lambda$ big enough.
\end{proof}

\begin{lemma} \label{udot-absch} Assume that $u$ solves the regularized PDE (\ref{regpde}).
Then
\begin{equation}\label{dot-absch}
\sup_{(0,T)}\int_\Gamma|\dot u|\leq c
\end{equation}
with a constant $c$ which does not depend on $\veps$.
\end{lemma}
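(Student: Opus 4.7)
The plan is to parallel the $L^1$-gradient estimate of Lemma \ref{gradient-absch}, but applied to $z := \dot u$ rather than to $w := \nabla_\Gamma u$. First I would apply the material derivative to the regularized PDE (\ref{reg-pde}). The transport, reaction and flux contributions transform routinely via the commutator (\ref{dotgrad}): $(u\nabla_\Gamma\cdot v)\dot{}$, $(\nabla_\Gamma\cdot f(\cdot,u))\dot{}$ and the chain rule produce terms linear in $z$ and in $\nabla_\Gamma u$ with smooth coefficients depending on $f$, $f_u$, $f_{uu}$, $v$, $\nabla_\Gamma v$ and $A(v)$.

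The delicate part is the diffusion term, and it is precisely for this step that the ODE (\ref{ode}) for $B$ was designed. Applying $\dot{}$ to $\veps\,\nabla_\Gamma\cdot(B\nabla_\Gamma u)$ and using (\ref{dotgrad}) once on the outer divergence and once on the inner gradient, one obtains
\[
\bigl(\nabla_\Gamma\cdot(B\nabla_\Gamma u)\bigr)\dot{}
=\nabla_\Gamma\cdot(B\nabla_\Gamma\dot u)+\bigl(\dot B-BA(v)-A(v)^{*}B\bigr)_{km}\ud_k\ud_m u+\text{l.o.t.},
\]
where the lower-order terms are linear in $\nabla_\Gamma u$ and $u$ with smooth coefficients. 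By (\ref{ode}) the second-order remainder collapses to $\lambda\, B_{km}\ud_k\ud_m u=\lambda\,\nabla_\Gamma\cdot(B\nabla_\Gamma u)+\text{l.o.t.}$, and, using (\ref{reg-pde}) once more, $\veps\lambda\,\nabla_\Gamma\cdot(B\nabla_\Gamma u)=\lambda\bigl(\dot u+u\,\nabla_\Gamma\cdot v+\nabla_\Gamma\cdot f(\cdot,u)\bigr)$. Consequently the equation for $z=\dot u$ takes the form
\[
\dot z+z\,\nabla_\Gamma\cdot v-\veps\,\nabla_\Gamma\cdot(B\nabla_\Gamma z)=R,
\]
with a right-hand side $R$ satisfying $|R|\le c(1+|u|+|\nabla_\Gamma u|+|z|)$ and constants independent of $\veps$.

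Multiplying by a smooth approximation $\sigma_\delta$ of $\operatorname{sgn}(z)$ and integrating over $\Gamma(t)$, the transport theorem (Lemma \ref{TransportTheorem}) converts $\dot z\,\sigma_\delta(z)+z\,\nabla_\Gamma\cdot v\,\sigma_\delta(z)$ into $\frac{d}{dt}\int_\Gamma|z|$ in the limit $\delta\to 0$, while the diffusion term contributes the nonnegative quantity $\veps\int_\Gamma\sigma_\delta'(z)\,B\nabla_\Gamma z\cdot\nabla_\Gamma z$ (positivity from the ellipticity of $B$ on the tangent space), which is discarded. The remaining right-hand side is controlled via Lemmas \ref{funk-absch-Lemma} and \ref{gradient-absch} together with the compactness of $\Gamma$, yielding
\[
\frac{d}{dt}\int_\Gamma|z|\le c\int_\Gamma|z|+c.
\]
The initial datum $\int_{\Gamma_0}|\dot u_\veps(\cdot,0)|$ is bounded $\veps$-uniformly by reading $\dot u_\veps(\cdot,0)$ off the PDE: the flux term rewrites as $\nabla_\Gamma\cdot f(\cdot,u_{0\veps})=f_u(\cdot,u_{0\veps})\cdot\nabla_{\Gamma_0}u_{0\veps}$ (exploiting the tangential divergence-freeness of $f$) and is controlled in $L^1$ by (\ref{aw-reg}), while $\veps\,\nabla_\Gamma\cdot(B\nabla_{\Gamma_0}u_{0\veps})$ is controlled in $L^1$ precisely thanks to the $\veps$-weighted second-derivative bound in (\ref{aw-reg}). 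Gronwall closes the argument.

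The main obstacle is the second step. Naively, the commutators $[\dot{\ },\nabla_\Gamma]$ produce a second-order term of the form $\veps\bigl(BA(v)+A(v)^{*}B\bigr)\!:\!\nabla_\Gamma^2 u$ whose sign is not controlled; without compensation this would spoil any $\veps$-uniform estimate, since $\nabla_\Gamma^2 u$ is not uniformly bounded. The ODE (\ref{ode}) is engineered so that $\dot B$ exactly cancels $BA(v)+A(v)^{*}B$ up to the dissipative $\lambda B$ term, reducing the dangerous piece to a multiple of $\veps\,\nabla_\Gamma\cdot(B\nabla_\Gamma u)$, which the PDE itself trades against lower-order quantities already under control by the preceding lemmas.
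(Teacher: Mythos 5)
Your overall strategy coincides with the paper's: differentiate the regularized equation materially, use the commutator (\ref{dotgrad}), let the ODE (\ref{ode}) for $B$ cancel the uncontrolled second-order remainder $\veps\bigl(BA(v)+A(v)^{*}B\bigr)_{km}\ud_k\ud_m u$ down to $\veps\lambda\,\nabla_\Gamma\cdot(B\nabla_\Gamma u)$, trade that against lower-order terms via the PDE itself, multiply by a regularized sign of $z$, and close with Gronwall; your treatment of the initial datum via (\ref{aw-reg}) is also the intended one and is in fact spelled out more explicitly than in the paper.

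There is, however, one concrete error in your reduction. The material derivative of the flux term contains $f_{lu}(\cdot,u)\,\ud_l z$, a genuine first-order transport term in $z$. It cannot be absorbed into a remainder $R$ with $|R|\le c(1+|u|+|\nabla_\Gamma u|+|z|)$ as you claim, and after integration it is not ``controlled via Lemmas \ref{funk-absch-Lemma} and \ref{gradient-absch}'', since $\int_\Gamma|\nabla_\Gamma z|$ is not bounded uniformly in $\veps$. The paper keeps this term and, after multiplication by $\mathrm{sgn}\,z$, combines it with $f_{luu}(\cdot,u)\,z\,\ud_lu$ into an exact tangential divergence,
\[
\mathrm{sgn}(z)\bigl(f_{lu}(\cdot,u)\ud_lz+z\,f_{luu}(\cdot,u)\ud_lu\bigr)
=\nabla_\Gamma\cdot\bigl(f_u(\cdot,u)|z|\bigr)-\ud_lf_{lu}(\cdot,u)\,|z|,
\]
which is the analogue of (\ref{w-1}). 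The divergence then vanishes upon integration over the closed surface because $f_u(\cdot,u)$ is tangential, $\int_\Gamma\nabla_\Gamma\cdot(f_u(\cdot,u)|z|)=\int_\Gamma H\nu\cdot f_u(\cdot,u)|z|=0$, and the second summand is of order $|z|$. With this repair --- which is exactly the mechanism of Lemma \ref{gradient-absch} that you announce you are paralleling, but do not actually carry out for $z$ --- the rest of your argument goes through as in the paper.
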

\begin{proof}
We take the material derivative of (\ref{reg-pde}) and set $z=\dot u$.
\begin{eqnarray*}
\ddot u + \dot u \nabla_\Gamma\cdot v +  u \left(\nabla_\Gamma\cdot v\right)\dot~
+\left(\nabla_\Gamma\cdot (f(\cdot,u))\right)\dot~
- \veps \left(\nabla_\Gamma\cdot (B\nabla_\Gamma u)\right)\dot~=0
\end{eqnarray*}
We use (\ref{dotgrad}) and treat the terms separately.
Clearly
\begin{equation}\label{z-1}
\ddot u + \dot u\nabla_\Gamma\cdot v = \dot z + z \nabla_\Gamma\cdot v.
\end{equation}
For the nonlinear terms we get
\begin{eqnarray}\label{z-2}
\lefteqn{
\label{z-2}
\left(\nabla_\Gamma\cdot f(\cdot,u))\right)\dot~
=\left(\ud_lf_l(\cdot,u)+f_{lu}(\cdot,u)\ud_lu\right)\dot~ }\\
\nonumber
&&=\ud_l\dot f_l(\cdot,u)-A_{lr}(v)\ud_rf_l(\cdot,u)+\dot f_{lu}(\cdot,u)\ud_lu
+f_{luu}(\cdot,u)z\ud_lu\\
\nonumber
&&\quad+f_{lu}(\cdot,u)\left(\ud_l z-A_{lr}(v)\ud_ru\right).
\end{eqnarray}
We calculate the material derivative of the second order term.
\begin{eqnarray*}
\lefteqn{
\left(\nabla_\Gamma\cdot\left(B\nabla_\Gamma u\right)\right)\dot~
=\left(\ud_k\left(B_{kl}\ud_lu\right)\right)\dot~ }\\
&&=\ud_k\left(B_{kl}\ud_lu\right)\dot~-A_{kr}(v)\ud_r\left(B_{kl}\ud_lu\right)\\
&&=\ud_k\left(B_{kl}\left(\ud_lz-A_{lr}(v)\ud_r u\right)
+\dot B_{kl}\ud_l u\right) -A_{kr}(v)\ud_r\left(B_{kl}\ud_lu\right)\\
&&=\nabla_\Gamma\cdot\left(B\nabla_\Gamma z\right)
-\ud_k\left( \left(B_{kr}A_{rl}(v)-\dot B_{kl}+A_{rk}(v)B_{rl}\right)\ud_lu\right)\\
&&\quad+B_{rl}\ud_lu\ud_k(A_{rk}(v))
\end{eqnarray*}
Now choose the matrix $B$ such that it satisfies (\ref{ode}). Then
\begin{equation}\label{z-3}
\left(\nabla_\Gamma\cdot\left(B\nabla_\Gamma u\right)\right)\dot~
=\nabla_\Gamma\cdot\left(B\nabla_\Gamma z\right)
+\lambda\ud_k(B_{kl}\ud_lu)+B_{rl}\ud_lu\ud_k(A_{rk}(v))
\end{equation}
We collect the terms (\ref{z-1}), (\ref{z-2}) and (\ref{z-3})
to get
\begin{eqnarray}
\lefteqn{\label{z-4}
\dot z + z\nabla_\Gamma\cdot v+u(\nabla_\Gamma\cdot v)^{\cdot}
+\ud_l\dot f_l(\cdot,u)-A_{lr}(v)\ud_rf_l(\cdot,u)
+\dot f_{lu}(\cdot,u)\ud_lu }\\
\nonumber
&&\quad+f_{luu}(\cdot,u)z\ud_lu
+f_{lu}(\cdot,u)\ud_lz-A_{lr}(v)f_{lu}(\cdot,u)\ud_ru\\
\nonumber
&&\quad-\veps \nabla_\Gamma\cdot\left(B\nabla_\Gamma z\right)
-\veps B_{rl}\ud_lu\ud_k(A_{kr}(v))
-\lambda\left(z+u\nabla_\Gamma\cdot v + \nabla_\Gamma\cdot f(\cdot,u)\right)
=0.
\end{eqnarray}
We observe that similarly as in (\ref{w-1}) we have
\begin{equation*}
\frac{z}{|z|}\left(f_{lu}(\cdot,u)\ud_lz+zf_{luu}(\cdot,u)\ud_lu\right)
=\nabla_\Gamma\cdot\left(f_u(\cdot,u)|z|\right)-\ud_lf_{lu}(\cdot,u)|z|.
\end{equation*}
Multiplying (\ref{z-4}) with $\mbox{sign} z$ we get (correctly: use
$\frac{z}{\sqrt{\delta^2+z^2}}$ for $\delta\to 0$ etc.)
\begin{eqnarray*}
\lefteqn{
|z|\dot~ +|z|\nabla_\Gamma\cdot v +\nabla_\Gamma\cdot(f_u(\cdot,u)|z|) }\\
&&\leq \veps ~\mbox{sign} z~ \nabla_\Gamma\cdot(B\nabla_\Gamma z)
+c_1 + c_2|\nabla_\Gamma u| +c_3|z|.
\end{eqnarray*}
Here we have used the boundedness of $u$ uniformly with respect
to the regularization parameter $0<\veps\leq 1$.
With the same arguments as in the proof of Lemma \ref{gradient-absch}
after integration over $\Gamma$ we get the inequality
\begin{equation*}
\frac{d}{dt}\int_\Gamma |z|\leq c_1+ c_2\int_\Gamma |\nabla_\Gamma u|+c_3\int_\Gamma|z|,
\end{equation*}
and Lemma \ref{gradient-absch} implies
\begin{equation*}
\sup_{(0,T)}\int_\Gamma |z|\leq c
\end{equation*}
with a constant $c$, which does not depend on $\veps$, since we can use (\ref{aw-reg}).
\end{proof}

\section{Existence for the conservation law}
\begin{theorem}\label{existence}  Assume Assumptions \ref{ass1}, (\ref{ode}) and let $u_0 \in L^1(\Gamma_0)$.
Then there exists an  entropy solution of (\ref{pde}).
\end{theorem}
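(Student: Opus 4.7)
The plan is to pass to the limit $\varepsilon\to 0$ in the family of smooth solutions $u_\varepsilon$ of the regularized problem (\ref{regpde}) with the specific matrix $B$ constructed in Lemma \ref{BMatrixLemma}. The initial data $u_{0\varepsilon}$ are chosen as smooth approximations of $u_0$ satisfying (\ref{aw-reg}) and converging to $u_0$ a.e. on $\Gamma_0$ (obtained, e.g., by extending $u_0$ to a tubular neighborhood of $\Gamma_0$ and mollifying). The three a priori estimates proved in the preceding sections then apply uniformly in $\varepsilon$: Lemma \ref{funk-absch-Lemma} gives a uniform $L^\infty(G_T)$ bound, Lemma \ref{gradient-absch} gives $\sup_{t\in(0,T)}\|\nabla_\Gamma u_\varepsilon(\cdot,t)\|_{L^1(\Gamma(t))}\leq c$, and Lemma \ref{udot-absch} gives $\sup_{t\in(0,T)}\|\dot u_\varepsilon(\cdot,t)\|_{L^1(\Gamma(t))}\leq c$.

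The heart of the argument is to extract a subsequence converging in $L^1(G_T)$ and, after a further subsequence, a.e. on $G_T$. I would pull everything back to the fixed initial surface via the diffeomorphism $\Phi(\cdot,t):\Gamma_0\to\Gamma(t)$, setting $\tilde u_\varepsilon(X,t)=u_\varepsilon(\Phi(X,t),t)$ for $X\in\Gamma_0$. Since $\Phi$ is a smooth diffeomorphism with uniformly bounded Jacobian (and inverse) on $[0,T]$, the tangential gradient $\nabla_\Gamma u_\varepsilon$ and the material derivative $\dot u_\varepsilon$ translate into the surface gradient $\nabla_{\Gamma_0}\tilde u_\varepsilon$ and the plain time derivative $\partial_t\tilde u_\varepsilon$ respectively, both still bounded in $L^\infty(0,T;L^1(\Gamma_0))$. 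Thus $(\tilde u_\varepsilon)$ is bounded in $L^\infty((0,T)\times\Gamma_0)\cap W^{1,1}((0,T)\times\Gamma_0)$, and the compact embedding $W^{1,1}\hookrightarrow L^1$ on the compact manifold-with-boundary $[0,T]\times\Gamma_0$ (Rellich--Kondrachov) yields strong $L^1$ compactness. Pushing this convergence forward through $\Phi$ gives $u_\varepsilon\to u$ in $L^1(G_T)$, and a further subsequence converges a.e. on $G_T$; the $L^\infty$ bound is preserved in the limit, so $u\in L^\infty(G_T)\subset L^1(G_T)$.

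Once almost everywhere convergence is established, the passage to the limit in the entropy inequality is a routine modification of Lemma \ref{ent-con}: the argument there is carried out for the simpler regularization $-\varepsilon\Delta_\Gamma u_\varepsilon$, but the same computation works for the anisotropic term $-\varepsilon\nabla_\Gamma\cdot(B\nabla_\Gamma u_\varepsilon)$ because $B$ is bounded on $[0,T]$ and the diffusive term carries a prefactor $\varepsilon$ which kills it in the limit (after testing against $\phi$ and integrating by parts, one obtains $\varepsilon\int B\nabla_\Gamma\eta(u_\varepsilon)\cdot\nabla_\Gamma\phi$ plus a non-positive $\varepsilon\int\eta''(u_\varepsilon)B\nabla_\Gamma u_\varepsilon\cdot\nabla_\Gamma u_\varepsilon\,\phi$ term, both harmless in the limit). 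Dominated convergence, using the uniform $L^\infty$ bound and a.e.\ convergence together with continuity of $\eta,\eta',q$, then delivers exactly the entropy inequality (\ref{entropy11}), so $u$ is an entropy solution in the sense of Definition \ref{entropy12}.

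The main obstacle is the compactness step: the bounds from Lemmas \ref{gradient-absch} and \ref{udot-absch} are only in $L^\infty(0,T;L^1)$, so one is genuinely working at the borderline $W^{1,1}$-regularity where one does not have the nicer Hilbert-space machinery available. The pull-back to a fixed surface reduces the geometric complications to a standard Rellich compactness statement, but care must be taken that the time regularity provided by the material derivative (rather than a plain partial derivative) is correctly translated; this is precisely why the flow $\Phi$ is used, since $\partial_t\tilde u_\varepsilon=\dot u_\varepsilon\circ\Phi$ follows directly from Definition \ref{MatDerDef}. A minor technical point is the approximation of a general $u_0$ by initial data satisfying (\ref{aw-reg}) with a uniform constant $c_0$, which is the reason why the theorem is naturally read as requiring enough regularity on $u_0$ (e.g.\ $u_0\in L^\infty\cap BV$) to admit such an approximating sequence.
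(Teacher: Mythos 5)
Your proposal is correct and follows essentially the same route as the paper: pull back to $\Gamma_0$ via $\Phi$, combine Lemmata \ref{funk-absch-Lemma}, \ref{gradient-absch} and \ref{udot-absch} into a uniform $H^{1,1}(\Gamma_0\times[0,T])$ bound, apply the Kondrakov compactness theorem to extract an $L^1$-convergent (hence a.e.\ convergent) subsequence, push forward, and pass to the limit in the entropy inequality as in Lemma \ref{ent-con}. Your two added observations --- that the anisotropic term $-\veps\nabla_\Gamma\cdot(B\nabla_\Gamma u_\veps)$ needs the (harmless) modification of Lemma \ref{ent-con}, and that the hypothesis $u_0\in L^1(\Gamma_0)$ is really only consistent with the uniform constant $c_0$ in (\ref{aw-reg}) if $u_0$ has more regularity such as $L^\infty\cap\operatorname{BV}$ --- are both accurate and are glossed over in the paper's own proof.
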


\begin{proof}
Let $u_\varepsilon$ be the solution of (\ref{regpde}) with
$u_\varepsilon(\cdot,0)=u_{0\varepsilon}$ on $\Gamma_0$ and $u_{0\varepsilon} \to u_0$ a.e. on $\Gamma_0$
, $\Phi$ as in Assumption \ref{ass1}, and
$$w_\varepsilon(y,t):=u_\varepsilon(\Phi(y,t),t)$$ for $y\in \Gamma_0$ and $t\in[0,T]$. Then due to the Lemmata \ref{gradient-absch} and
\ref{udot-absch} and the properties of $\Phi$ (see Assumption \ref{ass1}) we obtain

\bea \int_0^T \int_{\Gamma_0}\Big|\frac{\partial}{\partial t} w_\varepsilon\Big| \le const.
\eea

\bea \int_0^T \int_{\Gamma_0}|\nabla_{\Gamma_0} w_\varepsilon| \le const.
\eea

uniformly in $\varepsilon.$ This implies that $w_\varepsilon$ is uniformly bounded in $H^{1,1}(\Gamma_0\times [0,T])$. Then due to the Kondrakov-Theorem (see Aubin \cite{aubin}) we have a convergent subsequence $w_{\varepsilon'}$ and $w\in L^1(\Gamma_0\times [0,T])$ such that $$w_{\varepsilon'}\to w
\quad \mbox{in}\quad L^{1}(\Gamma_0\times [0,T]).$$ Define $\Phi_t(x):=\Phi(x,t)$.
Since $u_\varepsilon(x,t)=w_\varepsilon(\Phi_t^{-1}(x),t)$ and due to the properties of $\Phi$ (see Assumption \ref{ass1}) we have
$$u_{\varepsilon'}(x,t)\to u(x,t):= w(\Phi_t^{-1}(x),t) \quad \mbox{in} \quad L^1(G_T).$$ Then we proceed as in the proof for Lemma \ref{ent-con} to prove that $u$ is an entropy solution of (\ref{pde}).
\end{proof}
\section{Uniqueness for the conservation law}
In this section we are going to prove uniqueness of entropy solutions (see Theorem \ref{uniqueness})
as defined in Definition \ref{entropy12} or \ref{def:kruzkoventr}.
For the sake of brevity we suppress the integration elements $dx$, $dy$, $dt$ and $d\tau$ the in this section.
Integration is meant to be done over each of these variables that occur in the respective integral.

\newcommand{\abs}[1]{\left\vert#1\right\vert}
\newcommand{\scp}[2]{{ #1 \cdot #2 }}
\newcommand{\real}{\mathbb{R}} 
\newcommand{\F}{\mathbf{F}}
\renewcommand{\P}{\mathcal P}
\newcommand{\sign}{\operatorname{sign}}
\newcommand{\D}{\operatorname{D}}
\newcommand{\supp}{\operatorname{supp}\hspace{0.05cm}}
\newcommand{\todo}[1]{\textcolor{red}{TODO: #1 !!!}}
\newcommand{\avdots}{\big (\ \vdots \ \big)}

We will need that the initial data is approached in the following sense. The analogous result for the Euclidean case
can be found in \cite{Daf00}.

\begin{lemma}\label{lem:initialdata}
 There is a set $\mathcal E\in[0,T]$ of measure zero such that for $t\in[0,T]\backslash \mathcal E$
an entropy solution $u=u(x,t)$ fulfills
\begin{equation}
 \lim_{\substack{t\rightarrow 0, \\ t\in[0,T]\backslash \mathcal E}} \int_{\Gamma_0} | u(\Phi(x,t),t) - u_0(x)|^2 = 0.
\end{equation}
\end{lemma}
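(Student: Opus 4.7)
The plan is to pull the entropy solution back to the reference surface $\Gamma_0$ via $\Phi$ and to identify its $t\to 0^+$ trace with $u_0$ by testing the Kruzkov entropy inequality \eqref{eq:krukoventr} against a material-constant spatial profile times a time cutoff. Set $w(y,t):=u(\Phi(y,t),t)$, as in the proof of Theorem \ref{existence}. Since $\partial_t w_\varepsilon=\dot u_\varepsilon\circ\Phi$, Lemma \ref{udot-absch} together with the smoothness (and hence boundedness of the Jacobian) of $\Phi_t$ yields, uniformly in $\varepsilon$, the Lipschitz-in-time estimate
\[
\|w_\varepsilon(\cdot,t_2)-w_\varepsilon(\cdot,t_1)\|_{L^1(\Gamma_0)}\le c\,|t_2-t_1|.
\]
This estimate is preserved in the limit along the subsequence constructed in the proof of Theorem \ref{existence}, so $w$ admits a Lipschitz representative $\tilde w:[0,T]\to L^1(\Gamma_0)$; in particular $w^\star:=\tilde w(0)\in L^1(\Gamma_0)$ is well defined.

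To identify $w^\star$ with $u_0$, I would pick a nonnegative $\psi\in C^1(\Gamma_0)$, extend it material-constantly to $\tilde\psi(\Phi(y,t),t):=\psi(y)$ on $G_T$ so that $\dot{\tilde\psi}\equiv 0$, and let $\alpha_\delta\in C^1([0,T])$ be a standard cutoff with $\alpha_\delta(0)=1$, $\alpha_\delta\ge 0$, $\operatorname{supp}\alpha_\delta\subset[0,\delta]$. Testing \eqref{eq:krukoventr} against $\varphi_\delta:=\tilde\psi\,\alpha_\delta$, every contribution carrying $\alpha_\delta$ rather than $\alpha_\delta'$ is $O(\delta)$ by the $L^\infty$-bounds on $u$, $v$, $f$, $\nabla_\Gamma\tilde\psi$, and $\nabla_\Gamma\cdot v$ furnished by Assumptions \ref{ass1} and Lemma \ref{funk-absch-Lemma}. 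For the remaining $\dot\varphi_\delta$-term, changing variables $x=\Phi(y,t)$, using continuity of the area element $J(\cdot,t)$ with $J(\cdot,0)\equiv 1$, and exploiting the Lipschitz regularity of $\tilde w$ give
\[
\lim_{\delta\to 0^+}\int_0^T\!\!\int_{\Gamma}|u-k|\,\dot\varphi_\delta\;=\;-\int_{\Gamma_0}|w^\star(y)-k|\,\psi(y),
\]
so that the Kruzkov inequality collapses to
\[
\int_{\Gamma_0}|w^\star(y)-k|\,\psi(y)\;\le\;\int_{\Gamma_0}|u_0(y)-k|\,\psi(y)\qquad(k\in\R,\ \psi\ge 0).
\]
A standard Lebesgue-point argument (compare \cite{Daf00}) then closes the identification: choose $k=w^\star(y_0)$ at a common Lebesgue point $y_0$ of $w^\star$ and $u_0$ and let $\psi$ approximate $\mathbf{1}_{B_r(y_0)\cap \Gamma_0}$; the right-hand side reduces to $\int_{B_r(y_0)\cap\Gamma_0}|u_0-w^\star(y_0)|$, which tends to $0$ as $r\to 0$, forcing $u_0(y_0)=w^\star(y_0)$ and hence $w^\star=u_0$ almost everywhere on $\Gamma_0$.

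Finally, the $L^1$-convergence $\tilde w(t)\to u_0$ is upgraded to the claimed $L^2$-convergence by the elementary interpolation $\|g\|_{L^2(\Gamma_0)}^2\le\|g\|_{L^\infty(\Gamma_0)}\,\|g\|_{L^1(\Gamma_0)}$ applied to $g=\tilde w(t)-u_0$, combined with the uniform $L^\infty$-bound of Lemma \ref{funk-absch-Lemma} and the $L^\infty$-bound on $u_0$ from \eqref{aw-reg}; the exceptional set $\mathcal E$ absorbs the Lebesgue null set of times where the given representative of $u$ differs from $\tilde w$. The main obstacle I anticipate is precisely the passage $\delta\to 0^+$ in the pulled-back $\dot\varphi_\delta$-term, since it is here that one has to cash in the Lipschitz-in-time regularity (Lemma \ref{udot-absch}), the uniform spatial estimates (Lemmata \ref{funk-absch-Lemma} and \ref{gradient-absch}), and the geometric smoothness of $\Phi$ simultaneously, in order to trade a vanishing time average for the pointwise trace $w^\star$.
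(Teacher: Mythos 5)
There is a genuine gap, and it lies at the very foundation of your argument rather than in any individual step. Your construction of the strong $L^1$-trace $w^\star=\tilde w(0)$ rests on the Lipschitz-in-time estimate $\|w_\varepsilon(\cdot,t_2)-w_\varepsilon(\cdot,t_1)\|_{L^1(\Gamma_0)}\le c|t_2-t_1|$, which you import from Lemma \ref{udot-absch}, i.e.\ from the a priori bounds for the \emph{parabolic regularization}. But Lemma \ref{lem:initialdata} must hold for an \emph{arbitrary} entropy solution in the sense of Definition \ref{def:kruzkoventr}: it is invoked in the proof of Theorem \ref{uniqueness} for both competitors $u$ and $\bar u$, and at that stage one cannot assume that either of them arises as a limit of the viscous approximation --- that is precisely what uniqueness is meant to establish. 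An entropy solution is a priori only an $L^\infty$ function satisfying the integral inequality \eqref{eq:krukoventr}; it carries no $H^{1,1}$ or BV-in-time regularity, so the Lipschitz representative $\tilde w$ and hence the trace $w^\star$ are not available. Your subsequent identification of $w^\star$ with $u_0$ by testing against $\tilde\psi\,\alpha_\delta$ and the Lebesgue-point argument would be fine \emph{if} the trace existed, but the existence of the trace is the whole difficulty, and deriving it from the regularized problem makes the uniqueness proof circular.

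The paper circumvents this by working only with objects every entropy solution possesses: it first tests the weak formulation \eqref{eq:weaksol} with $\phi(x,t)=\theta(t)\chi(\Phi(\cdot,t)^{-1}(x))$, $\theta(t)=(1-t/\epsilon)_+$, to obtain the weak time-averaged convergence \eqref{eq:weakstarconv} for all $\chi\in L^1(\Gamma_0)$; it then tests the entropy inequality \eqref{entropy11} with $\phi=\theta(t)$ and combines the two, so that for the quadratic entropy $\eta(u)=u^2$ the resulting expression $\eta(a)-\eta(b)-\eta'(b)(a-b)$ collapses exactly to $|a-b|^2$. Together with $|\det D\Phi(x,t)|=1+\mathcal O(t)$ this yields $\lim_{\epsilon\to 0}\epsilon^{-1}\int_0^\epsilon\int_{\Gamma_0}|u(\Phi(x,t),t)-u_0(x)|^2=0$ directly, and the pointwise statement off a null set $\mathcal E$ follows from the vanishing of the time average. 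If you want to salvage your route, you would need an intrinsic strong-trace theorem for entropy solutions (in the spirit of the Euclidean results of Vasseur or Panov), which is far heavier machinery than the convexity trick the paper actually uses; alternatively, note that your final interpolation step is also unnecessary in the paper's approach, since the quadratic entropy delivers the $L^2$ statement without passing through $L^1$ first.
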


\begin{proof}
We use that an entropy solution is a weak solution and choose $\phi(x,t) := \theta(t)\chi((\Phi(\cdot,t))^{-1}(x))$ in (\ref{eq:weaksol}),
where $\theta(t)= 1-t/\epsilon$ for $0\leq t \leq \epsilon$ and $\theta(t)=0$ for any other $t$ and where
$\chi\in C^\infty(\Gamma_0)$ is arbitrary. Since $C^\infty(\Gamma_0)$ is dense in $L^1(\Gamma_0)$ we get
\begin{equation}\label{eq:weakstarconv}
\int_{\Gamma_0} u_0(x)\chi(x)
= \lim_{\epsilon\rightarrow 0} \frac{1}{\epsilon} \int_0^\epsilon \int_{\Gamma(t)}  u(x,t) \chi((\Phi(\cdot,t))^{-1}(x))
\end{equation}
for all $\chi\in L^1(\Gamma_0)$.
Now we choose $\phi(x,t):= \theta(t)$ as a test function in  \eqref{entropy11} to get
\begin{equation}
- \frac{1}{\epsilon} \int_0^\epsilon \int_{\Gamma(t)} \eta(u(x,t)) + \int_{\Gamma_0} \eta(u_0(x)) \geq \mathcal O(\epsilon).\nonumber
\end{equation}
From \eqref{eq:weakstarconv} we know that
\begin{equation}
\lim_{\epsilon\rightarrow 0} \frac{1}{\epsilon} \int_0^\epsilon \int_{\Gamma(0)}
	  \eta^\prime( u_0(x))(u(\Phi(x,t),t) |\det D\Phi(x,t)| - u_0(x)     ) =0 \nonumber
\end{equation}
and thus
\begin{equation}
\aligned
\lim_{\epsilon\rightarrow 0} \frac{1}{\epsilon} \int_0^\epsilon \int_{\Gamma_0}& \eta(u(\Phi(x,t),t)) |\det D\Phi(x,t)| -  \eta(u_0(x))  \\
	  &- \eta^\prime( u_0(x)) \left(u(\Phi(x,t),t) |\det D\Phi(x,t)| - u_0(x)     \right) \leq 0.
\endaligned \nonumber
\end{equation}
Choosing $\eta(u)=u^2$ and using the fact that $ |\det D\Phi(x,t)| = 1 + \mathcal O (t)$ we arrive at
\begin{equation}
\aligned
\lim_{\epsilon\rightarrow 0} \frac{1}{\epsilon} \int_0^\epsilon \int_{\Gamma_0}
 |u(\Phi(x,t),t)) -  u_0(x) |^2 =0
\endaligned \nonumber
\end{equation}
which implies our claim.
\end{proof}
For the proof of uniqueness we need  some technical definitions and basic facts from differential geometry.
For a parametrization $\tilde{\psi}: U\rightarrow \tilde{\psi}(U)$ of a subset $\tilde{\psi}(U)\subset\Gamma_0$ with $U\subset \real^n$ open
 we have the following properties.
\begin{enumerate}[a)]
 \item For $t>0$ a parametrization of $\Phi(\tilde{\psi}(U),t)\subset \Gamma(t)$ is given by the map $\psi(\cdot,t):U\rightarrow {\psi}(U)$ where
	  \begin{equation}\label{lem:localfunctions1}
           \psi( x, t) := \Phi(\tilde{\psi}(x),t).
          \end{equation}
          \label{lem:localfunctionsa}
 \item For a function $\varphi\in C^1(G_T)$ the material derivative has the local form
      \begin{equation}
        \dot{\varphi}(\psi(x,t),t) = (\varphi(\psi(x,t),t))_t.
       \end{equation}
       \label{lem:localfunctionsb}
 \item For the local representation $W = W_i \; (\psi_{x_i}(\cdot,t)\circ\psi(\cdot,t)^{-1})$ of a tangential vector field $W$ on $\Gamma(t)$ we have
  \begin{equation}
         (\scp{W}{\nabla_{\Gamma(t)}\varphi}) \circ \psi(\cdot,t) = (W_i\circ\psi(t,\cdot)) (\varphi\circ\psi(\cdot,t))_{x_i}
        \end{equation}
      for all $\varphi \in C^1(\Gamma(t))$.
      \label{lem:localfunctionsc}
 \item For any function $\alpha\in L^1(\Gamma(t))$ we have the following local computation of the integral over a subset
	$\psi(V,t)\subset\Gamma(t)$ where $V\subset U$.
      \begin{equation}
        \int_{\psi(V,t)} \alpha = \int_V \alpha \circ \psi(\cdot,t) \sqrt{ \det( \D\psi(\cdot,t)^T \D\psi(\cdot,t) )}.
       \end{equation}
Here $\D$ denotes the Jacobian operator w.r.t. the spatial coordinate $x\in U\subset \real^n$.
      \label{lem:localfunctionsd}
\end{enumerate}
We introduce a function $\delta\in C^\infty(\real)$ satisfying $\delta\geq 0$, $\supp \delta \subset [-1,1]$  and $\int_\real \delta(\sigma)d\sigma = 1$.
For $h>0$ we set
$\delta_h(\sigma):=\frac{1}{h}\delta(\frac{\sigma}{h})$, { then } $\delta_h\in C^\infty(\real)$,  $\supp \delta_h \subset [-h,h]$,
$\delta_h^\prime \leq \frac{C}{h}$ { and } $\int_\real \delta_h(\sigma)d\sigma = 1$ for some constant $C$.
We need the following two Lemmata whose proofs can be found in \cite{Kru70}.
\begin{lemma}\label{lem:kruzkovlip}
If a function $F(u)$ satisfies a Lipschitz condition on an interval $[-M,M]$ with constant $L$, then the function
$H(u,\bar{u}):=\sign(u-\bar{u})[F(u)-F(\bar{u})]$ also satisfies the Lipschitz condition in $u$ and $\bar{u}$ with the constant $L$.
\end{lemma}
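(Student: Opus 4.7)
The plan is to show that $H$ is Lipschitz with constant $L$ in each of its two arguments separately, which is the form in which the lemma enters Kru\v{z}kov's doubling-of-variables argument. By symmetry it suffices to fix $\bar u$ and prove $|H(u_1,\bar u) - H(u_2,\bar u)| \leq L|u_1 - u_2|$ for all $u_1,u_2\in[-M,M]$. I would proceed by a two-case analysis depending on whether $u_1$ and $u_2$ lie on the same side of $\bar u$ or on opposite sides. The boundary case $u_i=\bar u$ causes no trouble because $H(\bar u,\bar u)=0$ no matter how $\sign(0)$ is defined.

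\emph{Same side.} Suppose $\sign(u_1-\bar u) = \sign(u_2-\bar u) =: s \in\{-1,+1\}$. Then $H(u_i,\bar u)=s\bigl(F(u_i)-F(\bar u)\bigr)$, so $H(u_1,\bar u)-H(u_2,\bar u)=s\bigl(F(u_1)-F(u_2)\bigr)$ and the Lipschitz property of $F$ yields the desired bound with constant $L$ at once.

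\emph{Opposite sides.} This is the only real obstacle. Here the sign factor flips between the two evaluations, and a naive expansion produces $F(u_1)+F(u_2)-2F(\bar u)$, which the Lipschitz property of $F$ does not immediately control with constant $L$. The crucial observation that resolves this is that if, say, $u_2<\bar u<u_1$, then $\bar u$ lies between $u_1$ and $u_2$, so one can split
$H(u_1,\bar u)-H(u_2,\bar u) = \bigl(F(u_1)-F(\bar u)\bigr) - \bigl(F(\bar u)-F(u_2)\bigr)$
and bound the two pieces by $L(u_1-\bar u)$ and $L(\bar u-u_2)$ respectively. Because $\bar u$ is sandwiched, these two distances telescope to $L(u_1-u_2)=L|u_1-u_2|$, so no factor of $2$ is lost and the Lipschitz constant stays exactly $L$.

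Exchanging the roles of $u$ and $\bar u$ and running the identical argument yields Lipschitz continuity in the second argument with the same constant $L$. Thus apart from careful bookkeeping of the sign cases the proof is elementary, and the only delicate point—namely that no constant is lost when the sign of $u-\bar u$ flips—is handled by the telescoping in the opposite-side case.
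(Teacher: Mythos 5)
Your proof is correct. Note that the paper does not actually supply its own argument for this lemma: it is stated with the remark that the proof can be found in Kru\v{z}kov's paper \cite{Kru70}, so there is nothing in the text to compare against line by line. Your argument is the standard one and is complete: the same-side case is immediate, and in the opposite-side case the telescoping $L(u_1-\bar u)+L(\bar u-u_2)=L(u_1-u_2)$ is exactly the point that prevents the constant from degrading to $2L$; the degenerate case $u_i=\bar u$ is correctly dismissed since $H(\bar u,\bar u)=0$ regardless of the convention for $\sign(0)$, and the symmetry $H(u,\bar u)=H(\bar u,u)$ (or your "exchange the roles" remark) handles the second variable. A slightly slicker packaging of the same idea, which you may find worth recording, is the identity $H(u,\bar u)=F(\max(u,\bar u))-F(\min(u,\bar u))$ together with the fact that $u\mapsto\max(u,\bar u)$ and $u\mapsto\min(u,\bar u)$ are each $1$-Lipschitz and, for fixed $\bar u$, satisfy $\left|\max(u_1,\bar u)-\max(u_2,\bar u)\right|+\left|\min(u_1,\bar u)-\min(u_2,\bar u)\right|=\left|u_1-u_2\right|$; this absorbs your case analysis into one line, but it is the same argument.
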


\begin{lemma}\label{lem:kruzkovlebesgue}
Let the function $\bar{u}(x,t)$ be bounded and measurable in some cylinder $B_r(0)\times [0,T]$.
If for some $\rho\in \min(0,(r,T))$ and any number $h\in(0,\rho)$ we set
\begin{equation}
 V_h := \frac{1}{h^{n+1}} \iiiint_{\substack{ \left| \frac{t-\tau}{2}\right|\leq h,\; \rho\leq  \frac{t+\tau}{2}\leq T-\rho, \\
					      \left\| \frac{x-y}{2}\right\|\leq h,\; \left\| \frac{x+y}{2}\right\| \leq r-\rho}}
		|\bar{u}(x,t)-\bar{u}(y,\tau)| dxdtdyd\tau,
\end{equation}
then $\lim_{h\rightarrow 0} V_h = 0$.
\end{lemma}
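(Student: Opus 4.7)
The plan is to reduce the claim to the $L^1$-continuity of translations. The first step is the change of variables
$$s=\tfrac{t+\tau}{2},\quad \sigma=\tfrac{t-\tau}{2},\quad z=\tfrac{x+y}{2},\quad w=\tfrac{x-y}{2},$$
whose Jacobian has constant modulus $2^{n+1}$ and which turns the region of integration into the product $\{|\sigma|\le h\}\times[\rho,T-\rho]\times\{|w|\le h\}\times\{|z|\le r-\rho\}$. In these coordinates
$$V_h = \frac{2^{n+1}}{h^{n+1}} \int_{|\sigma|\le h}\!\int_\rho^{T-\rho}\!\int_{|w|\le h}\!\int_{|z|\le r-\rho}\! |\bar u(z+w,s+\sigma)-\bar u(z-w,s-\sigma)|.$$
Because $h<\rho$, each translated point $(z\pm w,s\pm\sigma)$ still lies in $B_r(0)\times[0,T]$, so the integrand is well defined in the domain where $\bar u$ is given.

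Given $\varepsilon>0$, I will approximate $\bar u$ in $L^1(B_r(0)\times[0,T])$ by a continuous function $\bar u_\varepsilon$ with $\|\bar u-\bar u_\varepsilon\|_{L^1}<\varepsilon$. Inserting $\pm\bar u_\varepsilon(z\pm w,s\pm\sigma)$ and applying the triangle inequality gives $V_h\le I_1+I_2+I_3$, where $I_1$ and $I_3$ involve $|\bar u-\bar u_\varepsilon|$ evaluated at a translated argument and $I_2$ involves $|\bar u_\varepsilon(z+w,s+\sigma)-\bar u_\varepsilon(z-w,s-\sigma)|$.

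For $I_1$ (and, symmetrically, $I_3$), I exchange the order of integration and for each fixed $(w,\sigma)$ substitute $z'=z+w$, $s'=s+\sigma$ in the inner integral. The buffer condition $h<\rho$ guarantees that the image of $\{|z|\le r-\rho\}\times[\rho,T-\rho]$ is contained in $B_r(0)\times[0,T]$, so the inner integral is at most $\|\bar u-\bar u_\varepsilon\|_{L^1}<\varepsilon$. The outer $(w,\sigma)$-volume is bounded by $C_n h^{n+1}$, and the prefactor $h^{-(n+1)}$ cancels it, yielding $I_1,I_3\le C\varepsilon$. For $I_2$, the function $\bar u_\varepsilon$ is uniformly continuous on the compact cylinder $\overline{B_r(0)}\times[0,T]$ with some modulus $\omega_\varepsilon$, and since the two arguments differ by at most $2\sqrt 2\,h$ we get
$$|\bar u_\varepsilon(z+w,s+\sigma)-\bar u_\varepsilon(z-w,s-\sigma)|\le\omega_\varepsilon(2\sqrt{2}\,h)\longrightarrow 0 \quad\text{as } h\to 0,$$
so $I_2\le C\,\omega_\varepsilon(2\sqrt{2}\,h)$. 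Combining, $\limsup_{h\to 0} V_h\le C\varepsilon$, and letting $\varepsilon\to 0$ finishes the proof. The only delicate point is the buffer inequality $h<\rho$, whose role is exactly to keep the four translated evaluation points inside the cylinder on which $\bar u$ lives; once this is secured, the argument is a standard density/continuity-of-translation maneuver.
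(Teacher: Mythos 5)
Your proof is correct. There is nothing in the paper to compare it against: the authors state this lemma without proof and refer to Kru\v{z}kov's original article \cite{Kru70}. Your argument --- passing to the variables $s=\frac{t+\tau}{2}$, $\sigma=\frac{t-\tau}{2}$, $z=\frac{x+y}{2}$, $w=\frac{x-y}{2}$ (Jacobian factor $2^{n+1}$, product region), using the buffer $h<\rho$ to keep the four evaluation points inside the cylinder, and then splitting off an $L^1$-close continuous approximant so that the two error terms are bounded by $\|\bar u-\bar u_\varepsilon\|_{L^1}$ after the $(w,\sigma)$-volume $C_nh^{n+1}$ cancels the prefactor $h^{-(n+1)}$, while the middle term is controlled by the modulus of uniform continuity at scale $2\sqrt2\,h$ --- is the standard continuity-of-translations proof and is complete; the final $\limsup_{h\to0}V_h\le C\varepsilon$ for every $\varepsilon>0$ gives the claim since $V_h\ge0$. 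For the record, Kru\v{z}kov's own proof of this statement runs slightly differently: it invokes the Lebesgue differentiation theorem, so that for almost every $(z,s)$ the inner average of $|\bar u(z+w,s+\sigma)-\bar u(z-w,s-\sigma)|$ over the small $(w,\sigma)$-region tends to zero, and then concludes by dominated convergence using the boundedness of $\bar u$. The two routes are interchangeable here; yours avoids the differentiation theorem at the cost of choosing an approximating sequence, and both deliver exactly the statement as used later in the uniqueness proof.
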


\begin{theorem}\label{uniqueness} Assume Assumptions \ref{ass1} and $\nabla_{\Gamma(t)}\cdot v \in L^\infty(G_T)$ and
 $\dot{\f}$ and $\nabla_{\Gamma(t)}\f$ are Lipschitz continuous with respect to $u$.
 Then the  Kruzkov entropy solution of Definition \ref{def:kruzkoventr} is unique.
\end{theorem}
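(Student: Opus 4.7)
The plan is to apply the classical Kruzkov doubling-of-variables technique, adapted to the moving surface setting by pulling back to local charts over $\Gamma_0$ via the diffeomorphism $\Phi$. I would take two Kruzkov entropy solutions $u,\bar u$ of \eqref{pde} with the same initial datum $u_0$, fix a finite atlas $\{\tilde\psi_\alpha:U_\alpha\to\Gamma_0\}$ and a subordinate partition of unity, and by linearity work inside one chart. Set $\psi(x,t):=\Phi(\tilde\psi(x),t)$ as in \eqref{lem:localfunctions1}, and use properties \ref{lem:localfunctionsa}--\ref{lem:localfunctionsd} to rewrite \eqref{eq:krukoventr} as a Kruzkov-type inequality on the Euclidean domain $U\subset\R^n$, with the volume factor $J(x,t):=\sqrt{\det(\D\psi(x,t)^T\D\psi(x,t))}$ appearing as a smooth strictly positive weight, the scalar $\nabla_\Gamma\cdot v$ as a bounded coefficient, and the local representative $F(x,t,u)$ of $f(\psi(x,t),t,u)$ smooth in $(x,t)$ and Lipschitz in $u$ by the hypotheses of Theorem~\ref{uniqueness}.

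Next I would duplicate the variables to $(x,t),(y,\tau)\in U\times(0,T)$ and choose the test function
$$\varphi(x,t,y,\tau)=\phi\!\left(\tfrac{x+y}{2},\tfrac{t+\tau}{2}\right)\delta_h\!\left(\tfrac{t-\tau}{2}\right)\prod_{i=1}^n\delta_h\!\left(\tfrac{x_i-y_i}{2}\right),$$
with $\phi\geq 0$ smooth and compactly supported in $U\times(0,T)$. Apply the transformed entropy inequality for $u$ in the $(x,t)$-variables with constant $k=\bar u(\psi(y,\tau),\tau)$ and test $\varphi(\cdot,\cdot,y,\tau)$, integrate over $(y,\tau)$, and repeat the construction symmetrically for $\bar u$ in the $(y,\tau)$-variables. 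Since $\partial_t\delta_h+\partial_\tau\delta_h=0$ and $\partial_{x_i}\delta_h+\partial_{y_i}\delta_h=0$, the derivatives hitting the mollifier cancel when the two inequalities are added; only derivatives of $\phi$ evaluated at the midpoints survive as principal terms, together with lower-order differences of the coefficients $J,\,\nabla_\Gamma\cdot v$ and $F$ at the two nearby points.

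The limit $h\to 0$ would then be carried out using Lemma~\ref{lem:kruzkovlip} to control $\sign(u-\bar u)[F(\cdot,u)-F(\cdot,\bar u)]$ by $L|u-\bar u|$, and Lemma~\ref{lem:kruzkovlebesgue} to show that the diagonal averages converge to integrals of $|u-\bar u|$ on $\{x=y,t=\tau\}$. After summing the local inequalities via the partition of unity and returning to $G_T$, I expect to obtain, for every nonnegative $\phi\in C^1_c(G_T)$,
$$\int_0^T\!\!\int_{\Gamma(t)}|u-\bar u|\dot\phi + \sign(u-\bar u)(f(\cdot,u)-f(\cdot,\bar u))\cdot\nabla_\Gamma\phi \geq -C\int_0^T\!\!\int_{\Gamma(t)}|u-\bar u|\phi,$$
with $C$ depending on $\|\nabla_\Gamma\cdot v\|_{L^\infty(G_T)}$ and on the Lipschitz constants of $\dot f$ and $\nabla_{\Gamma(t)}f$ in $u$. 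A standard time-localization of $\phi$, combined with Lemma~\ref{lem:initialdata} which identifies the initial trace of $|u-\bar u|$ as zero, yields
$$\int_{\Gamma(t)}|u(\cdot,t)-\bar u(\cdot,t)|\leq C\int_0^t\int_{\Gamma(s)}|u-\bar u|$$
for almost every $t\in(0,T)$, and Gronwall's inequality then forces $u=\bar u$ a.e. on $G_T$.

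The hardest step will be the bookkeeping of the geometric correction terms produced by the doubling. The Jacobian weight $J(x,t)$, the divergence $\nabla_\Gamma\cdot v$ and the local coefficients of $\nabla_\Gamma\cdot F$ all appear evaluated at nearby but distinct points $(x,t)$ and $(y,\tau)$, so one must verify that their differences contribute only Lipschitz-in-$(u,\bar u)$ errors of size $O(h)$ that vanish in the limit. This is precisely where the assumption that $\dot f$ and $\nabla_{\Gamma(t)}f$ are Lipschitz continuous in $u$ enters; everything else in the argument is routine once these geometric defect terms are controlled.
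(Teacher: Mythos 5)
Your proposal follows the same backbone as the paper's proof: pull back to a chart via $\psi(x,t)=\Phi(\tilde\psi(x),t)$, double the variables with the mollified midpoint test function, cancel the mollifier derivatives by symmetry, and pass to the diagonal using Lemma \ref{lem:kruzkovlip} and Lemma \ref{lem:kruzkovlebesgue}, with Lemma \ref{lem:initialdata} supplying the initial trace. Where you genuinely diverge is the endgame. The paper never globalizes the local Kato inequality \eqref{eq:kruzkovcontracted} by a partition of unity; instead it inserts the cone-of-dependence cutoff $\zeta_\epsilon(x,t)=1-a_\epsilon(\|x\|+Lt-r+\epsilon)$ built from the local Lipschitz constant $L$ of the flux in \eqref{eq:lipschitzf}, obtains the local $L^1$-contraction \eqref{eq:localconstraction} on the shrinking balls $S_t$, and then covers $\Gamma_0$ by finitely many such charts and iterates in time steps of length $\tilde t_{\min}$. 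Your route (sum the chart-local inequalities against a partition of unity, then one global Gronwall on the compact surface) is legitimate and arguably shorter, but you should be explicit about the one step you wave at: the contracted inequality is derived with a chart-dependent weight (the paper's version carries $\det(g(x,t))$, i.e.\ an extra factor of $\sqrt{\det g}$ beyond the invariant surface measure) and with the divergence-defect term ${\f_i}_{x_i}(x,t,u)-{\f_i}_{x_i}(x,t,\bar u)$ written in local coordinates, so before summing over charts one must check that these expressions patch into chart-independent quantities and that the $\nabla\chi_\alpha$ cross terms cancel because the flux contribution is tensorial and $\sum_\alpha\chi_\alpha\equiv 1$. That bookkeeping is routine but not free. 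In exchange, the paper's localization buys more than uniqueness: \eqref{eq:localconstraction} is a genuine finite-speed-of-propagation statement, whereas your global Gronwall only delivers $u=\bar u$ a.e.\ on the compact $G_T$. Both arguments use the Lipschitz hypotheses on $\dot\f$ and $\nabla_{\Gamma(t)}\f$ in exactly the same place, namely to control the $R_1$, $R_2$ and $Q$ defect terms in the doubled inequality as $h\to 0$.
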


\begin{proof}
The proof we give here is in the same spirit as Kruzkov's uniqueness proof \cite{Kru70} and can be seen as its extension to moving surfaces.
Let $u,\bar{u}$ be two Kruzkov entropy solutions with initial data $u_0,\bar{u}_0$.
Furthermore let $\psi$ be as in  
\eqref{lem:localfunctions1}
with $\psi(\cdot,t):B_R(0)\rightarrow \psi(B_R(0),t)$
where $B_R(0)\subset\real^n$ and  $\psi(B_R(0),t)\subset \Gamma(t)$. Choose normal coordinates for instance.
For some $0<r<R$ we define $\Omega:= B_r(0)$ and $\Omega_T:= B_r(0) \times [0,T]$.
Let now $\varphi\in C^\infty(G_T)$ be a test function with $\varphi\geq 0$ and
$\supp \varphi \Subset \bigcup_{t\in (\rho,T-2\rho)} \psi(\Omega,t)\times \{t\}$ where $\rho>0$ is a small
real number. We know because of \ref{lem:localfunctionsa})\ --\ \ref{lem:localfunctionsd}) that then
\begin{equation}\label{eq:kruzkovlocal}
\aligned
 \int_{G_T} & \abs{u(x,t)-k} \dot{\varphi} - \sign(u(x,t) - k ) \;k\; \nabla_{\Gamma(t)} \cdot v(x,t) \;\varphi (x,t)\\
&+ \sign(u(x,t)-k) \scp{ (\f((x,t),u(x,t))-\f((x,t),k))} { \nabla_{\Gamma(t)} \varphi(x,t) }  \\
&+ \int_{\Gamma_0} \abs{u_0(x)-k} \varphi(x,0) \\
=  \int_0^T \int_{\Omega} & [\abs{u(\psi(x,t),t)-k} (\varphi(\psi(x,t),t)_t\\
 &- \sign(u(\psi(x,t),t) - k ) \;k\;  (\nabla_{\Gamma(t)} \cdot v)(\psi(x,t),t)  \;\varphi (\psi(x,t),t)\\
&+ \sign(u(\psi(x,t),t)-k)\\
&\quad \cdot (\f_i((\psi(x,t),t),u(\psi(x,t),t))-\f_i((\psi(x,t),t),k)) ( \varphi(\psi(x,t),t))_{x_i}]
 \sqrt{ \det( g(x,t) )}
\endaligned
\end{equation}
where $\f_i$ is locally defined by $\f = \f_i \cdot (\psi_{x_i}(\cdot,t)\circ\psi(\cdot,t)^{-1})$ and
$g(x,t) := \D\psi(x,t)^T \D\psi(x,t)$, where $\D\psi(x,t)$ denotes the Jacobian of $\psi(x,t)$ with respect to $x$.
For better readability we will suppress the composition with
$\psi(\cdot,t)$ in the following, i.e. we introduce new functions which live on $\Omega\times[0,T]$
and which we mainly again denote by the names of the original functions. By this we mean to do the following replacements.
$u(\psi(x,t),t) \rightsquigarrow u(x,t)$, $\varphi(\psi(x,t),t) \rightsquigarrow \varphi(x,t)$,
$ (\nabla_{\Gamma(t)} \cdot w)(\psi(x,t),t)\rightsquigarrow q(x,t)$
and $\f_i((\psi(x,t),t),\cdot)\rightsquigarrow \f_i(x,t,\cdot)$.

In Definition \ref{def:kruzkoventr} we choose a test function
$$\tilde \varphi = \tilde \varphi(x,t,y,\tau) \geq 0 \quad \mbox{with} \quad
\supp\tilde \varphi \Subset (\bigcup_{t\in (\rho,T-2\rho)} \psi(\Omega,t)\times \{t\})^2,$$ set $k=\bar{u}(y,\tau)$,
multiply with $\sqrt{ \det( g(y,\tau) )}$ and integrate over $\Omega_T:=\Omega\times[0,T]$ with respect to $(y,\tau)$. Using \eqref{eq:kruzkovlocal}
we arrive at
\begin{equation}\label{eq:kruzkovdoubleu}
\aligned
\int_{\Omega_T^2} & [\abs{u(x,t)-\bar{u}(y,\tau)} \tilde\varphi_t(x,t,y,\tau)\\
 &- \sign(u(x,t) - \bar{u}(y,\tau) ) \;\bar{u}(y,\tau)\;  q(x,t)  \;\tilde\varphi (x,t,y,\tau)\\
&+ \sign(u(x,t)-\bar{u}(y,\tau)) (\f_i(x,t,u(x,t))-\f_i(x,t,\bar{u}(y,\tau))) \tilde\varphi_{x_i}(x,t,y,\tau)]\\
& \sqrt{ \det( g(x,t) )} \sqrt{ \det( g(y,\tau) )} \geq 0.
\endaligned
\end{equation}
Proceeding analogously for the corresponding version of \eqref{eq:krukoventr} for the entropy solution $\bar{u}=\bar{u}(y,\tau)$ we get
\begin{equation}\label{eq:kruzkovdoublev}
\aligned
\int_{\Omega_T^2} & [\abs{u(x,t)-\bar{u}(y,\tau)} \tilde\varphi_\tau(x,t,y,\tau)\\
 &+ \sign(u(x,t) - \bar{u}(y,\tau) ) \;u(x,t)\;  q(y,\tau)  \;\tilde\varphi (x,t,y,\tau)\\
&+ \sign(u(x,t)-\bar{u}(y,\tau)) (\f_i(y,\tau,u(x,t))-\f_i(y,\tau,\bar{u}(y,\tau)))  \tilde\varphi_{y_i}(x,t,y,\tau)]\\
& \sqrt{ \det( g(x,t) )} \sqrt{ \det( g(y,\tau) )} \geq 0.
\endaligned
\end{equation}
Summing up \eqref{eq:kruzkovdoubleu} and \eqref{eq:kruzkovdoublev} one sees
\begin{equation}\label{eq:kruzkovdoubleuv}
\aligned
\int_{\Omega_T^2} & [\abs{u(x,t)-\bar{u}(y,\tau)} (\tilde\varphi_t(x,t,y,\tau) + \tilde\varphi_\tau(x,t,y,\tau))\\
&+ \sign(u(x,t)-\bar{u}(y,\tau)) (\f_i(x,t,u(x,t))-\f_i(y,\tau,\bar{u}(y,\tau))) (  \tilde\varphi_{x_i}(x,t,y,\tau) +  \tilde\varphi_{y_i}(x,t,y,\tau))\\
&+ \underbrace{\sign(u(x,t)-\bar{u}(y,\tau)) (\f_i(y,\tau,\bar{u}(y,\tau))-\f_i(x,t,\bar{u}(y,\tau))) \tilde\varphi_{x_i}(x,t,y,\tau)}_{=:R_{1}} \\
&+ \underbrace{\sign(u(x,t)-\bar{u}(y,\tau)) (\f_i(y,\tau,u(x,t))-\f_i(x,t,u(x,t))) \tilde\varphi_{y_i}(x,t,y,\tau)}_{=:R_{2}} \\
&+ \underbrace{\sign(u(x,t) - \bar{u}(y,\tau) ) (u(x,t)  q(y,\tau) - \bar{u}(y,\tau) q(x,t))}_{=:Q}  \tilde\varphi (x,t,y,\tau)]\\
& \sqrt{ \det( g(x,t) )} \sqrt{ \det( g(y,\tau) )} \geq 0.
\endaligned
\end{equation}
For a test function $\varphi=\varphi(x,t)\geq 0$ satisfying $\supp\varphi \Subset \bigcup_{t\in (\rho,T-2\rho)} \psi(\Omega,t)\times \{t\}$
we set (in local coordinates)
\begin{equation}\label{eq:defphitilde}
\tilde \varphi(x,t,y,\tau) := \varphi\left(\frac{x+y}{2},\frac{t+\tau}{2}\right) \delta_h\left( \frac{t-\tau}{2} \right )
  \prod_{i=1}^n \delta_h\left (\frac{x_i-y_i}{2}   \right) \ =: \ \varphi(\cdots) \lambda_h \avdots
\end{equation}
where
\begin{equation}
(\cdots) =  \left(\frac{x+y}{2},\frac{t+\tau}{2}\right), \qquad \avdots = \left(\frac{x-y}{2},\frac{t-\tau}{2}\right)
\end{equation}
and $h$ is sufficiently small, such that $\supp\tilde \varphi \Subset (\bigcup_{t\in (\rho,T-2\rho)} \psi(\Omega,t)\times \{t\})^2 $.
For the partial derivatives of $\tilde \varphi$ the following identities are trivial.
\begin{equation}
 \aligned
\tilde\varphi_t + \tilde\varphi_\tau &= \varphi_t(\cdots) \lambda_h\avdots, \\
\tilde\varphi_{x_i} + \tilde\varphi_{y_i} &= \varphi_{x_i}(\cdots) \lambda_h \avdots.
\endaligned
\end{equation}
The major part of the proof will be to see that with $\tilde\varphi$ as in \eqref{eq:defphitilde} the following inequality is
obtained from \eqref{eq:kruzkovdoubleuv} for $h\rightarrow 0$:
\begin{equation}\label{eq:kruzkovcontracted}
\aligned
\int_{\Omega_T} & [\abs{u(x,t)-\bar{u}(x,t)} \varphi_t(x,t) \\
&+ \sign(u(x,t)-\bar{u}(x,t)) (\f_i(x,t,u(x,t))-\f_i(x,t,\bar{u}(x,t))) \varphi_{x_i}(x,t)\\
&- \sign(u(x,t)-\bar{u}(x,t)) ({\f_i}_{x_i}(x,t,u(x,t))-{\f_i}_{x_i}(x,t,\bar{u}(x,t))) \varphi(x,t)\\
&+|u(x,t) - \bar{u}(x,t) | q(x,t) \varphi (x,t)] \det( g(x,t) )  \geq 0.\\
\endaligned
\end{equation}
In order to prove \eqref{eq:kruzkovcontracted} we define a function
\begin{equation}
 \aligned
P_h(x,t,y,\tau) :=& F(x,t,y,\tau,u(x,t),\bar{u}(y,\tau)) \lambda_h\avdots\\
:=&[\abs{u(x,t)-\bar{u}(y,\tau)} \varphi_t(\cdots) \\
&+ \sign(u(x,t)-\bar{u}(y,\tau)) (\f_i(x,t,u(x,t))-\f_i(y,\tau,\bar{u}(y,\tau))) \varphi_{x_i}(\cdots) \\
&+ Q \varphi (\cdots)] \sqrt{ \det( g(x,t) )} \sqrt{ \det( g(y,\tau) )} \; \lambda_h\avdots.
\endaligned
\end{equation}
From \eqref{eq:kruzkovdoubleuv} we deduce
\begin{equation}
 \aligned
0 \leq& \int_{\Omega_T^2} F(x,t,y,\tau,u(x,t),\bar{u}(y,\tau)) \lambda_h\avdots \\
     &+\int_{\Omega_T^2} [R_1 + R_2]  \sqrt{ \det( g(x,t) )} \sqrt{ \det( g(y,\tau) )}\\
     =&\int_{\Omega_T^2} [F(x,t,y,\tau,u(x,t),\bar{u}(y,\tau)) - F(x,t,x,t,u(x,t),\bar{u}(x,t))] \lambda_h\avdots \\
     &+\int_{\Omega_T^2}  F(x,t,x,t,u(x,t),\bar{u}(x,t)) \lambda_h\avdots \\
     &+\int_{\Omega_T^2} [R_1 + R_2]  \sqrt{ \det( g(x,t) )} \sqrt{ \det( g(y,\tau) )} = : J_1(h) + J_2 + J_3(h).
\endaligned
\end{equation}
We mention that due to Lemma \ref{lem:kruzkovlip} and since $\bar\Omega_T\subset B_R(0)\times [0,T]$ and $F$ is defined on $B_R(0)\times [0,T]$,
obviously $F$ is Lipschitz continuous on $\Omega_T$ in all its arguments. We then use the fact that $|\lambda_h\avdots|\leq C h^{-(n+1)}$
and Lemma \ref{lem:kruzkovlebesgue} to see that
\begin{equation}
 \aligned
|J_1(h)| &\leq C \left(h + \int_{\Omega_T^2}   |\bar{u}(y,\tau) - \bar{u}(x,t)|\lambda_h\avdots\right) \rightarrow 0 \\
\endaligned
\end{equation}
for $h\rightarrow 0$ since $\bar{u}$ is bounded and measurable.
By substitution we get for the second term
\begin{equation}
\aligned
 J_2 &= \int_{\Omega_T} F(x,t,x,t,u(x,t),\bar{u}(x,t)) \underbrace{\int_{\Omega_T}   \lambda_h\left(\frac{x-y}{2},\frac{t-\tau}{2}\right) \;
\ }_{=2^{n+1}} \ \\
     &= 2^{n+1} \int_{\Omega_T} F(x,t,x,t,u(x,t),\bar{u}(x,t))   \ dx \ dt.
     \endaligned
\end{equation}
Now we turn to the third term
\begin{equation}
 \aligned
J_3(h) =& \int_{\Omega_T^2} \Big[  \sign(u(x,t)-\bar{u}(y,\tau)) (\f_i(y,\tau,\bar{u}(y,\tau))-\f_i(x,t,\bar{u}(y,\tau)))\\
	&\qquad\qquad\cdot\left(\frac{1}{2}\varphi_{x_i}(\cdots) \lambda_h \avdots +  \varphi(\cdots) \left(\lambda_h \avdots\right)_{x_i}\right)\\
&+ \sign(u(x,t)-\bar{u}(y,\tau)) (\f_i(y,\tau,u(x,t))-\f_i(x,t,u(x,t)))  \\
	&\qquad\qquad\cdot\left(\frac{1}{2}\varphi_{y_i}(\cdots) \lambda_h \avdots +  \varphi(\cdots) \left(\lambda_h \avdots\right)_{y_i}\right)\Big]\\
& \sqrt{ \det( g(x,t) )} \sqrt{ \det( g(y,\tau) )}.
\endaligned
\end{equation}
Here, we notice that those summands of the above integral that contain $\varphi_{x_i}(\cdots) \lambda_h \avdots$ or $\varphi_{y_i}(\cdots) \lambda_h \avdots$
as a factor in the integrand vanish for $h\rightarrow 0$. Therefore, it suffices to show that
\begin{equation}
 \aligned
\hat J_3(h) :=& \int_{\Omega_T^2} \sign(u(x,t)-\bar{u}(y,\tau)) \Big[   (\f_i(y,\tau,\bar{u}(y,\tau))-\f_i(x,t,\bar{u}(y,\tau))) \left(\lambda_h \avdots\right)_{x_i}\\\\
&+ (\f_i(y,\tau,u(x,t))-\f_i(x,t,u(x,t)))
	 \left(\lambda_h \avdots\right)_{y_i}\Big]\hat\varphi(x,t,y,\tau)\\
\endaligned
\end{equation}
converges to
\begin{equation}
-2^{n+1} \int_{\Omega_T} \sign(u(x,t)-\bar{u}(x,t)) ({\f_i}_{x_i}(x,t,u(x,t))-{\f_i}_{x_i}(x,t,\bar{u}(x,t))) \varphi(x,t)\det( g(x,t) )\\
\end{equation}
for $h\rightarrow 0$, where $\hat\varphi(x,t,y,\tau) :=\varphi(\cdots) \sqrt{ \det( g(x,t) )} \sqrt{ \det( g(y,\tau) )}$.
For a better readability we write $\lambda_{x_i}$ instead of $\Big(\lambda_h \avdots\Big)_{x_i}$ and analogously $\lambda_{y_i}$.
Due to the local Lipschitz continuity of $\f_i$ on $\Omega_T$ we have
\begin{equation}
 \aligned
\left(\f_i(y,\tau,\bar{u}(y,\tau)) - \f_i(x,t,\bar{u}(y,\tau)) \right) \lambda_{x_i}
= {\f_i}_{\tau}(y,\tau,\bar{u}(y,\tau)) (\tau-t) \lambda_{x_i} \\+ {\f_i}_{y_j}(y,\tau,\bar{u}(y,\tau)) (y_j-x_j) \lambda_{x_i} + \epsilon_i \lambda_{x_i}
\endaligned
\end{equation}
with $\frac{\epsilon_i}{d}\rightarrow 0$ for $d:=\|x-y\|+|t-\tau| \rightarrow 0$ where $\|\cdot \|$ denotes the Euclidean norm in $\real^n$.
Analogously, with $\lambda_{x_i} = - \lambda_{y_i}$ we get
\begin{equation}
 \aligned
\left(\f_i(y,\tau,u(x,t)) - \f_i(x,t,u(x,t)) \right) \lambda_{y_i}
= - {\f_i}_{\tau}(y,\tau,u(x,t)) (\tau-t) \lambda_{x_i} \\- {\f_i}_{y_j}(y,\tau,u(x,t)) (y_j-x_j) \lambda_{x_i} + \beta_i \lambda_{x_i}
\endaligned
\end{equation}
with $\frac{\beta_i}{d}\rightarrow 0$ for $d \rightarrow 0$.
Thus,
\begin{equation}
 \aligned
\hat J_3(h)& = \int_{\Omega_T^2} \hat\varphi(x,t,y,\tau)\sign(u(x,t)-\bar{u}(y,\tau))
		     ({\f_i}_\tau(y,\tau,\bar{u}(y,\tau))-{\f_i}_\tau(y,\tau,u(x,t))) (\tau - t ) \lambda_{x_i} \\
&+ \int_{\Omega_T^2} \hat\varphi(x,t,y,\tau)\sign(u(x,t)-\bar{u}(y,\tau))
		     ({\f_i}_{y_j}(y,\tau,\bar{u}(y,\tau))-{\f_i}_{y_j}(y,\tau,u(x,t))) (y_j - x_j ) \lambda_{x_i} \\
&+ \int_{\Omega_T^2} \hat\varphi(x,t,y,\tau)\sign(u(x,t)-\bar{u}(y,\tau))   (\epsilon_i + \beta_i ) \lambda_{x_i} =: I_1 + I_2 + I_3\\
\endaligned
\end{equation}
Obviously, $|\lambda_{x_i}| \leq C h^{-(n+2)}$ and using Lemma \ref{lem:kruzkovlebesgue} we get
$|I_3| \rightarrow 0$ as $h\rightarrow 0$.
Since $\sqrt{\det(g)}$ is Lipschitz continuous on $\Omega_T$ we have
\begin{equation}
 |\hat \varphi (x,t,y,\tau) - \hat \varphi (x,t,x,t) | \leq C (\|x-y\| + |t-\tau|)
\end{equation}
and obtain
\begin{equation}
 \aligned
I_1 = \int_{\Omega_T^2}[ \hat\varphi(y,\tau,y,\tau)\sign(u(x,t)-\bar{u}(y,\tau))
		     ({\f_i}_\tau(y,\tau,\bar{u}(y,\tau))-{\f_i}_\tau(y,\tau,u(x,t))) (\tau - t ) \lambda_{x_i}] \; + \gamma_1(h)\\
\endaligned
\end{equation}
with $\gamma_1(h)\rightarrow 0$ for $h\rightarrow 0$. Defining
\begin{equation}
 F_i(y,\tau,u(x,t),\bar{u}(y,\tau)) := \hat\varphi(y,\tau,y,\tau)\sign(u(x,t)-\bar{u}(y,\tau)) ({\f_i}_\tau(y,\tau,\bar{u}(y,\tau))-{\f_i}_\tau(y,\tau,u(x,t)))
\end{equation}
we see with Lemma \ref{lem:kruzkovlip} that $F_i$ is Lipschitz continuous in $u$ on $\Omega_T$ and obtain according to Lemma \ref{lem:kruzkovlebesgue}
\begin{equation}
 \aligned
|I_1|& \leq\left| \int_{\Omega_T^2}  F_i(y,\tau,u(x,t),\bar{u}(y,\tau)) ((\tau - t ) \lambda)_{x_i}\right|  \; +\; |\gamma_1(h)| \\
&= \left|\int_{\Omega_T^2}  (F_i(y,\tau,u(x,t),\bar{u}(y,\tau)) - F_i(y,\tau,u(y,\tau),\bar{u}(y,\tau))) ((\tau - t ) \lambda)_{x_i} \right| \; + \;|\gamma_1(h)| \\
&\leq C \int_{\Omega_T^2} |u(x,t) - u(y,\tau) | \underbrace{|(\tau - t )|}_{\leq h} \underbrace{|\lambda_{x_i}|}_{\leq Ch^{-(n+2)}} \; + \;|\gamma_1(h)| \; \rightarrow 0
\endaligned
\end{equation}
for $h\rightarrow 0$. Considering $I_2$ we have
\begin{equation}
 \aligned
I_2& =\int_{\Omega_T^2} \hat\varphi(x,t,y,\tau)\sign(u(x,t)-\bar{u}(y,\tau))
		     ({\f_i}_{y_j}(y,\tau,\bar{u}(y,\tau))-{\f_i}_{y_j}(y,\tau,u(x,t))) ((y_j - x_j ) \lambda)_{x_i} \\
&+\int_{\Omega_T^2} \hat\varphi(x,t,y,\tau)\sign(u(x,t)-\bar{u}(y,\tau))
		     ({\f_i}_{y_i}(y,\tau,\bar{u}(y,\tau))-{\f_i}_{y_i}(y,\tau,u(x,t)))  \lambda) =: I_{2.1} + I_{2.2}\\
\endaligned
\end{equation}
We see that $I_{2.2}$ converges for $h\rightarrow 0 $ to
\begin{equation}
 2^{n+1}\int_{\Omega_T} \sign(u(x,t)-\bar{u}(x,t)) ({\f_i}_{x_i}(x,t,\bar{u}(x,t))-{\f_i}_{x_i}(x,t,u(x,t))) \varphi(x,t)\det( g(x,t) ),
\end{equation}
whereas one can see analogously to $I_1$ that $I_{2.1}$ converges to zero.
Thus, we conclude that
\begin{equation}
\aligned
 0\leq& \lim_{h\rightarrow 0}\left[ J_1(h) + J_2 + J_3(h)\right] = 2^{n+1}\int_{\Omega_T} F(x,t,x,t,u(x,t),\bar{u}(x,t))\\
&-2^{n+1}\int_{\Omega_T} \sign(u(x,t)-\bar{u}(x,t)) ({\f_i}_{x_i}(x,t,u(x,t))-{\f_i}_{x_i}(x,t,\bar{u}(x,t))) \varphi(x,t)\det( g(x,t) )
\endaligned
\end{equation}
and thereby \eqref{eq:kruzkovcontracted}.
In order to continue the proof of the theorem we introduce the following definitions.
Let $\mathcal E_u\subset[0,T]$ be of
$\mathcal L^1$-measure zero, such that $u(\Phi(\cdot,t),t)\rightarrow u_0$ in $L^1(\Gamma_0)$ for $t\rightarrow 0$, $t\in [0,T]\backslash\mathcal E_u$.
Let $\mathcal E_v$ be defined analogously. These sets exist because of Lemma \ref{lem:initialdata}. We set
\begin{equation}
 \mu(t) := \int_{S_t} | u(x,t) - \bar{u}(x,t) |\; \det(g(x,t)),
\end{equation}
where $S_t:= \left\{ x\in B_r(0) \big| ||x|| \leq r-Lt\right\} $ and
\begin{equation}\label{eq:lipschitzf}
 L:= \max_{\substack{(x,t)\in \bar \Omega_T,\\ w \leq \max(\|u\|_{L^\infty},\|\bar{u}\|_{L^\infty}) }}
		  \left(  \sum_{i=1}^n \partial_u\f_i^2(x,t,w)\right)^{\frac{1}{2}}
\end{equation}
By $\mathcal E_\mu\subset [0,T]$ we denote those points that are not Lebesgue points of the bounded and measurable function $\mu$ and set
$\mathcal E_0:=\mathcal E_u \cup \mathcal E_{\bar{u}} \cup \mathcal E_\mu$. Obviously, $\mathcal L^1 (\mathcal E_0) = 0$. We define
\begin{equation}
 a_h(\sigma) := \int_{-\infty}^\sigma \delta_h(\xi)d\xi
\end{equation}
as a regularization of the Heavyside function and see $a_h^\prime(\sigma) = \delta_h(\sigma)\geq 0$.
Let now $\tau_1,\tau_2\in(0,T)\backslash \mathcal E_0$ with $\tau_1<\tau_2$.
In order to prove a contraction property we choose the following test function whose definition is given in local coordinates by
\begin{equation}\label{eq:defphi}
\varphi(x,t):= [a_h(t-\tau_1) - a_h(t-\tau_2)]\zeta(x,t),
\end{equation}
where
\begin{equation}
 \zeta(x,t) := \zeta_\epsilon(x,t) := 1 - a_\epsilon( \|x\| + Lt - r + \epsilon ) , \qquad \epsilon>0
\end{equation}
for $(x,t) \in \Omega_T$ and zero outside.
Hence, $\supp\zeta(\cdot,t) \subset S_t \subset B_r(0)$.
We compute the derivatives of $\zeta$ as
\begin{equation}
 \zeta_t(x,t) :=-L \underbrace{a^\prime_\epsilon( \|x\| + Lt - r + \epsilon ) }_{\geq 0} \text{ and }
\nabla\zeta(x,t) := - a^\prime_\epsilon( \|x\| + Lt - r + \epsilon ) \frac{x}{\|x\|}
\end{equation}
and due to the definition of $L$ we conclude with the Cauchy Schwartz inequality
\begin{equation}
 0=\zeta_t(x,t) + L \| \nabla\zeta(x,t) \| \geq \zeta_t(x,t) + \frac{\f_i(x,t,u(x,t)) - \f_i(x,t,\bar{u}(x,t))}{u(x,t) - \bar{u}(x,t)} \zeta_{x_i}(x,t).
\end{equation}
If we choose the function $\varphi$ from \eqref{eq:defphi} as a test function in \eqref{eq:kruzkovcontracted} we get
\begin{equation}
\aligned
 &\int_{\Omega_T}[\delta_h(t-\tau_1) - \delta_h(t-\tau_2)]\zeta(x,t) |u(x,t) - \bar{u}(x,t)| \det(g(x,t)) \\
+ &\int_{\Omega_T}\underbrace{[a_h(t-\tau_1) - a_h(t-\tau_2)]}_{\geq 0} \underbrace{|u(x,t) - \bar{u}(x,t)|}_{\geq0}\\
    &\qquad \underbrace{\left[\zeta_t(x,t) + \frac{\f_i(x,t,u(x,t)) - \f_i(x,t,\bar{u}(x,t))}{u(x,t) - \bar{u}(x,t)} \zeta_{x_i}(x,t) \right]}_{\leq 0}
		\underbrace{\det(g(x,t))}_{> 0} \\
&-\int_{\Omega_T} \sign(u(x,t)-\bar{u}(x,t)) ({\f_i}_{x_i}(x,t,u(x,t))-{\f_i}_{x_i}(x,t,\bar{u}(x,t))) \\
     &\qquad \cdot [a_h(t-\tau_1) - a_h(t-\tau_2)]\zeta(x,t) \det( g(x,t) )\\
+&\int_{\Omega_T} |u(x,t) - \bar{u}(x,t) | q(x,t) [a_h(t-\tau_1) - a_h(t-\tau_2)]\zeta(x,t) \det( g(x,t) )  \geq 0.\\
\endaligned
\end{equation}
For $\epsilon\rightarrow 0 $ we have
\begin{equation}
\aligned
 \int_0^T& [\delta_h(t-\tau_1) - \delta_h(t-\tau_2)]  \int_{S_t} |u(x,t) - \bar{u}(x,t)| \det(g(x,t)) \\
-\int_0^T &[a_h(t-\tau_1) - a_h(t-\tau_2)]  \int_{S_t} \sign(u(x,t)-\bar{u}(x,t))\\
&\qquad \qquad \qquad \cdot ({\f_i}_{x_i}(x,t,u(x,t))-{\f_i}_{x_i}(x,t,\bar{u}(x,t))) \det(g(x,t)) \\
&+\int_0^T  [a_h(t-\tau_1) - a_h(t-\tau_2)] \int_{S_t} |u(x,t) - \bar{u}(x,t) | q(x,t) \det( g(x,t) )  \geq 0.\\
\endaligned
\end{equation}
If now $h\rightarrow 0$ this implies
\begin{align}
 \mu(\tau_2)& = \int_{S_{\tau_2}} |u(x,\tau_2) - \bar{u}(x,\tau_2)| \det(g(x,\tau_2)) \\
&\leq\int_{S_{\tau_1}} |u(x,\tau_1) - \bar{u}(x,\tau_1)| \det(g(x,\tau_1)) \\
&\qquad+  \int_{\tau_1}^{\tau_2}  \int_{S_t} |u(x,t) - \bar{u}(x,t) | (|q(x,t)|+L({\f_i}_{x_i})) \det( g(x,t) )  \\
&\leq\mu(\tau_1)  + (\|q\|_{L^\infty}+L({\f_i}_{x_i})) \int_{\tau_1}^{\tau_2} \mu(t),
\end{align}
where $L({\f_i}_{x_i})$ denotes the local Lipschitz constant of ${\f_i}_{x_i}$ on $\Omega_T$ and by a Gronwall argument we conclude
\begin{equation}\label{eq:localconstraction}
 \mu(\tau_2) \leq \mu(\tau_1) \exp( (\|q\|_{L^\infty}+L({\f_i}_{x_i})) (\tau_2 - \tau_1)).
\end{equation}
Using the fact that
\begin{equation}
 \big| |u-\bar{u}| - |u_0-\bar{u}_0| \big| \leq |u_0-u| + |\bar{u}_0-\bar{u}|
\end{equation}
and that $\det(g(x,t))$ is bounded and Lipschitz in $t$ we get with Lemma  \ref{lem:initialdata}
for $\tau_1\rightarrow 0$ in $ [0,T]\backslash \mathcal E_0$ the following estimate
\begin{equation} \label{eq:localconstractioninitial}
 \mu(\tau_2) \leq  \exp( (\|q\|_{L^\infty}+L({\f_i}_{x_i})) \tau_2) \:  \int_{B_r(0)} | u_0(x) - \bar{u}_0(x) |\; \det(g(x,0))
\end{equation}
for all $\tau_2 \in  [0,T]\backslash \mathcal E_0$.
At this point we are able to show that $u=\bar{u}$ almost everywhere if $u_0=\bar{u}_0$ almost everywhere.
To this end we assume that $u_0=\bar{u}_0$ almost everywhere. Let now $p\in \Gamma_0$. We show that
we find an open set $\tilde U_p\subset \Gamma_0$ containing $p$ such that $u=\bar{u}$ almost everywhere
in $\bigcup_{t\in[0,\tilde t]} \Phi(\tilde U_p,t) \times \{t\}$ for some $\tilde t >0$. To this end let again $\psi$ be as in
\eqref{lem:localfunctions1} with $\psi(\cdot,t):B_R(0)\rightarrow \psi(B_R(0),t)$
where $B_R(0)\subset\real^n$, $\psi(B_R(0),t)\subset \Gamma(t)$ and $\psi(0,0) = p$.
Furthermore we choose $0<r<R$  and set $\tilde \psi := \psi(\cdot,0)$ and
$\tilde U := \tilde \psi(B_r(0))\subset \Gamma_0$.
As in \eqref{eq:lipschitzf} we get a local Lipschitz constant $L$ of $\f$ and have
for $\tilde t:=\frac{r}{2L}$ that
\begin{equation}
 \int_{S_{t}} |u(x,t) - \bar{u}(x,t) | \det( g(x,t) ) \leq
 C \int_{B_{r}(0)} |u_0(x) - \bar{u}_0(x) | \det( g(x,0) ) =0
\end{equation}
for all $t\in[0,\tilde t]$ and therefore $u=\bar{u}$ almost everywhere in $\bigcup_{t\in[0,\tilde t]} \Phi(\tilde U_p,t) \times \{t\}$
with $\tilde U_p:= \tilde \psi (B_{\frac{r}{2}}(0))$ since $S_{\tilde t}=B_{\frac{r}{2}}(0)$. Repeating the above argumentation for every
$p\in \Gamma_0$ we get several sets $\tilde U_p$ whose union obviously covers $\Gamma_0$. By choosing a finite
cover $\left\{ \tilde U_{p_i}, \;i=1,\ldots,M\right\} $ we get several times $\tilde t_i$ such that
$u=\bar{u}$ almost everywhere in $\bigcup_{t\in[0,\tilde t_{\min}]} \Gamma(t) \times \{t\}$ with
$\tilde t_{\min}:=\min_{i\in \{1,\ldots,M\}}\tilde t_i$. As $\tilde t_{\min}$ does not depend on time and
because of \eqref{eq:localconstraction} we can successively
conclude that $u=\bar{u}$ almost everywhere in $G_T$.
\end{proof}

The following Corollary follows immediately from the equivalence of Definition \ref{entropy12} and Definition \ref{def:kruzkoventr} together with Theorem \ref{uniqueness}.
\begin{cor}
 Let the assumptions of Theorem \ref{uniqueness} be satisfied. Then the entropy solution of Definition \ref{entropy12} is unique .
\end{cor}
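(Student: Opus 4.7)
The plan is to reduce the statement directly to Theorem \ref{uniqueness} by verifying that every entropy solution in the sense of Definition \ref{entropy12} is also a Kruzkov entropy solution in the sense of Definition \ref{def:kruzkoventr}. Once this is established, if $u_1,u_2\in L^\infty(G_T)$ are two entropy solutions of \eqref{pde} with the same initial datum $u_0$, they are both Kruzkov entropy solutions, and Theorem \ref{uniqueness} implies $u_1=u_2$ almost everywhere on $G_T$.

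The only non-trivial step is therefore the implication from Definition \ref{entropy12} to Definition \ref{def:kruzkoventr}, which is precisely the content of Remark \ref{remark-entropy}. The approach I would take is the standard Kruzkov regularization of the entropy--entropy flux pair: for a fixed $k\in\R$ and $\delta>0$ choose a smooth convex approximation $\eta_\delta\in C^2(\R)$ of $u\mapsto |u-k|$, for instance $\eta_\delta(u)=\sqrt{(u-k)^2+\delta^2}-\delta$, which satisfies $\eta_\delta''\ge 0$, $\eta_\delta(u)\to|u-k|$ uniformly on bounded sets, and $\eta_\delta'(u)\to \sgn(u-k)$ pointwise. The associated flux $q_\delta$ defined by $q_{\delta,l}(\cdot,s)=\int_{s_0}^s\eta_\delta'(\tau)f_{lu}(\cdot,\tau)\,d\tau$ then converges pointwise to $\sgn(u-k)(f(\cdot,u)-f(\cdot,k))$ (up to an $x$-dependent additive function whose surface divergence vanishes by assumption on $f$, so it does not contribute after integration by parts). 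Similarly $u\eta_\delta'(u)-\eta_\delta(u) \to u\,\sgn(u-k)-|u-k|=k\,\sgn(u-k)$.

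Substituting $(\eta_\delta,q_\delta)$ into the inequality \eqref{entropy11} of Definition \ref{entropy12}, using $u\in L^\infty$, $\nabla_\Gamma\cdot v\in L^\infty$ and the boundedness of $f_u$ from Assumption \ref{ass1} to apply dominated convergence, and passing to the limit $\delta\to 0$, yields exactly the Kruzkov inequality \eqref{eq:krukoventr} for every $k\in\R$ and every admissible nonnegative test function. This shows that $u$ is a Kruzkov entropy solution.

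The main potential obstacle is bookkeeping around the divergence-free, $x,t$-dependent term: one has to verify that the constant of integration $s_0$ appearing in the definition of $q_\delta$ and the corresponding shift in the limit flux can be absorbed without changing the inequality, which uses $\nabla_\Gamma\cdot f(\cdot,k)=0$ for every fixed $k$ (so that the difference between $q_\delta$ and the true Kruzkov flux integrates to zero against $\nabla_\Gamma\varphi$ after integration by parts on the closed surface). Once this is checked, the uniqueness conclusion is immediate from Theorem \ref{uniqueness}.
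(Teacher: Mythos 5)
Your proposal is correct and follows essentially the same route as the paper: the paper's own proof is a one-line reduction to Theorem \ref{uniqueness} via the relationship between the two solution concepts, and the implication you verify (Definition \ref{entropy12} $\Rightarrow$ Definition \ref{def:kruzkoventr}) is exactly the content of Remark \ref{remark-entropy}, which the paper likewise obtains by regularizing the Kruzkov entropy--entropy flux pair. Your extra care with the constant of integration $s_0$ and the tangentially divergence-free shift in the flux is a correct filling-in of a detail the paper delegates to \cite{Kro97}.
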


\section{Numerical algorithm}
Now we are going to derive a finite volume scheme for the initial value problem (\ref{pde}).
Up to our knowledge the first finite volume scheme on evolving surfaces for parabolic equations was proposed by Lenz et al. \cite{lenz}.
They provide a scheme for diffusion on evolving surfaces. We adapt this scheme to nonlinear scalar conservation laws on
evolving surfaces.

\subsection{Notation and Preliminaries}
Following Dziuk and Elliot \cite{dziuk1} the smooth initial surface $\Gamma_0$ is approximated by a triangulated surface
$\Gamma_{0,h}$ which consists of a set of simplices (triangles for $n=2$) such that all its vertices $\{x^0_j\}_{j=1}^N$ sit on $\Gamma_0$.
Such a set of simplices is called a triangulation
$\T^0_h$ of $\Gamma_{0,h}$ and $h$ indicates the maximal diameter of a triangle on the whole family of triangulations.
The triangulation $\T_h(t)$ and its $\Gamma(t)$ approximating surface $\Gamma_h(t)$ is defined by mapping the set
of vertices $\{x^0_j\}_{j=1}^N$ with $\Phi(\cdot,t)$ onto $\Gamma(t)$, i.e.
\begin{align*}
x_j(t):=\Phi(x^0_j,t),
\end{align*}
i.e. they lie on motion trajectories. Thus, all the triangulations $\T_h(t)$ share the same grid topology.\\
By this construction the set of simplices can be written as $\T_h(t)=\{T_j(t)|j=1,\ldots,M\}$ for $t\in[0,T]$,
where $M$ is the time independent number of simplices.

For the derivation of a finite volume scheme we introduce discrete time steps $t^k=k\tau$ where $\tau$ denotes
the time step size and $k$ the time step index. For an arbitrary time step $t^k$ we have a smooth surface
$\Gamma^k:=\Gamma(t^k)$, its approximation $\Gamma_h^k:=\Gamma_h(t^k)$ and the corresponding triangulation $\T^k_h:=\T_h(t^k)$
with simplices $T^k_j:=T_j(t^k)$.

From \cite{dziuk1} we know that for sufficiently small $h$ there is a uniquely defined lifting operator from
$\Gamma^k_h$ onto $\Gamma^k$ via the orthogonal projection $\P^k=\P(t^k)$ in direction of the surface normal $\nu$ on $\Gamma^k$.

For the comparison of quantities on $\Gamma^k$ and on $\Gamma^k_h$ we define curved simplices via the projection operator, i.e.
\begin{align*}
  \mathfrak{T}^k_j := \P^k T^k_j.
\end{align*}
Such a projection $\mathfrak{T}^k_j$ propagates during the $(k+1)$th time interval $[t^k,t^{k+1}]$. We define
\begin{align*}
  \mathfrak{T}^k_j(t) := \Phi(  (\Phi(\cdot,t^k))^{-1}(\mathfrak{T}^k_j)     ,t)
\end{align*}
and denote by $V^k_j$ the $n$-dimensional measure of $T^k_j$.

\subsection{Definition of the Finite Volume Scheme}
In order to derive a suitable finite volume scheme we integrate (\ref{pde}) over the curved simplex $\mathfrak{T}^k_j(t)$
and the time interval $[t^k,t^{k+1}]$. Applying the Leibniz formula and the Gauss theorem we obtain 
\begin{align}
 0&=\int_{t^k}^{t^{k+1}} \int_{\mathfrak{T}^k_j(t)}  \dot{u} + u\nabla_{\Gamma_t}\cdot v + \nabla_{\Gamma_t}\cdot \f(\cdot,\cdot,u) \nonumber\\
 &=\int_{t^k}^{t^{k+1}}  \frac{d}{dt} \int_{\mathfrak{T}^k_j(t)}  u
              + \int_{t^k}^{t^{k+1}}  \int_{\partial \mathfrak{T}^k_j(t)} \f(\cdot,\cdot,u) \cdot \nu_{\partial \mathfrak{T}^k_j(t)} \nonumber\\
&= \int_{\mathfrak{T}^k_j(t^{k+1})}  u - \int_{\mathfrak{T}^k_j(t^{k})}  u
              + \int_{t^k}^{t^{k+1}} \sum_{\mathfrak e \subset \partial \mathfrak{T}^k_j(t)}
                            \int_{\mathfrak e} \f(\cdot,\cdot,u) \cdot \nu_{\partial \mathfrak{T}^k_j(t)}\nonumber\\
&= \int_{\mathfrak{T}^k_j(t^{k+1})}  u - \int_{\mathfrak{T}^k_j}  u
              + \int_{t^k}^{t^{k+1}} \sum_{\mathfrak e \subset \partial \mathfrak{T}^k_j(t)}
                            \int_{\mathfrak e} \f(\cdot,\cdot,u) \cdot \nu_{\partial \mathfrak{T}^k_j(t)},\nonumber
\end{align}
where $\nu_{\partial \mathfrak{T}^k_j(t)}$ denotes the unit outer normal along $\partial \mathfrak{T}^k_j(t)$ which is tangential to
$\Gamma^k$ and $\mathfrak e$ denotes an edge of the boundary $\partial \mathfrak{T}^k_j(t)$.
We introduce the approximations
\begin{align}
 \int_{\mathfrak{T}^k_j(t^{k+1})}  u - \int_{\mathfrak{T}^k_j}  u \approx V^{k+1}_j u^{k+1}_j - V^{k}_j u^{k}_j \nonumber
\end{align}
and
\begin{align}
 \int_{t^k}^{t^{k+1}} \sum_{\mathfrak e \subset \partial \mathfrak{T}^k_j(t)} \int_{\mathfrak e} \f(\cdot,\cdot,u) \cdot \nu_{\partial \mathfrak{T}^k_j(t)}
          \approx  \tau  \sum_{ e \subset \partial {T}^k_j} g^k_{e, {T}^k_j}(u^k_j,u^k_{l(j, e)}),\nonumber
\end{align}
where $u^{k}_j \in\R$ represents the value of $u$ on $\mathfrak{T}^k_j$, $l(j,e)$ is the index of the simplex which shares the edge $ e $
with ${T}^k_j$ and the $g^k_{e,\partial {T}^k_j}$ are some numerical flux functions which depend on cell number, edge and time index.
We define the finite volume scheme by
\begin{algorithm}
\begin{align}
 u^{k+1}_j  &:= \frac{1}{V^{k+1}_j} \left( V^{k}_j u^{k}_j  \label{eqFVScheme}
       - \tau \sum_{ e \subset \partial {T}^k_j} g^k_{ e,{T}^k_j}(u^k_j,u^k_{l(j, e)}) \right),\\
u^0_j &:= \frac{1}{V^0_j}\int_{\mathfrak{T}^0_j}  u_0.  \label{eqFVSchemeIV}
\end{align}
\end{algorithm}
For implementation purposes we introduce the approximated total mass on a cell $m^k_j:=V^{k}_j u^{k}_j$.
In terms of these quantities the finite volume scheme (\ref{eqFVScheme}) and (\ref{eqFVSchemeIV}) reads
\begin{align}
 m^{k+1}_j  &:= m^{k}_j
       - \tau \sum_{ e \subset \partial {T}^k_j} g^k_{ e,{T}^k_j}\left(\frac{m^k_j}{V^{k}_j},\frac{m^k_{l(j, e)}}{V^{k}_{l(j, e)}}\right) ,\\
m^0_j &:= \int_{\mathfrak{T}^0_j}  u_0.
\end{align}

\begin{example}[Engquist-Osher Numerical Flux]\label{bspEOLF}
Using the notations from above we define for a simplex ${T}^k_j\in \mathcal T^k_h$ and an edge $e\subset \partial{T}^k_j$
\begin{align}
 c^{k}_{ e,{T}^k_j}( u)&:= \int_{e} f(\cdot,t^k, u) \cdot \nu_{\partial{T}^k_j},\\
 c^{k,+}_{ e,{T}^k_j}( u)&:=c^{k}_{ e,{T}^k_j}(0)+\int_0^{ u}\max \{{c^{k}_{ e,{T}^k_j}}^\prime(s),0\}ds,\\
 c^{k,-}_{ e,{T}^k_j}( u)&:=\int_0^{ u}\min \{{c^{k}_{ e,{T}^k_j}}^\prime(s),0\}ds.
\end{align}
Then an Engquist-Osher-flux is defined by
\begin{align}
 g^{k,\text{EO}}_{ e,{T}^k_j}(u,v):=c^{k,+}_{ e,{T}^k_j}(u)+c^{k,-}_{ e,{T}^k_j}(v).
\end{align}
\end{example}

\section{Numerical Experiments}

The finite volume scheme \eqref{eqFVScheme} and \eqref{eqFVSchemeIV} is validated by numerical experiments.
To this end we formulate a linear transport problem on a sphere whose radius decreases exponentially in time.
For this problem we know the exact solution and, thus, can compute some 
experimental orders of convergence (EOC).
As a second test problem we choose a nonlinear (Burgers-like) flux function $\f$ and state our problem
on an ellipsoid which develops a narrowness in time.

\vspace{0.2cm}
\textbf{Test Problem 1 (Linear)} We define
\begin{align*}
 r(t)&:= \exp(-t),\\
 \Gamma_t &:= \left\{ r(t) x \:|\: x \in \mathbb{S}^2 \right\},\\
\Phi(x,t) & := r(t)x,\\
 \mathbf V(r(t)x) &:= (-x^2,x^1,0)^{\text T} \quad \text{for } x=(x^1,x^2,x^3)\in \mathbb S^2,\\
f(u)&:= 2\pi u,\\
\f(x,t,u)&:= f(u) \mathbf V (r(t)x)
\end{align*}
and consider the initial value problem
\bea\label{eqConsLawTestProblem}
 \dot{u} + u\nabla_{\Gamma_t}\cdot v + \nabla_{\Gamma_t}\cdot \f(\cdot,\cdot,u) &=& 0  \quad\text{on}\quad G_T,\\
 u(x,0) &=& u_0(x)  \quad\text{on} \quad\Gamma_0. \label{eqIVTestProblem}
\eea
Since $v= \dot{r}(t) \nu $, where $\nu$ denotes the unit outer normal on $\mathbb S^2$, we have $\nabla_{\Gamma_t}\cdot v = \frac{2\dot{r}(t)}{r(t)}=-2$ (cf. \cite{dziuk1}).
As in \cite{m}, one sees that the last term on the left hand side in (\ref{eqConsLawTestProblem}) reads
in polar coordinates $(r,\varphi,\theta)$
\begin{align*}
 \frac{1}{r(t)} \frac{\partial}{\partial \varphi} f(u).
\end{align*}
Therefore (\ref{eqConsLawTestProblem}) is equivalent to
\begin{align}\label{eqConsLawTestProblemEquiv}
 \dot{u} - 2u   +   \exp(t) \frac{\partial}{\partial \varphi} f(u) = 0
              \;\text{ for } \varphi\in \mathbb T^1:=[0,2\pi] \text{ and } t>0,
 \end{align}
where $\varphi=0$ and $\varphi=2\pi$ are identified.
For given functions $\widetilde u_0: \mathbb T^1 \rightarrow \R$
and $\widehat u: (0,\pi) \rightarrow \R$ we define in polar coordinates
\begin{align*}
 \widetilde u (\varphi,t) &:=\exp(2t) \widetilde u_0(\varphi -  2\pi (\exp(t)-1)), \\
u(\varphi,\theta,t) &:= \widetilde u (\varphi,t) \widehat u(\theta),\\
u_0(\varphi,\theta)&:=\widetilde u_0 (\varphi) \widehat u(\theta).
\end{align*}
\begin{table}
\centering	
\caption{Experimental orders of convergence (EOC) for \textbf{Test Problem 1}, where $\bar h$ denotes the average diameter of the grid's elements.}
\label{tableEOC1}
\vspace{0.5cm}
\begin{tabular}{cccc}
  Elements & $\bar h$ & L1-Error & EOC \\
\hline
632&0.21605&1.86&	---\\
2,628&0.10613&1.53&	0.27\\
11,164&0.05145&1.16&	0.39\\
45,102&0.02557&0.76&	0.59\\
187,682&0.01251&0.49&	0.61\\
747,416&0.00627&0.30&	0.68\\
\end{tabular}
\end{table}

With these definitions one easily sees that $u$ is a solution of the initial value problem (\ref{eqConsLawTestProblem})-(\ref{eqIVTestProblem}).
For testing our code we define
\begin{align*}
  \widehat u (\theta) & := \sin^2(3\theta)\mathbbmss 1_{\left\{|\theta-\pi/2|<\pi/6\right\}}(\theta),\\
\widetilde u_0(\varphi)&:=\mathbbmss 1_{\left\{\varphi <\pi\right\} }(\varphi)
\end{align*}
and choose an Engquist-Osher numerical flux. 
For our computations we use surface grids that approximate the sphere $\mathbb S^2$. They consist of flat 
triangles whose nodes lay on $\mathbb S^2$.
We get the experimental orders of convergence which are
listed in Table \ref{tableEOC1}. 

\vspace{0.2cm}

\textbf{Test Problem 2 (Nonlinear)}
The results of three further experiments are illustrated in Figures \ref{pic:ellipsoid},
\ref{pic:newshock} and \ref{pic:newshockdivfree}, respectively. All three have the function
\begin{equation*}
u_0(x_1,x_2,x_3)=\cos^2(\pi\ (x_1+2))\mathbbmss 1_{\left\{x_1 < -3/2 \right\}}(x_1) 
\end{equation*}
as initial values.
For the first two experiments (see Figures \ref{pic:ellipsoid} and \ref{pic:newshock})
the flux function $f$ is constructed by taking a constant vector field which is pointing in direction
of the $x_1$-axis and projecting it on the hypersurface $\Gamma_t$. This flux function is not divergence-free.
Figure \ref{pic:ellipsoid} shows the numerical solution of a Burgers equation on an evolving
ellipsoid. You can see a shock that moves from left to right. In Figure \ref{pic:newshock} the same equation is 
considered, but due to fast change of geometry in the middle of the ellipsoid, the mass is compressed so fast that
a second shock riding on the first one is induced. Thus, this second shock is induced by the change of geometry.
For the third experiment (see Figure \ref{pic:newshockdivfree} ) the same parameters as in the second one are chosen,
only the flux function is different, which is chosen to be divergence-free.
Its construction is based on the following lemma which is a generalization of the one for the case of $\mathbb S^2$ developed by Ben-Artzi et al. \cite{BFL09} .

\begin{lemma}
Given a function $h=h(x,t,u)$ which is defined for $t\in[0,T]$ and $u\in \real$ in a neighbourhood of $\Gamma_t$,
then the flux function defined by $f(x,t,u) := \nu(x,t) \times \nabla h(x,t,u)$ is divergence-free, where  
the $x$-dependance of $f$ is assumed to $C^2$.
\end{lemma}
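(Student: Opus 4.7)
My plan is to exploit a clean index computation with the Levi-Civita symbol, using the two structural facts available: the extended Weingarten map $\mathcal H_{ij}=\ud_i\nu_j$ is symmetric, and tangential second derivatives of a scalar involve only the symmetric Hessian plus a normal-direction correction. The cross product makes this a statement about hypersurfaces in $\R^3$, i.e.\ $n=2$, which I will take as the ambient setting.

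First I would reduce to checking $\nabla_\Gamma\cdot f(\cdot,t,s)=0$ with $t$ and $s$ fixed, so the $(t,u)$ arguments of $h$ are frozen and only the $x$-divergence matters. I would then write the components of $f=\nu\times\nabla h$ as $f_i=\epsilon_{ijk}\nu_j\,\partial_k h$, where $\nu$ and $\nabla h$ are both defined in a neighbourhood of $\Gamma_t$ (for instance extending $\nu$ via the signed distance). Note also that $\nu\cdot f = \epsilon_{ijk}\nu_i\nu_j\partial_k h=0$, so $f$ is tangent to $\Gamma_t$ as required, which means the divergence formula for tangent vectors applies directly.

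Next I apply the product rule to $\ud_i f_i=\epsilon_{ijk}\bigl((\ud_i\nu_j)\,\partial_k h+\nu_j\,\ud_i\partial_k h\bigr)$. The first term contains the factor $\epsilon_{ijk}\,\ud_i\nu_j=\epsilon_{ijk}\mathcal H_{ij}$, which vanishes because $\mathcal H$ is symmetric in $(i,j)$ while $\epsilon_{ijk}$ is antisymmetric in $(i,j)$. For the second term I use the definition of the tangential derivative on the extension: $\ud_i\partial_k h=\partial_i\partial_k h-\nu_i\nu_l\partial_l\partial_k h$. Substituting,
\begin{equation*}
\epsilon_{ijk}\nu_j\,\ud_i\partial_k h
=\epsilon_{ijk}\nu_j\,\partial_i\partial_k h-\epsilon_{ijk}\nu_i\nu_j\,\nu_l\partial_l\partial_k h.
\end{equation*}
The first piece vanishes by symmetry of $\partial_i\partial_k h$ in $(i,k)$ against the antisymmetry of $\epsilon_{ijk}$ in $(i,k)$; the second vanishes by symmetry of $\nu_i\nu_j$ in $(i,j)$ against the antisymmetry of $\epsilon_{ijk}$ in $(i,j)$. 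Combining these shows $\ud_if_i=0$, i.e.\ $\nabla_\Gamma\cdot f=0$.

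I do not anticipate a real obstacle here; the whole argument is a symmetry/antisymmetry cancellation once the components are written correctly. The only subtlety worth flagging is the use of the extension $\overline h$ and the smooth normal extension $\nu$ in a tubular neighbourhood so that the Cartesian partials $\partial_i$ make sense, together with the justification that the resulting $\underline D_i f_i$ is intrinsic (which follows from Lemma~\ref{VertauschungLemma} or simply from the fact that $\underline D_i$ only depends on surface values). Under the $C^2$ hypothesis on the $x$-dependence of $f$ these manipulations are all legal, and the proof is complete.
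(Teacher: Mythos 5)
Your proof is correct, but it takes a genuinely different route from the paper. The paper argues integrally: for an arbitrary smooth portion $\gamma(t)\subset\Gamma_t$ it applies the surface divergence theorem (Lemma \ref{IntPartsLemma}), drops the curvature term because $f\cdot\nu=0$, rewrites the boundary integrand via the scalar triple product as $(\mu\times\nu)\cdot\nabla h$, and observes that this is the derivative of $h$ along the closed curve $\partial\gamma(t)$, so the boundary integral vanishes; since $\gamma(t)$ is arbitrary, $\nabla_{\Gamma_t}\cdot f=0$. You instead prove the identity pointwise by writing $f_i=\epsilon_{ijk}\nu_j\partial_k h$ and killing each term of $\ud_i f_i$ by a symmetry-versus-antisymmetry contraction: $\epsilon_{ijk}\mathcal H_{ij}=0$ by symmetry of the extended Weingarten map, and the remaining pieces vanish against the symmetric Euclidean Hessian $\partial_i\partial_k h$ and against $\nu_i\nu_j$. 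Both arguments are sound and both are intrinsically three-dimensional because of the cross product. The paper's version is shorter and needs essentially no second-derivative bookkeeping (the $C^2$ hypothesis enters only to make the pointwise divergence continuous so that the vanishing of all the integrals implies pointwise vanishing); your version is more explicit, avoids the detour through arbitrary test regions and the triple-product identity, and makes visible exactly which structural facts (symmetry of $\mathcal H_{ij}=\ud_i\nu_j$ and of the Hessian) are responsible for the cancellation. Your flagged caveats --- extension-independence of $\ud_i$ and tangency $\nu\cdot f=0$ --- are handled correctly.
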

\begin{proof}
 For fixed $t\in[0,T],\ u\in\real$ we consider a portion $\gamma(t)$ of $\Gamma_t$ with smooth boundary $\partial \gamma(t)$. Then by the divergence theorem we have
 \begin{equation*}
  \aligned
   \int_{\gamma(t)} \nabla_{\Gamma_t} \cdot f(x,t,u) \dA 
   &=\int_{\partial \gamma(t)} (\mu(x,t) \times  \nu(x,t)) \cdot \nabla h(x,t,u) \dS.
   \endaligned
 \end{equation*}
 As $\mu(x,t) \times  \nu(x,t)$ is a unit tangent vector at $\partial \gamma(t)$ the integrand is the directional derivative along
 $\partial \gamma(t)$ und thus the integral vanishes for any smooth portion $\gamma(t)$.
\end{proof}

For the third experiment a flux corresponding to $h(x,t,u)=-20 x_3 u^2$ is chosen. The pictures from 
Figure \ref{pic:newshockdivfree} show the evolution of the numerical solution.
Here, as in Figure \ref{pic:newshock} a second shock is geometrically induced and overtakes the first one.

\begin {figure}
\begin {center}
\subfigure[t=0] {
\includegraphics[width=0.48 \linewidth]{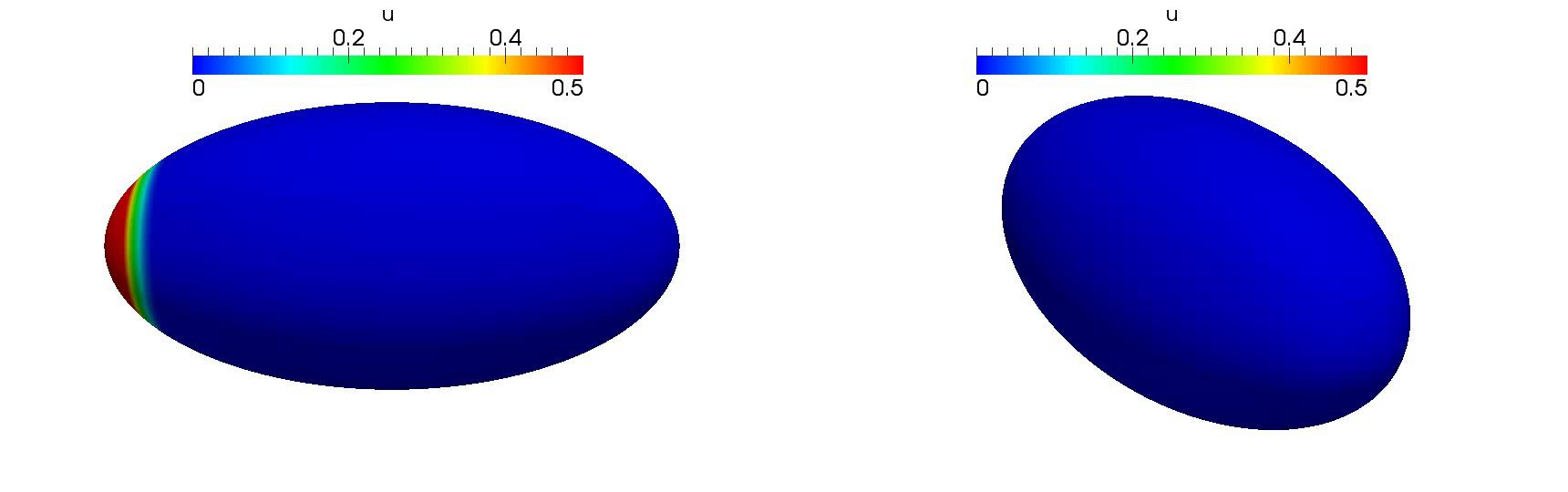}}
\subfigure[t=0.2T] {
\includegraphics[width=0.48 \linewidth]{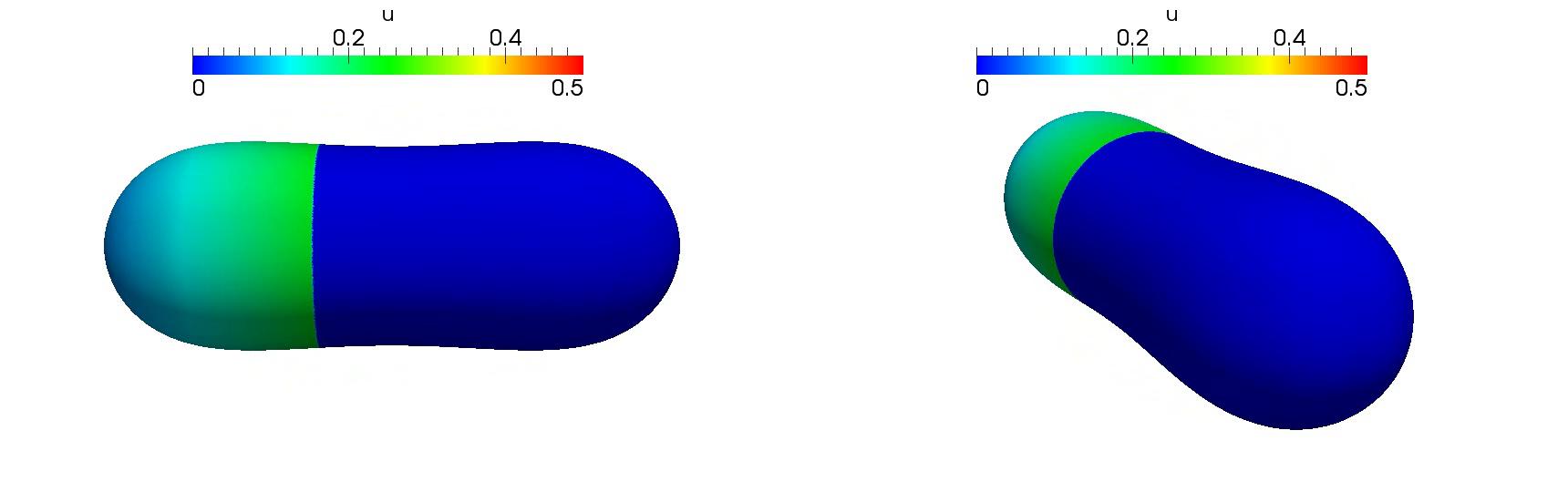}}\\
\subfigure[t=0.4T] {
\includegraphics[width=0.48 \linewidth]{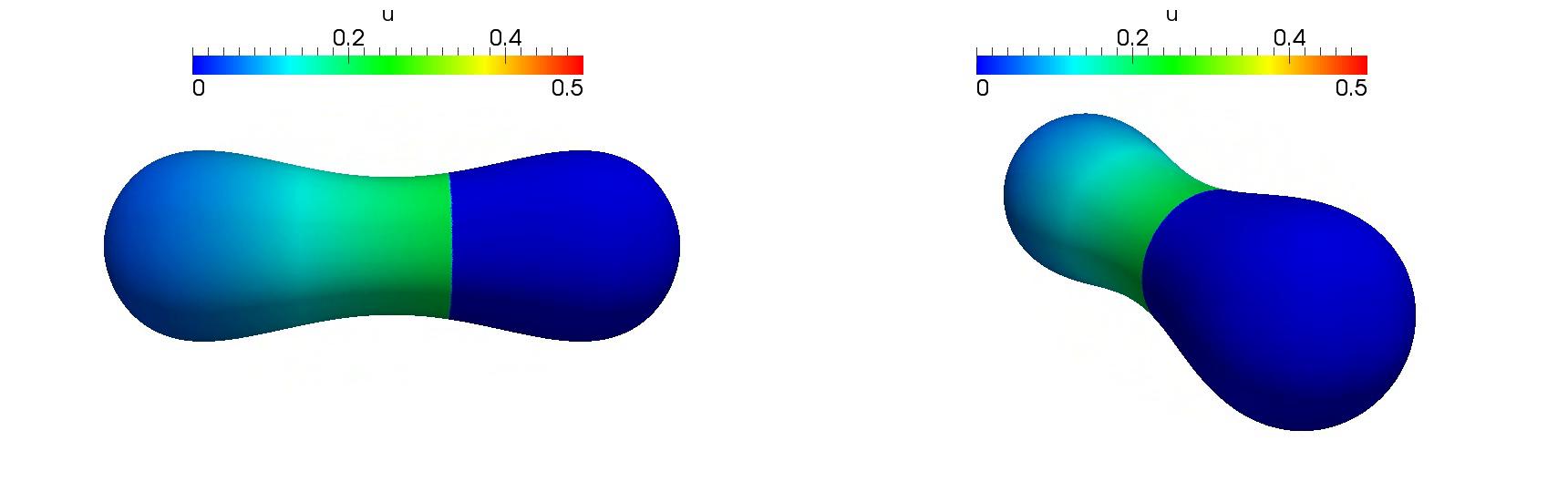}}
\subfigure[t=0.6T] {
\includegraphics[width=0.48 \linewidth]{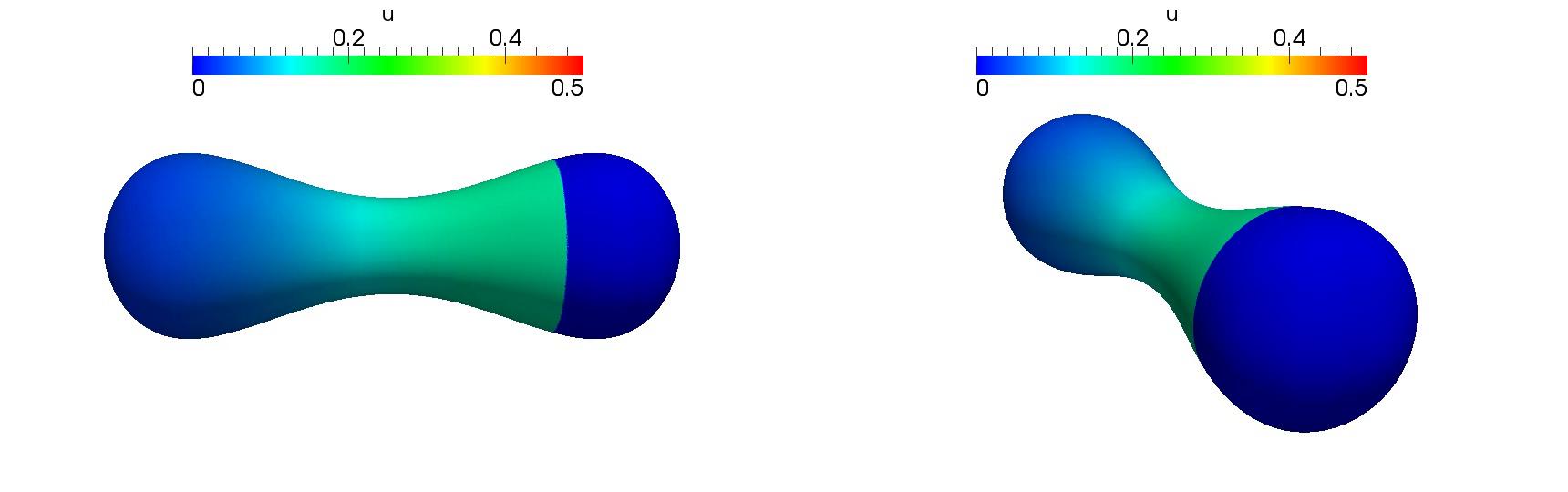}}\\
\subfigure[t=0.8T] {
\includegraphics[width=0.48 \linewidth]{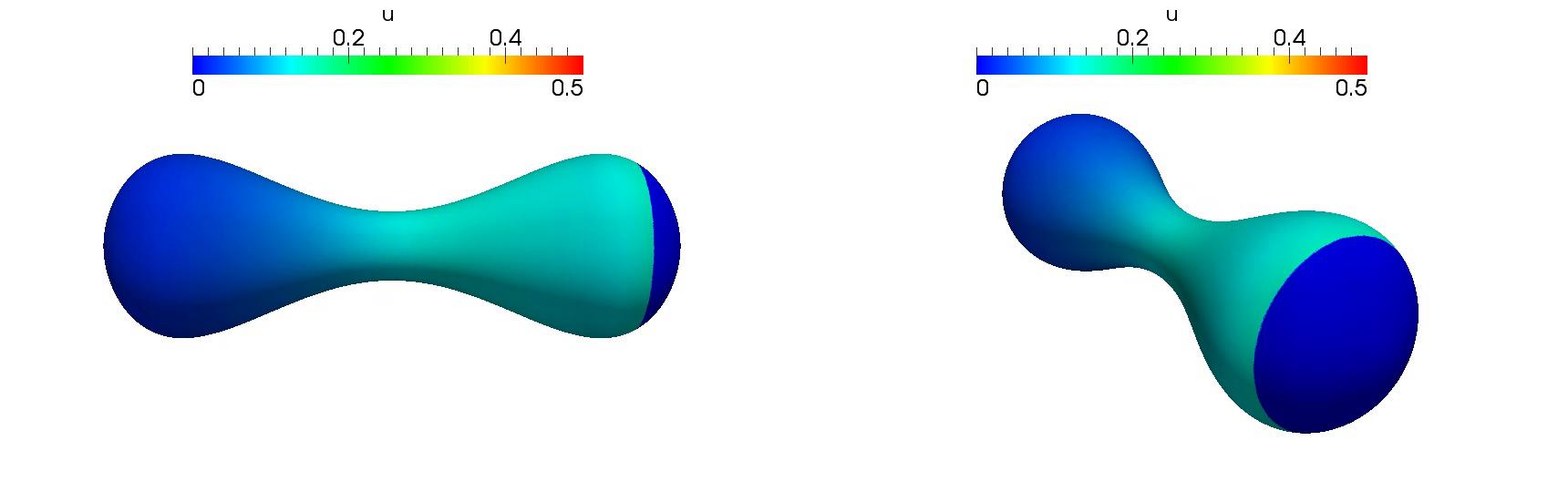}}
\subfigure[t=T] {
\includegraphics[width=0.48 \linewidth]{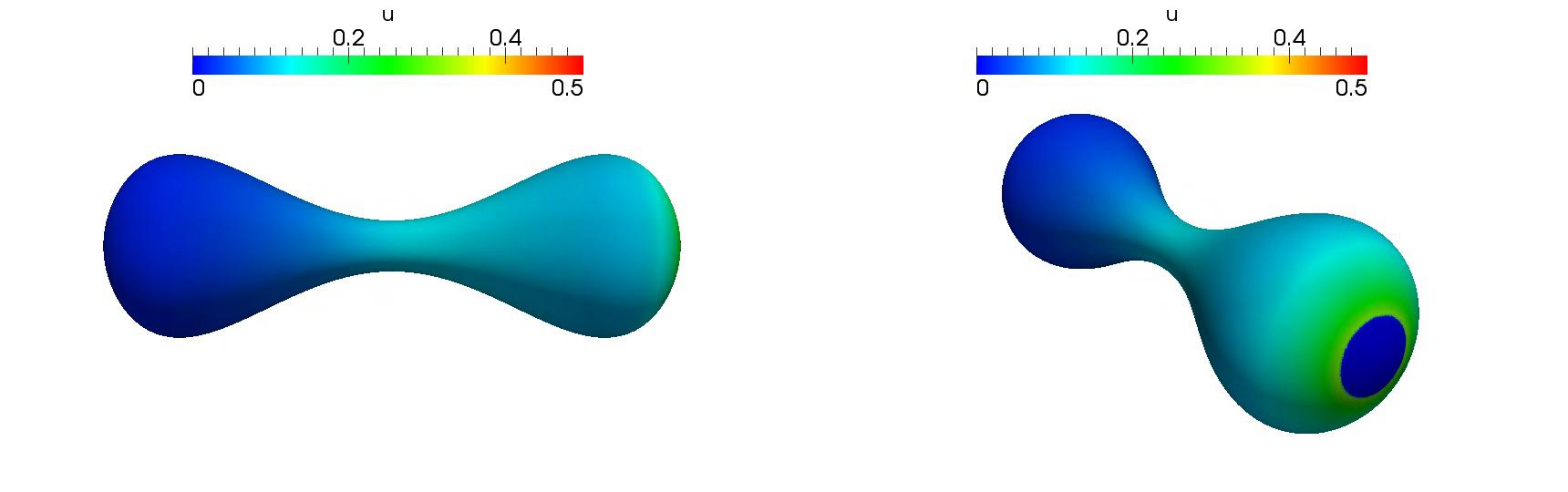}}\\
\end {center}
\caption[Shock on Ellipsoid]{Burgers like shock on an evolving ellipsoid for several time steps. Here, $T$ denotes the end time.}
\label{pic:ellipsoid}
\end {figure}

\newpage
 \,\,\,

\begin {figure}[h!]
\begin {center}
\subfigure[t=0] {
\includegraphics[width=0.48 \linewidth]{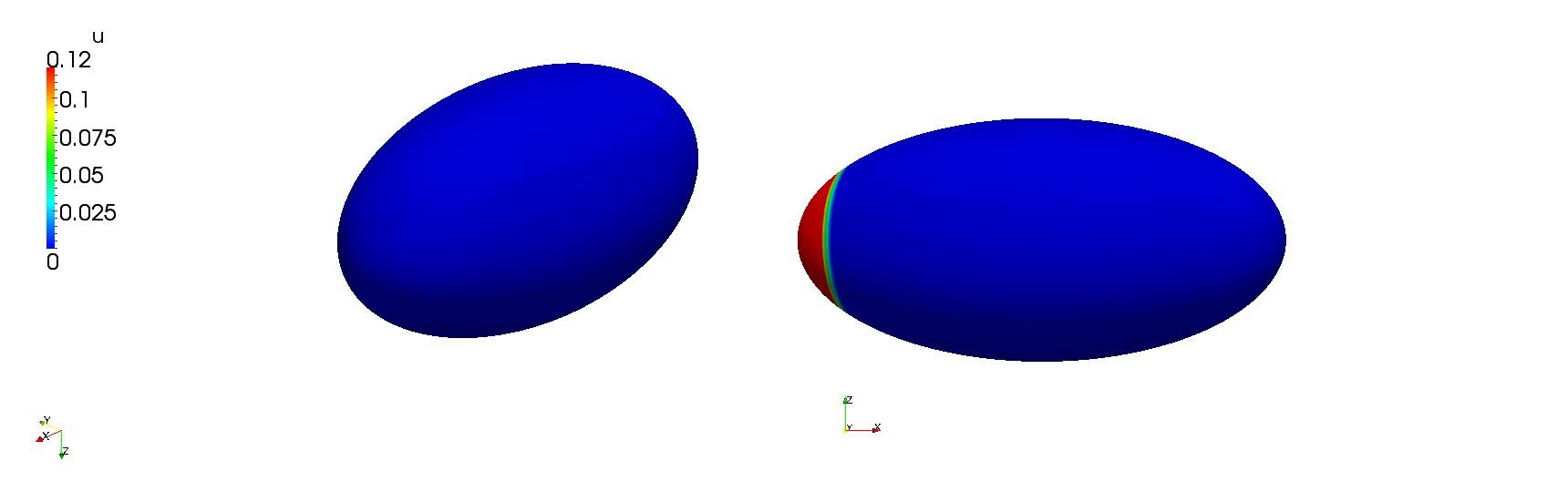}}
\subfigure[t=0.40T] {
\includegraphics[width=0.48 \linewidth]{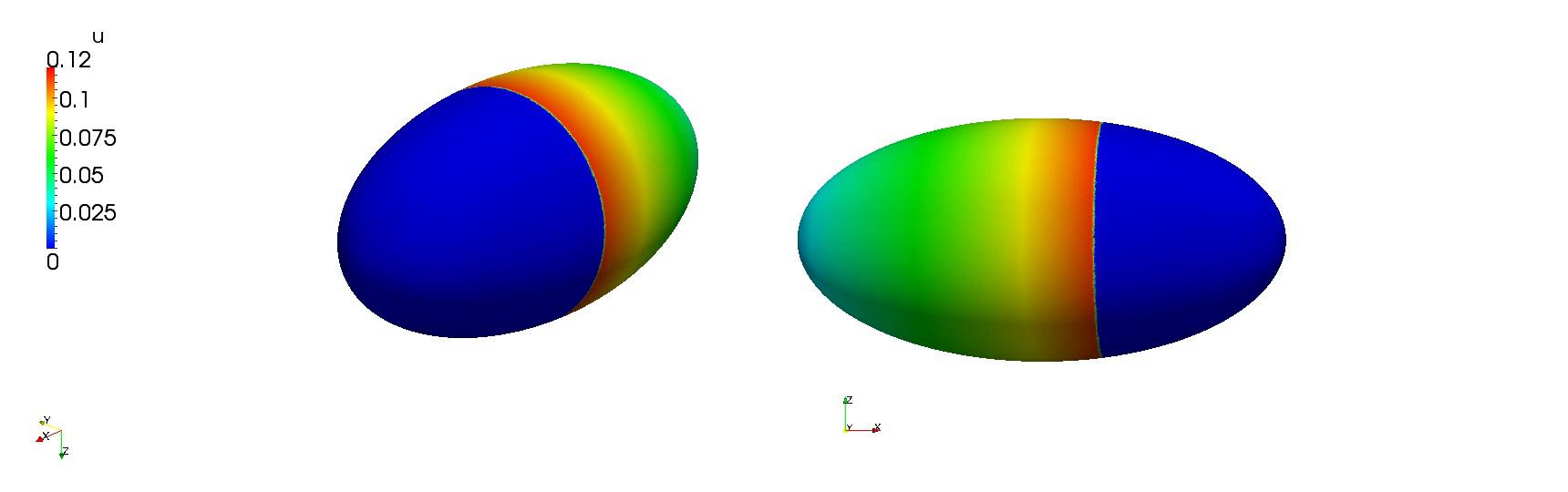}}\\
\subfigure[t=0.50T] {
\includegraphics[width=0.48 \linewidth]{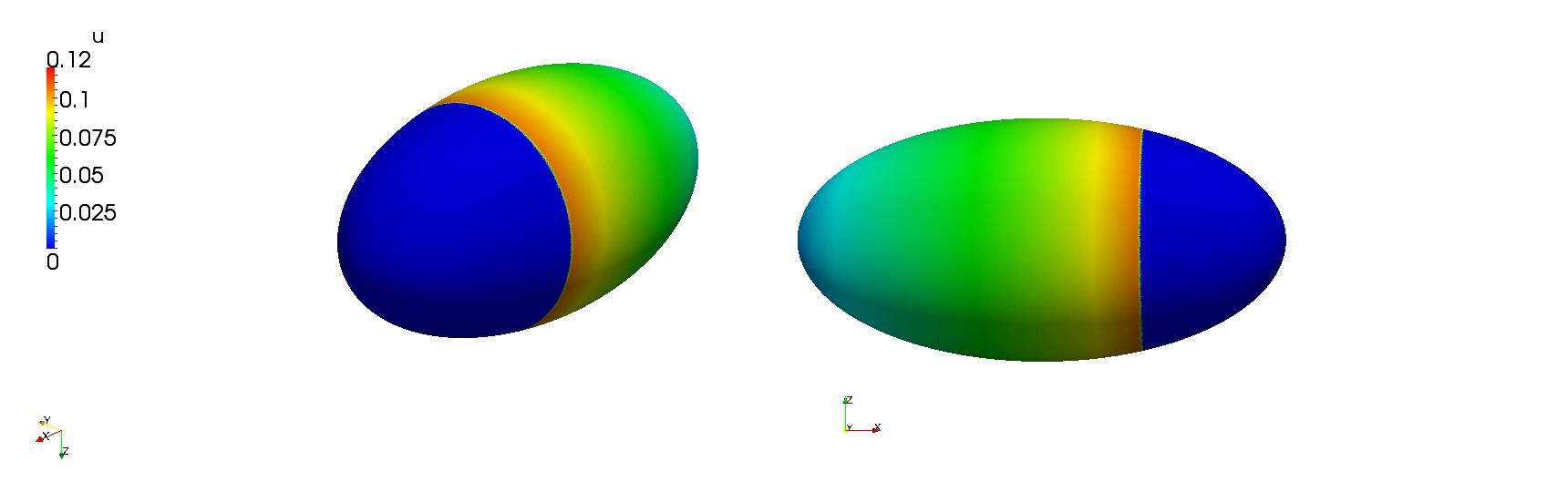}}
\subfigure[t=0.54T] {
\includegraphics[width=0.48 \linewidth]{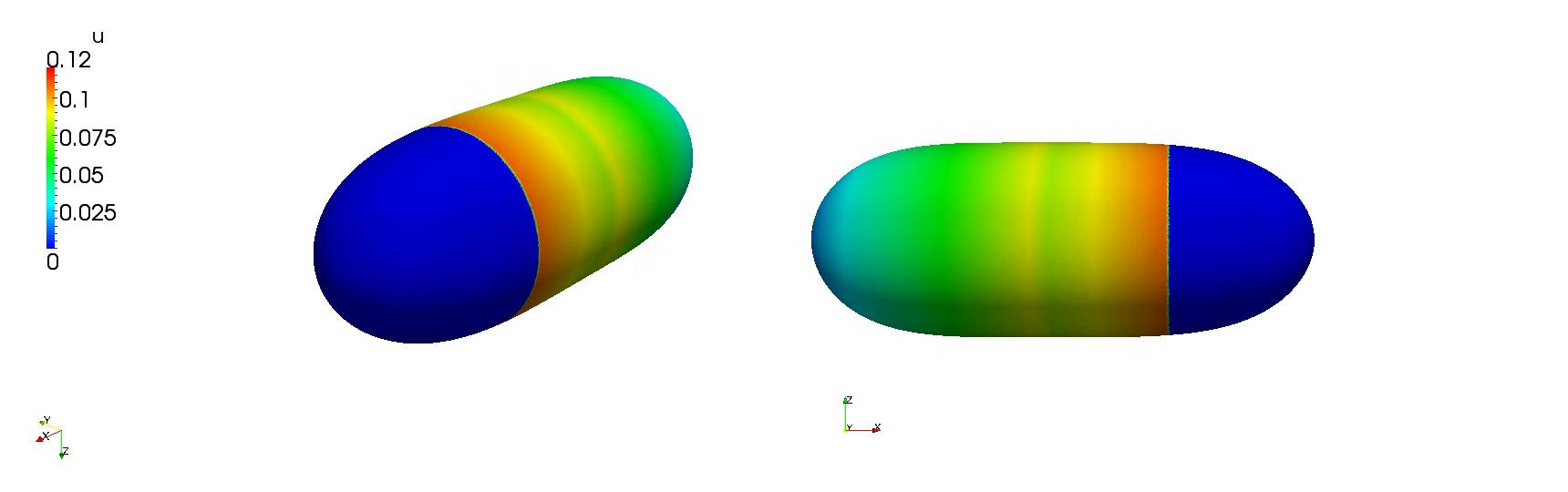}}\\
\subfigure[t=0.58T] {
\includegraphics[width=0.48 \linewidth]{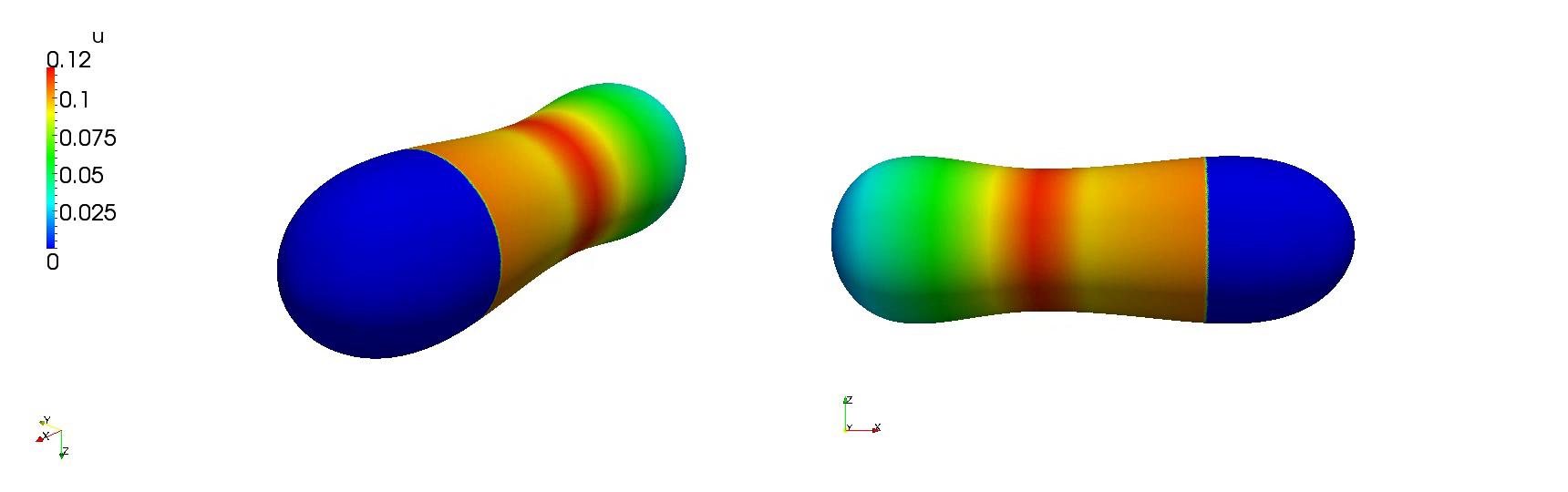}}
\subfigure[t=0.62T] {
\includegraphics[width=0.48 \linewidth]{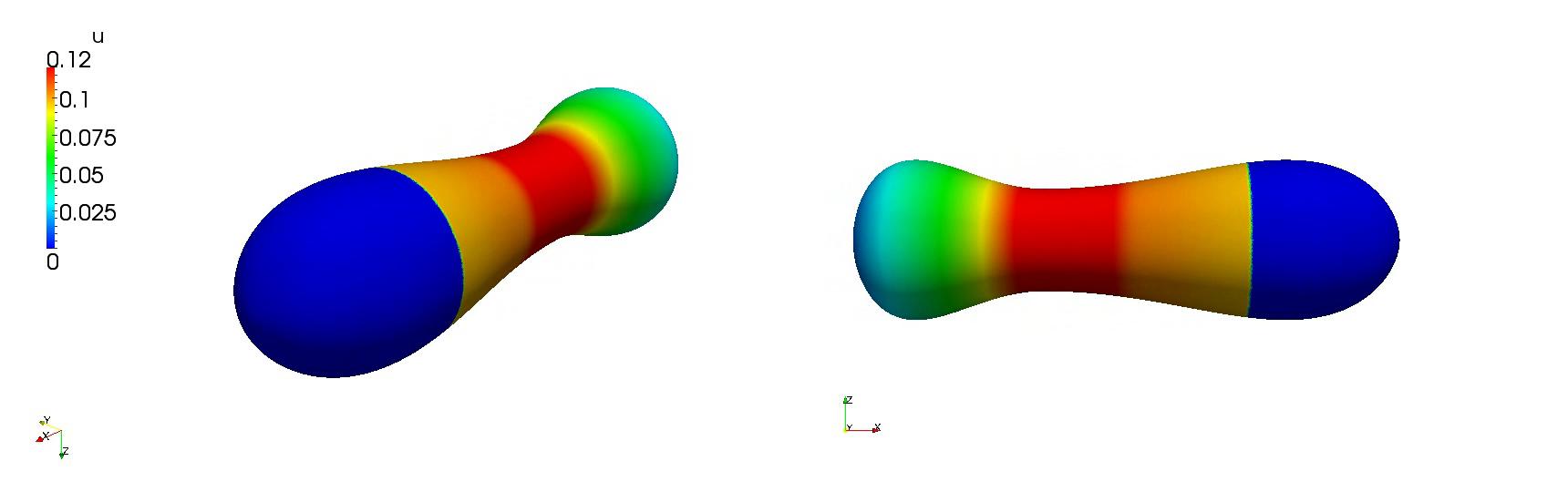}}\\
\subfigure[t=0.67T] {
\includegraphics[width=0.48 \linewidth]{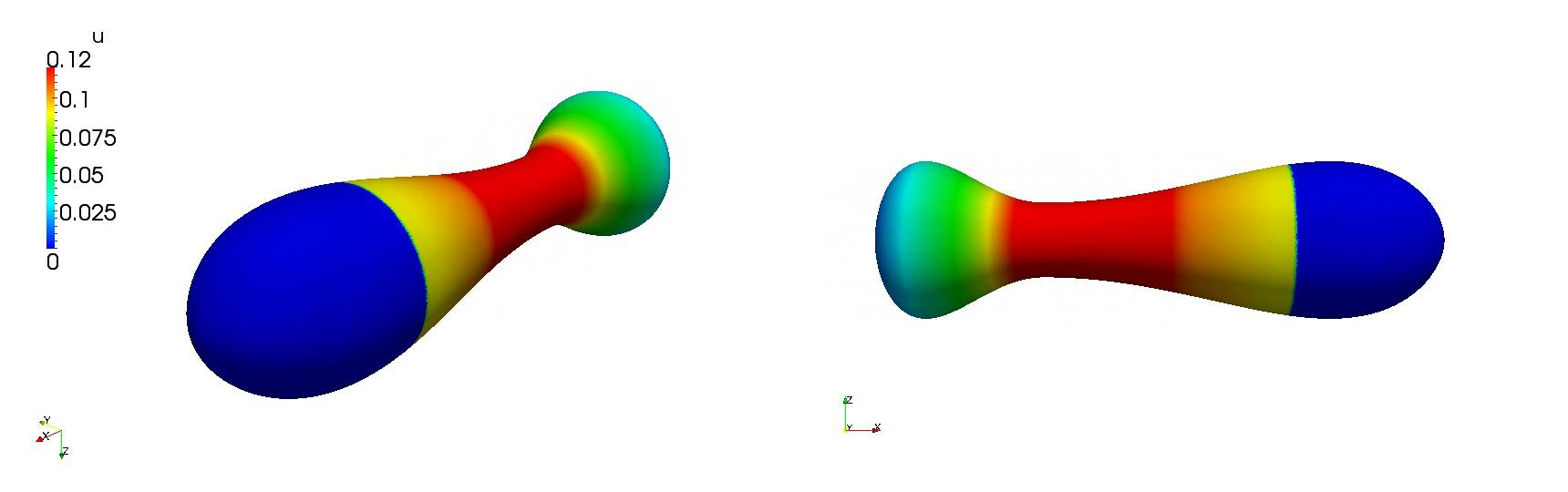}}
\subfigure[t=0.73T] {
\includegraphics[width=0.48 \linewidth]{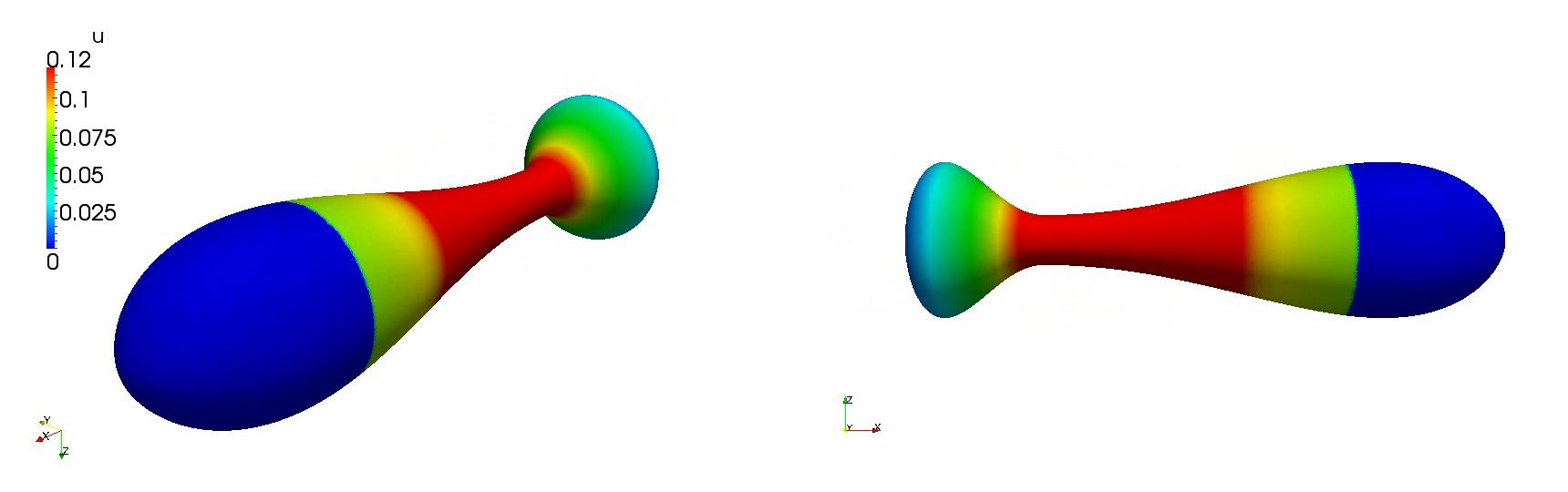}}\\
\subfigure[t=0.91T] {
\includegraphics[width=0.48 \linewidth]{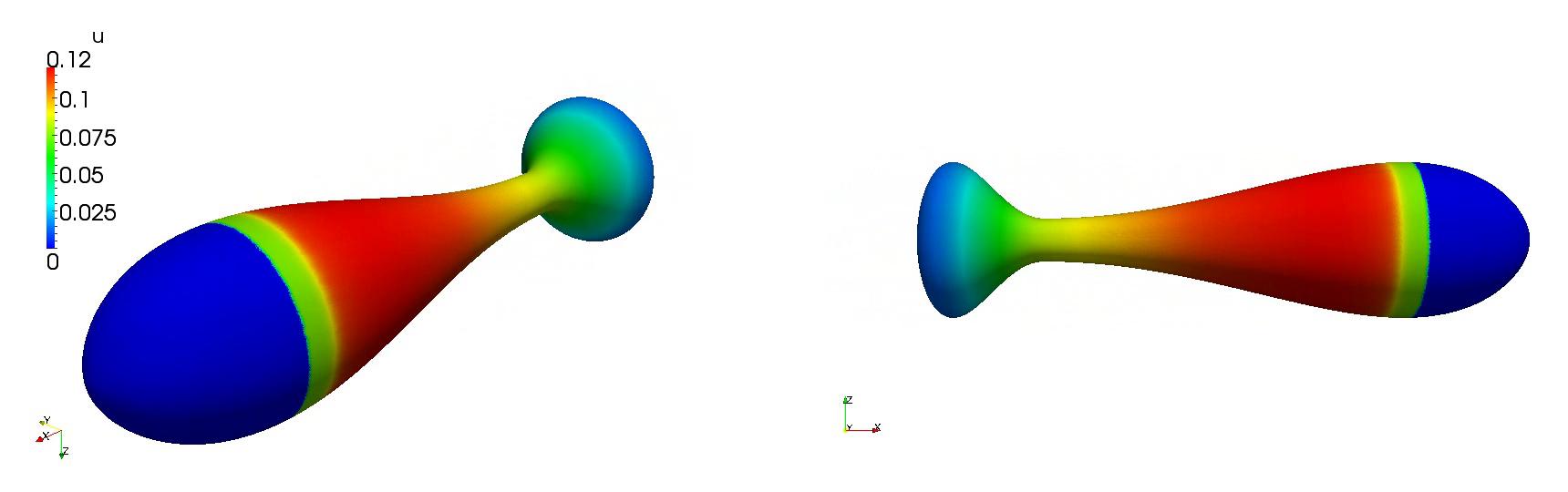}}
\subfigure[t=T] {
\includegraphics[width=0.48 \linewidth]{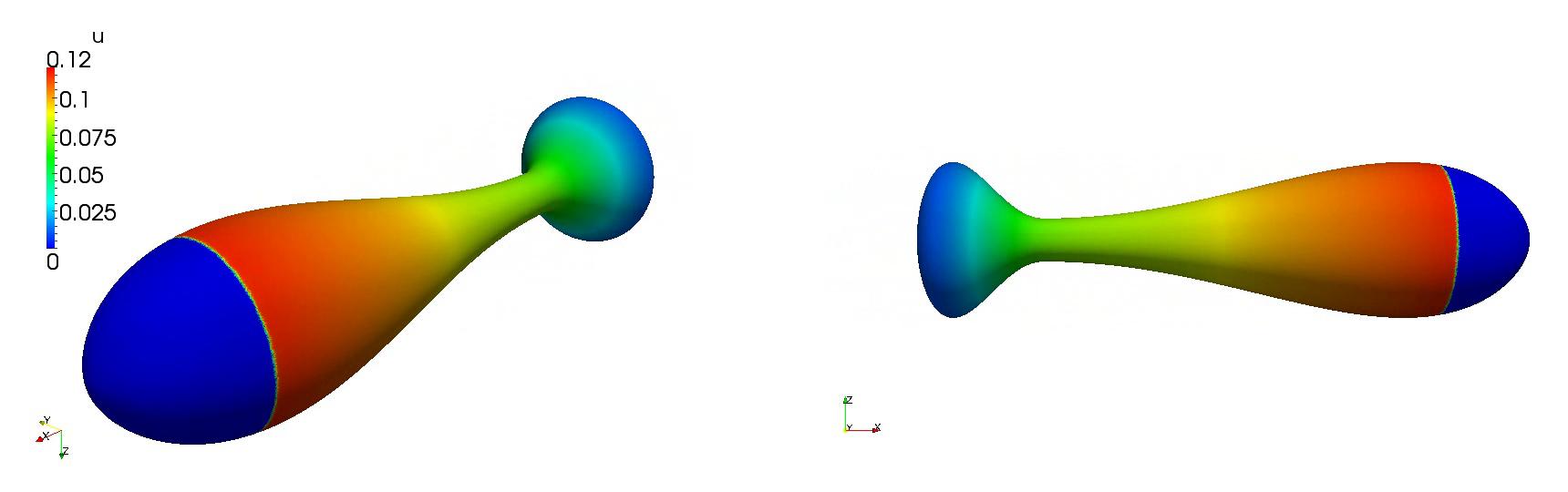}}
\end {center}
\caption[Geometrically induced shock]{We see a Burgers like shock on an evolving ellipsoid.
Caused by the deformation of the ellipsoid a second shock is produced and overtakes the first one.
Here, $T$ denotes the end time.}
\label{pic:newshock}
\end {figure}

\,\,\,

\newpage
 \,\,\,

\begin {figure}[h!]
\begin {center}
\subfigure[t=0] {
\includegraphics[width=0.48 \linewidth]{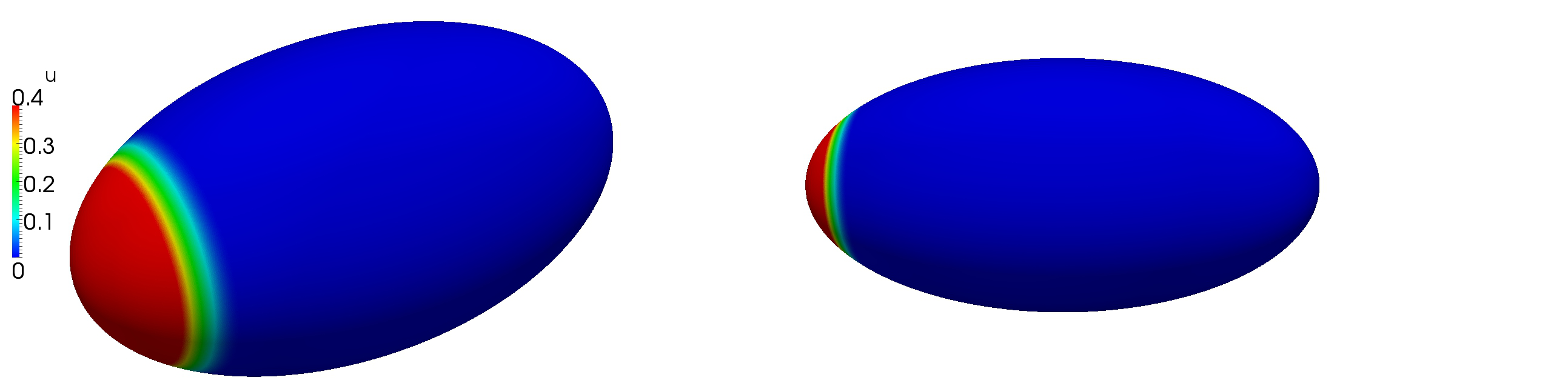}}
\subfigure[t=0.02T] {
\includegraphics[width=0.48 \linewidth]{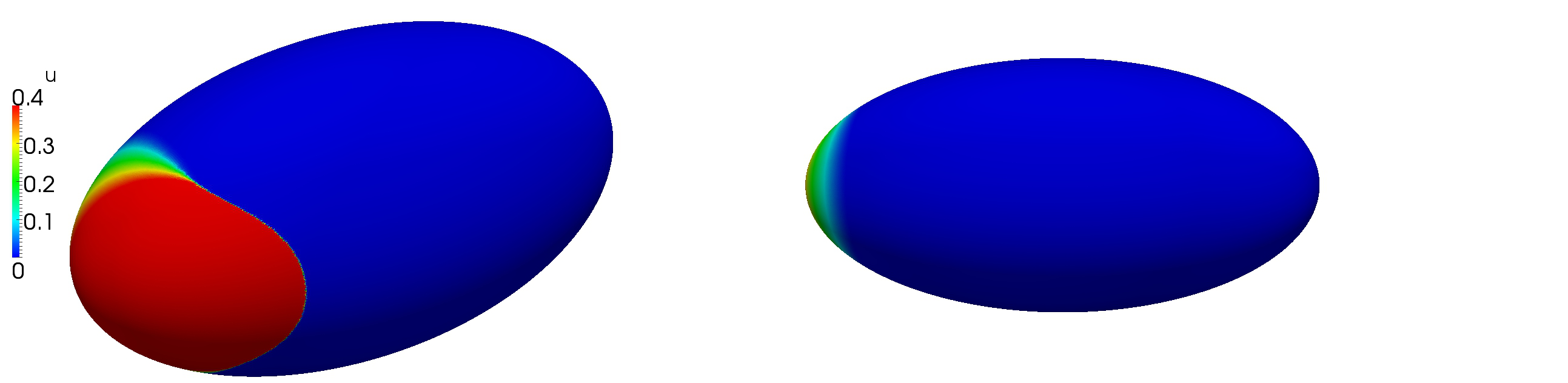}}\\
\subfigure[t=0.22T] {
\includegraphics[width=0.48 \linewidth]{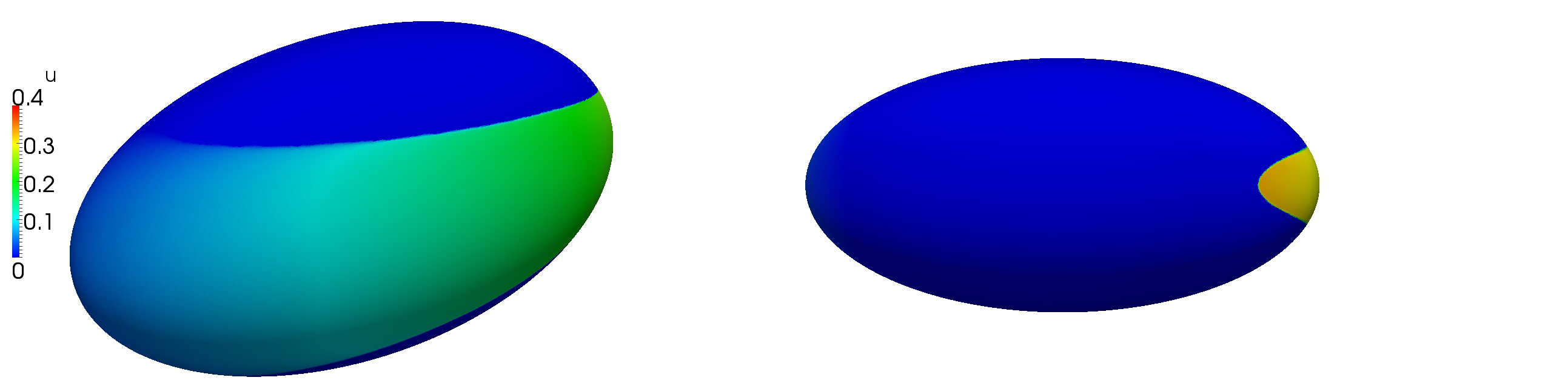}}
\subfigure[t=0.26T] {
\includegraphics[width=0.48 \linewidth]{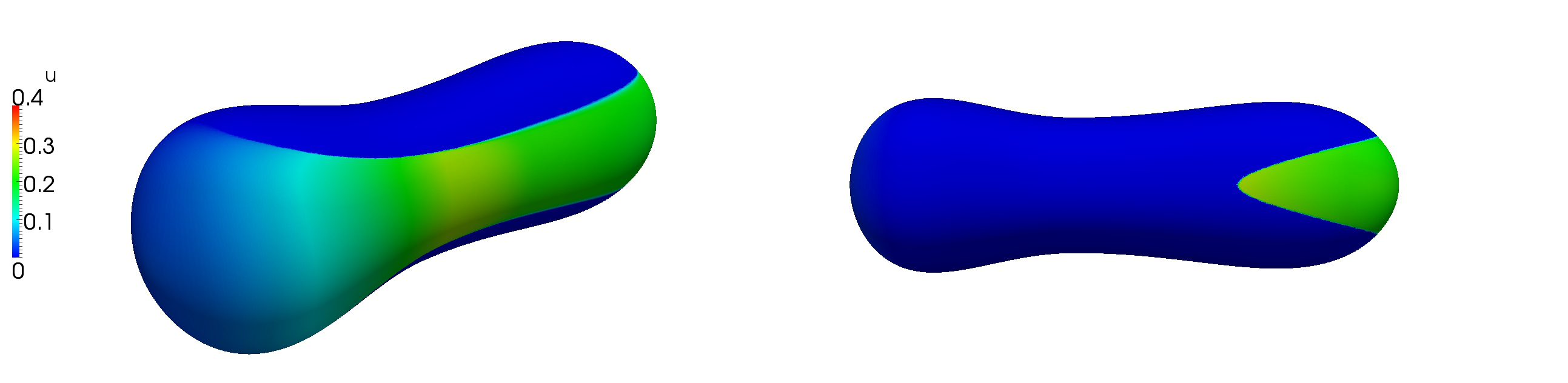}}\\
\subfigure[t=0.28T] {
\includegraphics[width=0.48 \linewidth]{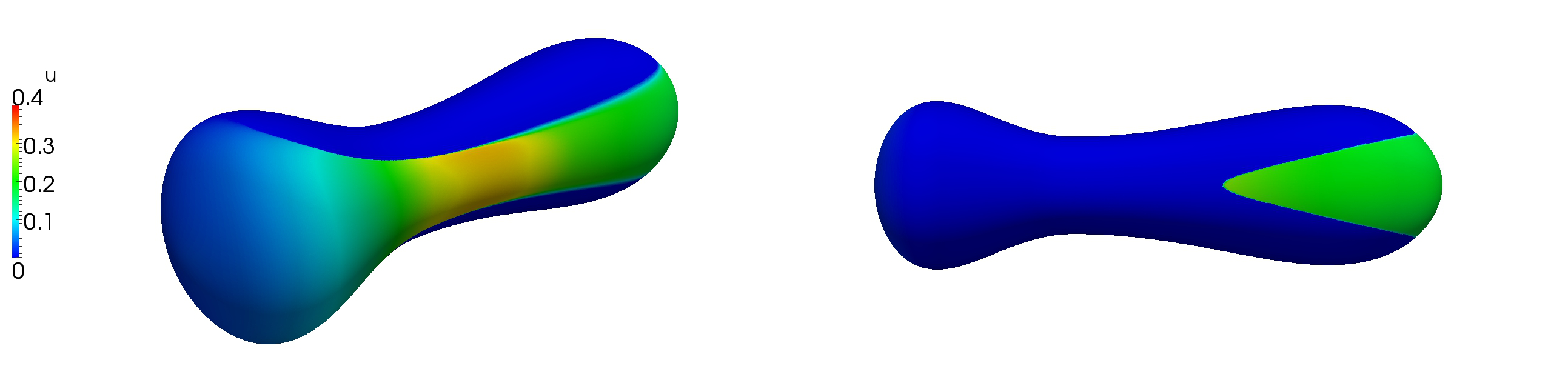}}
\subfigure[t=0.29T] {
\includegraphics[width=0.48 \linewidth]{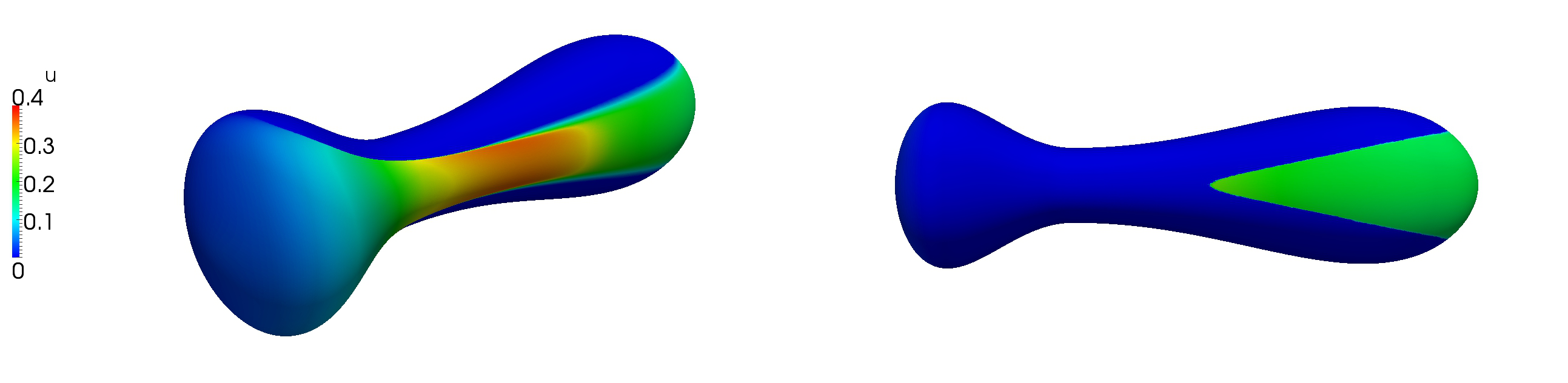}}\\
\subfigure[t=0.33T] {
\includegraphics[width=0.48 \linewidth]{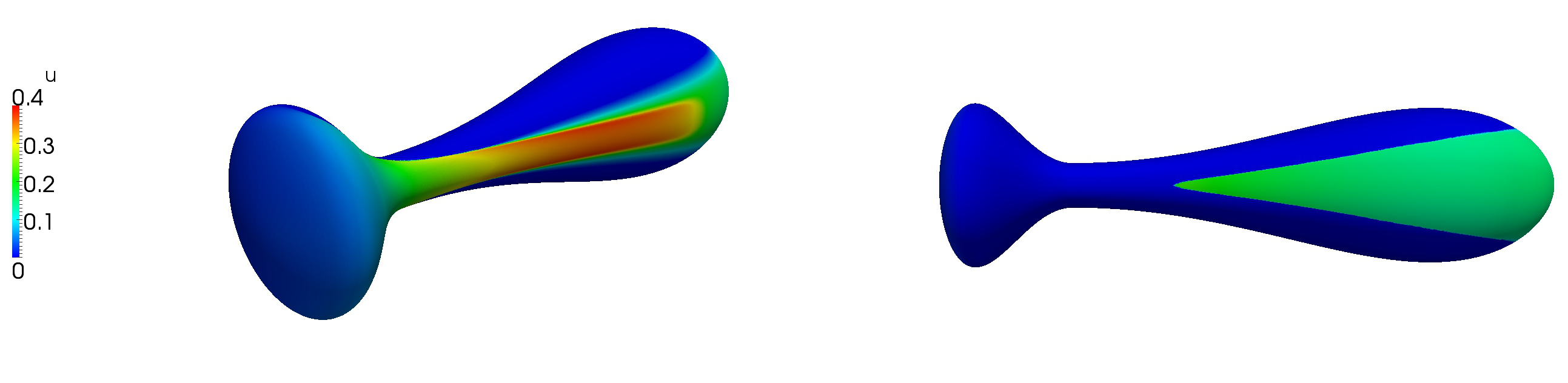}}
\subfigure[t=0.40T] {
\includegraphics[width=0.48 \linewidth]{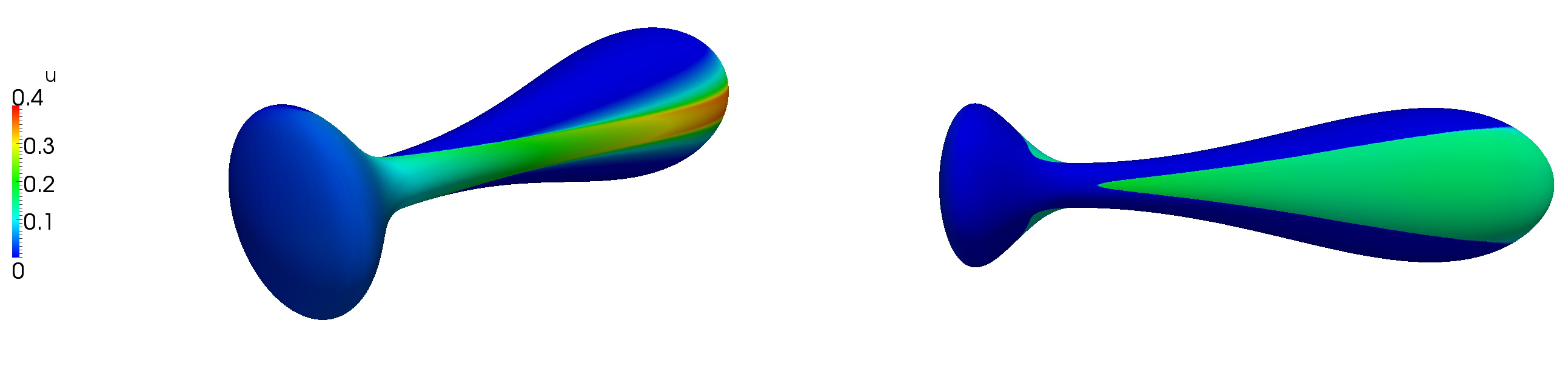}}\\
\subfigure[t=0.58T] {
\includegraphics[width=0.48 \linewidth]{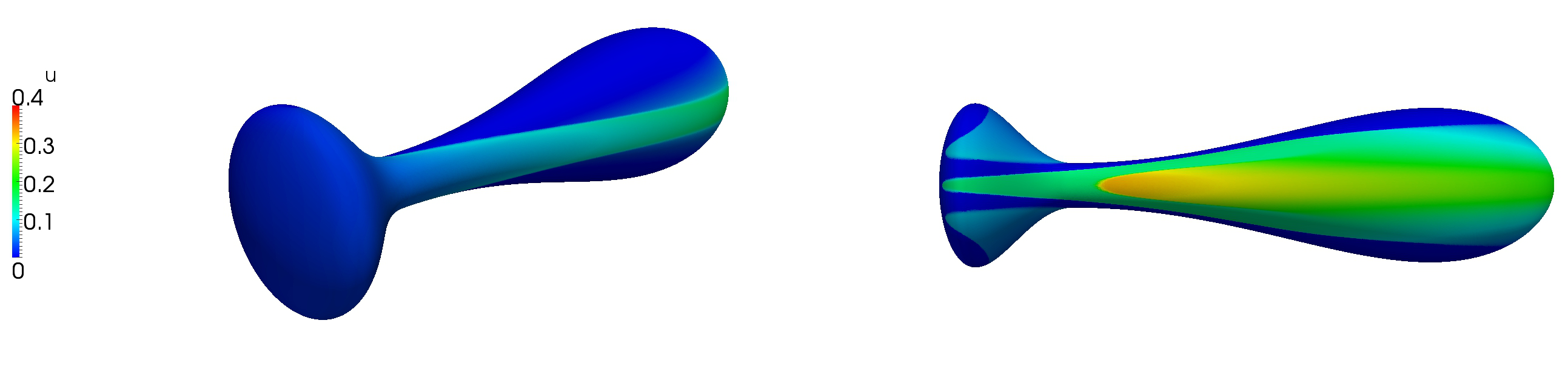}}
\subfigure[t=0.64T] {
\includegraphics[width=0.48 \linewidth]{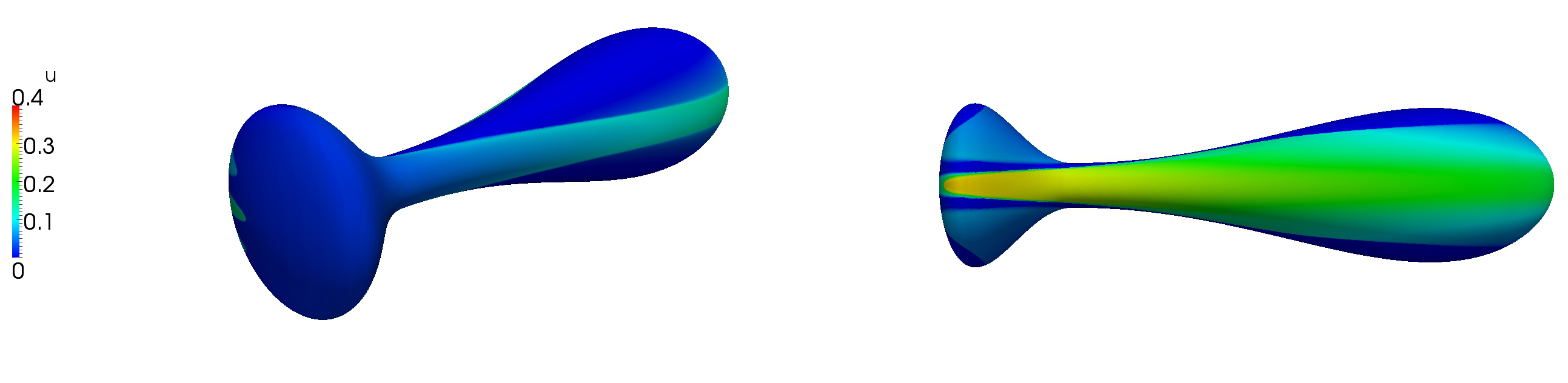}}\\
\subfigure[t=0.79T] {
\includegraphics[width=0.48 \linewidth]{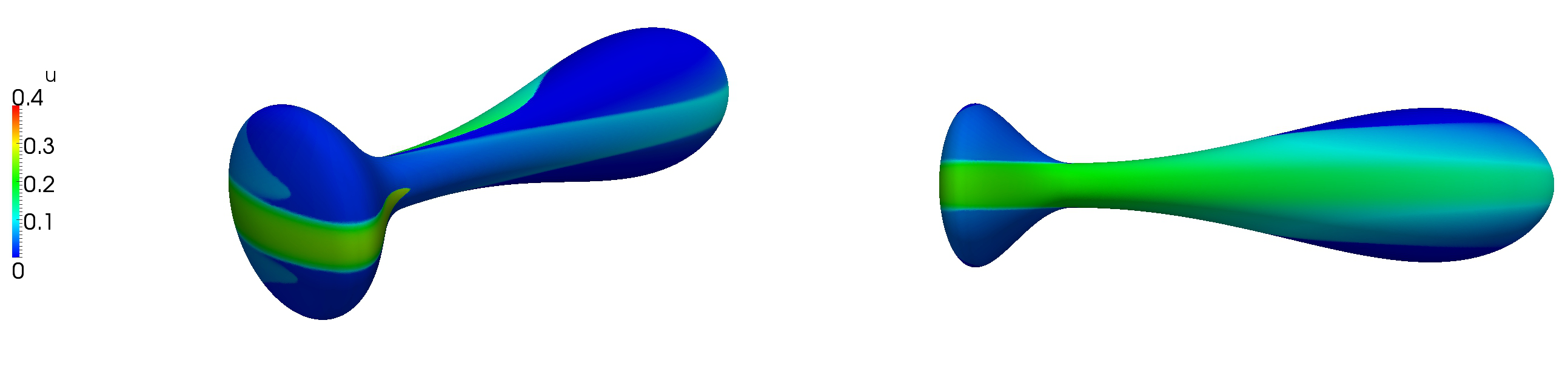}}
\subfigure[t=T] {
\includegraphics[width=0.48 \linewidth]{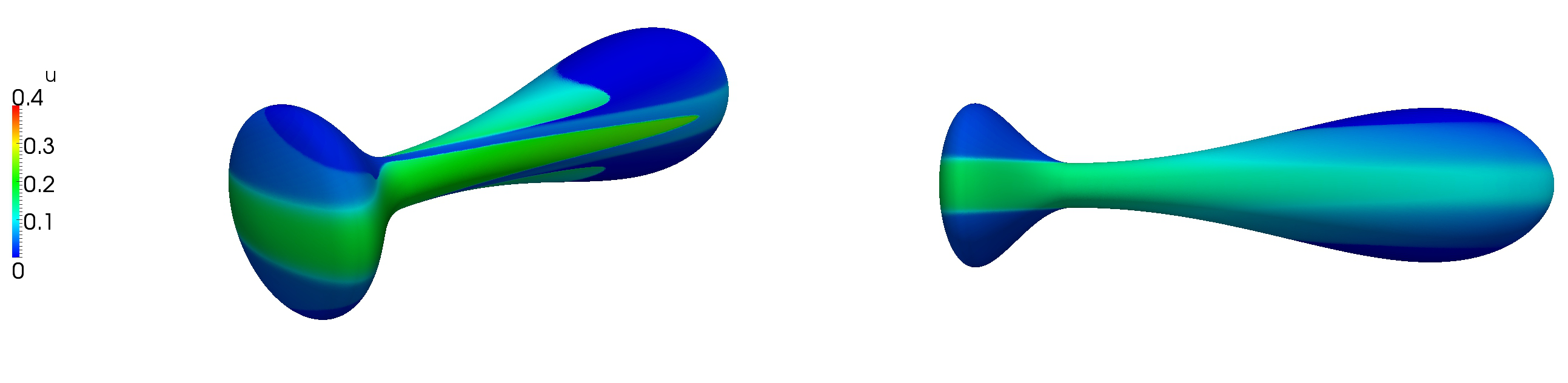}}\\
\end {center}
\caption[Geometrically induced shock with divergence-free flux function]{As in Figure \ref{pic:newshock} a second shock is
geometrically induced and overtakes the first one. Here, the flux function $f$ is divrgence-free. $T$ denotes the end time.}
\label{pic:newshockdivfree}
\end {figure}

\,\,\,

\section{Appendix}
Proof of Lemma \ref{dotgradLemma}
\begin{proof}
It is sufficient to prove this relation locally. For this we
use a parametrization $X=X(\theta,t)$, $\theta\in \mathcal{O}$
over some open set $\mathcal{O}\subset\R^n$.
We write $U(\theta,t)=u(X(\theta,t),t)$ and $V(\theta,t)=v(X(\theta,t),t)$.
We use the notation $g_{ij}=X_{\theta_i}\cdot X_{\theta_j}$. The uppering
of the indices have to be understood in the usual sense as inverting the
matrix: $(g^{ij})_{i,j=1,\dots,n}=(g_{ij})_{i,j=1,\dots,n}^{-1}$.

Then, by definition of the material derivative (\ref{MatDer}) we have
\begin{eqnarray*}
\lefteqn{
\left(\ud_l u\right){\dot{~}}\circ X=\frac{\partial}{\partial t}\left(\ud_lu\circ X\right)
=\frac{\partial}{\partial t}\left(g^{ij}U_{\theta_j}X^l_{\theta_i}\right)  }\\
&&=\frac{\partial g^{ij}}{\partial t} U_{\theta_j}X^l_{\theta_i}+
g^{ij}U_{t\theta_j}X^l_{\theta_i}+g^{ij}U_{\theta_j}X^l_{t\theta_i}
\end{eqnarray*}
We have that $U_t=\dot u\circ X$ and $X_t=V=v\circ X$. With the relation
\begin{equation*}
g^{ij}_t=-g^{ik}g^{jm}\left(V_{\theta_k}\cdot X_{\theta_m}+X_{\theta_k}\cdot V_{\theta_m}\right)
\end{equation*}
we get
\begin{eqnarray*}
\lefteqn{ \left(\ud_l u\right)^{\dot{~}}\circ X=
-g^{ik}g^{jm}\left(V^r_{\theta_k}X^r_{\theta_m}+X^s_{\theta_k}V^s_{\theta_m}\right)
U_{\theta_j}X^l_{\theta_i} + g^{ij}U_{t\theta_j}X^l_{\theta_i}
+g^{ij}U_{\theta_j}X^l_{t\theta_i}}\\
&&=-g^{ik}V^r_{\theta_k}X^l_{\theta_i}g^{jm}X^r_{\theta_m}U_{\theta_j}
-g^{ik}X^s_{\theta_k}X^l_{\theta_i}g^{jm}V^s_{\theta_m}U_{\theta_j}
+g^{ij}U_{t\theta_j}X^l_{\theta_i}+g^{ij}U_{\theta_j}V^l_{\theta_i}\\
&&=-\ud_l u v_r\circ X \ud_r u\circ X - \ud_s x_l\circ X \nabla_\Gamma v_s\cdot \nabla_\Gamma u \circ X
+\ud_l \dot u \circ X + \nabla_\Gamma u\cdot \nabla_\Gamma v_l\circ X.
\end{eqnarray*}
Because of $\ud_s x_l=P_{sl}=\delta_{sl}-\nu_s\nu_l$ we finally arrive at
\begin{eqnarray*}
\lefteqn{
\left(\ud_l u\right){\dot{~}}
=\ud_l \dot u -\ud_l v_r \ud_r u - P_{sl}\ud_r v_s \ud_r u + \ud_r u \ud_r v_l }\\
&&= \ud_l \dot u - \ud_r u\left( \ud_l v_r + P_{sl}\ud_r v_s -\ud_r v_l\right).
\end{eqnarray*}
The Lemma is proved.
\end{proof}
\newpage

\end{document}